





\documentclass[sn-mathphys]{sn-jnl}



\jyear{2023}
\theoremstyle{thmstyleone}%
\newtheorem{theorem}{Theorem}
%

\theoremstyle{thmstyletwo}%
\newtheorem{remark}{Remark}%

\theoremstyle{thmstylethree}%
\newtheorem{definition}{Definition}%
\newtheorem{lemma}{Lemma}

\raggedbottom

\usepackage[shortlabels]{enumitem}

\begin{document}

\title[Boundary Recovery of Anisotropic Electromagnetic Parameters]{Boundary Recovery of Anisotropic Electromagnetic Parameters for the Time Harmonic Maxwell's Equations}


\author[1]{\fnm{Sean} \sur{Holman}}\email{sean.holman@manchester.ac.uk}

\author[2]{\fnm{Vasiliki} \sur{Torega}}\email{vasiliki.torega@manchester.ac.uk}

\affil[1]{\orgdiv{Department of Mathematics}, \orgname{University of Manchester}, \orgaddress{\street{Oxford Road}, \city{Manchester}, \postcode{M13 9PL}, \state{Greater Manchester}, \country{UK}}}

\affil[2]{\orgdiv{Department of Mathematics}, \orgname{University of Manchester}, \orgaddress{\street{Oxford Road}, \city{Manchester}, \postcode{M13 9PL}, \state{Greater Manchester}, \country{UK}}}


\abstract{This work concerns inverse boundary value problems for the time-harmonic Maxwell's equations on differential $1-$forms. We formulate the boundary value problem on a $3-$dimensional compact and simply connected Riemannian manifold $M$ with boundary $\partial M$ endowed with a Riemannian metric $g$. Assuming that the electric permittivity $\varepsilon$ and magnetic permeability $\mu$ are real-valued anisotropic (i.e $(1,1)-$ tensors), we aim to determine certain metrics induced by these parameters, denoted by $\hat{\varepsilon}$ and $\hat{\mu}$ at $\partial M$. We show that the knowledge of the impedance and admittance maps determines the tangential entries of $\hat{\varepsilon}$ and $\hat{\mu}$ at $\partial M$ in their boundary normal coordinates, although the background volume form cannot be determined in such coordinates due to a non-uniqueness occuring from diffeomorphisms that fix the boundary. Then, we prove that in some cases, we can also recover the normal components of $\hat{\mu}$ up to a conformal multiple at $\partial M$ in boundary normal coordinates for $\hat{\varepsilon}$. Last, we build an inductive proof to show that if $\hat{\varepsilon}$ and $\hat{\mu}$ are determined at $\partial M$ in boundary normal coordinates for $\hat{\varepsilon}$, then the same follows for their normal derivatives of all orders at $\partial M$.}

\keywords{Maxwell's equations, Inverse Problems, Anisotropic, Electromagnetic Parameters, Boundary Normal Coordinates, Pseudodifferential Operators}


\maketitle

\section{Introduction}\label{sec1}
In this work, we consider a $3-$dimensional compact and simply connected Riemannian manifold $M$,  with smooth boundary $\partial M$, equipped with a Riemannian metric $g$. In the case of $M$ $\subset$ $\mathbb{R}^{3}$, $g$ could be the Euclidean metric. The coordinate invariant formulation of time harmonic Maxwell's equations, in the absence of currents or sources, in $M$ is provided by
\begin{equation}\label{Maxg}
\begin{split}
\ast_{g}\mathrm{d}H= -i \omega \varepsilon E, \quad \delta_{g} (\varepsilon E)= 0,\\
\ast_{g}\mathrm{d}E= i \omega \mu H, \quad \delta_{g} (\mu H)= 0,
\end{split}
\end{equation}
where $\ast_{g}$ is the Hodge star operator with respect to the metric $g$ and $\mathrm{d}$ stands for the exterior derivative. The operator $\delta_{g}$ denotes the divergence operator with respect to the  metric $g$.  The electric and magnetic fields $E$ and $H$, respectively, are differential $1-$forms. The electric permittivity $\varepsilon$ and magnetic permeability $\mu$ are assumed to be real-valued $(1,1)-$tensor fields which are symmetric and positive definite with respect to the metric $g$. The angular frequency is represented by $\omega$ and is a real number.

We consider either of the Dirichlet boundary conditions
\begin{equation}\label{pullbackBCs}
\iota^{\ast}H=F \quad \mbox{or} \quad  \iota^{\ast}E=G,
\end{equation}
where $\iota^{\ast}$ stands for the pullback at $\partial M$. Given that the Dirichlet problem composed of \eqref{Maxg} and either boundary condition in \eqref{pullbackBCs} is well-posed, the boundary mappings $\Lambda_{\varepsilon}$ and $\Lambda_{\mu}$, called the impedance and admittance maps respectively are well defined and are provided by
\begin{equation*}
\Lambda_{\varepsilon}(\iota^{\ast}H)=\iota^{\ast}E \quad \mbox{and} \quad \Lambda_{\mu}(\iota^{\ast}E)=\iota^{\ast}H.
\end{equation*}
Note that $\Lambda_{\mu}$ is the inverse of $\Lambda_{\varepsilon}$.
Assuming the knowledge of $\Lambda_{\varepsilon}$ and $\Lambda_{\mu}$ we aim to recover the metrics induced by $\varepsilon$ and $\mu$ on the cotangent bundle of $M$, denoted by $\varepsilon^{\sharp}$ and $\mu^{\sharp}$, as well as their normal derivatives at $\partial M$. 

\subsection{Literature Review}

The inverse problem of recovering a Riemannian metric using boundary data has been widely studied for the wave equation. The boundary measurements in this case are realised by the Dirichlet to Neumann ($DtN$) map. A natural obstruction to the uniqueness of the reconstruction, also mentioned as ``cloaking", is encountered in this type of inverse problems and is caused by diffeomorphisms that fix the boundary \cite{invisibility}, \cite{2danisotropic}.
The article \cite{lassas2014inverse} studies the inverse problem of reconstructing a Riemannian manifold through the $DtN$ map for the wave equation.  The Dirichlet boundary conditions represent the sources and they are located in an open subset of the boundary $S$. The Neumann boundary conditions which represent the waves produced by the sources in $S$, are being observed in $R$, which is an open subset of the boundary, with $\overline{S}\cap \overline{R}=\emptyset$. The method employed to solve this inverse problem uses arguments of the boundary control method. Applications of the boundary control method are given in \cite{belishev2007recent} which concerns inverse problems in acoustics, elasticity, electrodynamics, electrical impedance tomography and graph theory.

Well-posedness results for the time harmonic Maxwell's equations are given in \cite{somersalolinearized} in the case of isotropic electromagnetic parameters and in \cite{Keniganisotropic} in the case of anisotropic electromagnetic parameters that are conformal multiples. Both the isotropic and anisotropic cases are considered in \cite{costabel1991coercive} where a coercive billinear form for the decoupled system is constructed. The time domain is also taken into account in \cite{uniquenessanisotropic} and the well-posedness is proved for non-analytic anisotropic parameters that are conformal multiples of each other. In this paper, we prove well-posedness for the equations considered by augmenting Maxwell's system to an elliptic one. Further details are provided later in sections \ref{resultssec} and \ref{wellposednesssec}.

The linearized inverse boundary value problem of Maxwell's equations is studied in \cite{somersalolinearized}. The electric permittivity, conductivity and magnetic permeability are assumed to be functions that differ slightly from given constants and they are recovered through the admittance map.
 The case of isotropic (scalar) electromagnetic parameters is also considered in \cite{Mcdowall1997}. In this case the Maxwell's system can be decoupled to derive a second order differential equation in terms of the Laplace-Beltrami operator either with respect to the magnetic field or the electric field. Using the calculus of Pseudodifferential Operators \cite{shubin1987pseudodifferential}, \cite{treves1980introduction} the symbols of the admittance and impedance maps are determined in terms of a pseudodifferential operator of order $1$ that factorises the Laplace-Beltrami operator. It is shown that this implies the boundary recovery of the electric permittivity $\varepsilon$ and magnetic permeability $\mu$ and their first normal derivatives. This result is extended in \cite{Joshi} where the demonstration of an inductive proof establishes the boundary determination of the normal derivatives of all orders of the electromagnetic parameters. This method uses a special class of Pseudodifferential Operators that depends smoothly on the distance from the boundary. Similar reasoning is followed in \cite{Lionheart} for the inverse problem of the Laplacian on $k-$forms where the boundary data are represented by the $DtN$ operator.
 
 A different approach on the inverse problem of time harmonic Maxwell's equations for scalar electromagnetic parameters is demonstrated in \cite{Somersalo1994Calderon}. The principal symbols of the impedance and admittance maps are derived using a boundary operator called the Calderon projector for an augmented system equivalent to the Maxwell's system. 
 The isotropic inverse problem for the time harmonic Maxwell's equations is also studied in \cite{chiralMcDowal} where the medium is a subset of $\mathbb{R}^{3}$ and is assumed to be chiral. In this method the magnetic permeability is known and the recovery of the electric permittivity, conductivity and chirality constant throughout the medium of interest is proven. The augmented system used in \cite{Somersalo1994Calderon} to construct solutions for this system and use complex geometrical optics, is also used in the present work to establish well-posedness.  The inverse problem for the metrics induced by $\varepsilon$ and $\mu$ being $(0,2)-$tensors and conformal multiples of each other is studied in \cite{Keniganisotropic}. To the best of our knowledge, \cite{Keniganisotropic} is the only anisotropic case regarding the recovery of $\varepsilon$ and $\mu$ studied in the existing literature.
 
 The article \cite{Leeconductivities} concerns the anisotropic conductivity problem. The method employed is the same factorisation method used for the isotropic case of the time harmonic Maxwell's equations in \cite{Mcdowall1997}. In the case of the dimension $n$ satisfying $n\geq 3$ the conductivity problem can be reduced to the problem of recovering a Riemannian metric $g$ using the $DtN$ map for harmonic functions. 
The same factorisation method is also applied to the elasticity problem for transversely isotropic materials in  \cite{Nakamura1997layer}. The elasticity operator, which is a second order elliptic operator, is factorised in terms of a pseudodifferential operator of order $1$ making use of the theory of monic matrix polynomials \cite{Lancaster}. The aforementioned operator is used to express the $DtN$ map which yields to the recovery of the elastic parameters at the boundary. This is the approach that we follow in this paper in order to calculate the principal symbols of the boundary mappings.

\subsection{Summary of Current Work}
 
Now let us describe the contents of the present work. We start in section \ref{backgroundsec} by introducing the background material and notation including new metrics $\hat{\varepsilon}$ and $\hat{\mu}$ which are conformally related to $\varepsilon^{\sharp}$ and $\mu^\sharp$ but also eliminate the dependence of Maxwell's equations on $g$. Then follows section \ref{resultssec}, where we state our main results. In order to show that the impedance and admittance maps are well defined we start in section \ref{wellposednesssec} by illustrating the proof of well-posedness of the forward problem. To do so, we augment the Maxwell's system to an $8 \times 8$ elliptic system also used in \cite{Somersalo1994Calderon} and \cite{chiralMcDowal}. The basic step, following \cite{taylor1996partial}, is to prove a coercivity estimate for the elliptic operator of the system.

In section \ref{nonuniquenesssec}, we start by defining the impedance and admittance maps. 
First, we study two types of non-uniqueness that arise from diffeomorphisms that fix the boundary. We show that if $(\hat{\varepsilon},\hat{\mu})$,  $(\hat{\varepsilon}',\hat{\mu}')$ are two different sets of metrics connected by such a diffeomorphism $\Phi$ then the boundary data of the corresponding Dirichlet problems are the same.
Next, we examine the consequences of a diffeomorphism $\Phi$ that fixes the boundary to the recovery of the determinant of the metric $g$, denoted by $\lvert g\rvert$, in boundary normal coordinates for $\varepsilon^{\sharp}$ and $\mu^{\sharp}$. We prove that the Jacobian determinant $\lvert D\Phi \rvert $ of this diffeomorphism can be arbitrary in boundary coordinates and as a result, the determinant $\lvert g \rvert $ cannot be uniquely determined in either boundary normal coordinates for $\varepsilon^{\sharp}$ or $\mu^{\sharp}$. This affects the boundary recovery of $\varepsilon^{\sharp}$ and $\mu^{\sharp}$ as we show in section \ref{nonuniquenesssec}.

 In section \ref{systemdecouplingsec} we decouple the system and follow \cite{Nakamura1997layer} together with the theory of monic matrix polynomials \cite{Lancaster} to derive the principal symbols of the factorisation operators denoted by $B(x,D_{\tilde{x}})$ and $C(x,D_{\tilde{x}})$ representing the normal derivatives in coordinates. Next, we express the principal symbols of the impedance and admittance maps through the principal symbols of $B$ and $C$. Proceeding to section \ref{boundaryrecoverysec}, we show that the principal symbols of $\Lambda_{\varepsilon}$ and $\Lambda_{\mu}$ determine the tangential entries of the metrics $\hat{\varepsilon}$ and $\hat{\mu}$. Next, investigating the consequences of the boundary recovery on the electric and magnetic fields, we prove that in some cases we can recover the normal components of $\hat{\mu}$ in boundary normal coordinates for $\hat{\varepsilon}$. Under the same choice of coordinates, we show that when boundary recovery of the metrics $\hat{\varepsilon}$, $\hat{\mu}$ is possible, the boundary recovery of their normal derivatives of all orders is implied. This is done by adjusting to our case the inductive proof employed in \cite{Joshi} and \cite{Lionheart} that uses Riccati equations satisfied by $B(x, D_{\tilde{x}})$, $C(x,D_{\tilde{x}})$ together with a class of pseudodifferential operators that allows us to work exclusively with principal symbols at the boundary.
 
 \section{Background material}\label{backgroundsec}
 
 Let us introduce some prerequisites from tensor calculus that will be needed in the following sections. Further details on the topic are given in \cite{lee2013smooth}.
 We will denote by $\tau_{n}^{m}(M)$ the spaces of smooth $(n,m)-$ complex valued tensor fields on $M$. The Riemmanian $g$ induces a point-wise billinear form $\langle \cdot, \cdot \rangle_{g}$ on these spaces. Also, we introduce the space $\Omega^{k}(M)$ that consists of the smooth complex-valued differential $k-$forms. 
 For $a,b$ $\in$ $\Omega^{k}(M)$ we will write the point-wise inner product of $a$ with $b$ as $\langle a, \bar{b}\rangle_{g}$.  
 For $a,b$ $\in$ $\Omega^{1}(M)$ we express $\langle a, \bar{b}\rangle_{g}$ in a local coordinate chart as
 \begin{equation*}
\langle a, \bar{b}\rangle_{g}= g^{ab}a_{a}\bar{b}_{b},
 \end{equation*}
 which induces the norm $\langle a, \bar{a}\rangle_{g}^{1/2}$.
 Note carefully that, in our notation, for complex valued fields the bilinear form $\langle a,b \rangle_g$ is not an inner-product because we do not include the conjugation. Indeed, in local coordinates it is written as
 \begin{equation*}
 \langle a, b \rangle_{g}= g^{ab}a_{a}b_{b}.
 \end{equation*}
 Furthermore, we will write $\lvert a \rvert _{g}^{2} = \langle a, a \rangle_{g}$, which for complex $a$ does not give a norm.
 
 \begin{remark}\label{innerproductremark}
 The purpose of excluding the conjugation from $\langle a, b \rangle_{g}$ and $\lvert a \rvert _{g}^2$ is that in what follows we will make use of the fact that $\lvert a \rvert _{g}^2$ can vanish for complex $a$. When we reach this point, the reader is reminded with a remark (see Remark \ref{innervanish}).
 \end{remark}
 
 \noindent The inner product on $L^{2}(\Omega^{k}(M))$ is given by
 \begin{equation*}
 \langle \langle a,\bar{b} \rangle\rangle _{g}= \int\limits_{M} a \wedge \ast_{g} \bar{b},
 \end{equation*}
 where $a,b$ $\in$ $\Omega^{k}(M)$. In what follows, we will also use inner products defined as above but with $g$ replaced by metrics induced from the electromagnetic parameters $\varepsilon$ and $\mu$.
 Using the Levi-Civita connection on $(M,g)$, denoted by $\nabla$, we define the spaces $H^{1}(\Omega^{k}(M))$ as 
 \begin{equation*}
 H^{1}(\Omega^{k}(M))= \{a \in L^{2}(\Omega^{k}(M)); \quad \nabla a \in L^{2}(\tau_{k+1}^{0}(M))\},
 \end{equation*}
 equipped with the norm
 \begin{equation*}
 \|a \| _{H^{1}(\Omega^{k}(M))}^{2}= \| a \| _{L^{2}(\Omega^{k}(M))}^{2}+ \| \nabla a \|_{L^{2}(\tau_{k+1}^{0}(M))}^{2}.
 \end{equation*}
 Let $\iota: \partial M \mapsto M$ be the inclusion map at the boundary and define the pullback $\iota^{\ast}: \Omega^{k}(M) \mapsto \Omega^{k}(\partial M)$ on smooth differential forms. The pullback can be extended by continuity to $\iota^{\ast}: H^{1}(\Omega^{k}(M))\mapsto H^{1/2}(\Omega^{k}(\partial M))$.
 The definition of $H^{1/2}$ and in general fractional Sobolev spaces is given in section $4.1$ of \cite{taylor1996partial}, in terms of the Fourier transform.
 Moreover, we define the Sobolev space $H_{0}^{1}(\Omega^{k}(M))$ as
 \begin{equation*}
 H_{0}^{1}(\Omega^{k}( M))= \{ a \in H^{1}(\Omega^{k}(M)); \quad \iota^{\ast}a=0\}.
 \end{equation*}
  The $(1,1)-$tensors $\varepsilon$ and $\mu$ are assumed to be smooth and real valued. We interpret them as point-wise maps on the cotangent bundle $T^{\ast}(M)$ and assume that they are symmetric and positive definite with respect to $g$. For $a,b$ $\in$ $T_{x}^{\ast}(M)$ these properties are described by, respectively,
 \begin{equation*}
 \langle a, \varepsilon \bar{b} \rangle_{g} = \langle \varepsilon a, \bar{b} \rangle_{g}, \ \langle a, \mu \bar{b} \rangle_{g} = \langle \mu a, \bar{b} \rangle_{g},
 \end{equation*} 
 \begin{equation*}
 \langle a, \varepsilon \bar{a} \rangle_{g} >0, \quad \langle a, \mu \bar{a} \rangle_{g} >0 \quad \mbox{whenever $a \neq 0$.}
 \end{equation*}
 Due to the above, $\varepsilon$, $\mu$ and their inverses $\varepsilon^{-1}$, $\mu^{-1}$ induce metrics on the cotangent and tangent bundles $T^{\ast}(M)$ and $T(M)$, respectively. The metrics on $T^{\ast}(M)$ induced by $\varepsilon$ and $\mu$ are $(2,0)-$tensors and using the $\sharp$ (raising indices) operator are provided in a local coordinate by
 \begin{equation*}
 \varepsilon^{\sharp}=\varepsilon^{ij}=g^{ik}\varepsilon_{k}^{j}, \quad \mu^{\sharp}=\mu^{ij}=g^{ik}\mu_{k}^{j}.
 \end{equation*}
We will write the inverses of $\varepsilon^{\sharp}$ and $\mu^{\sharp}$ using the $\flat$ (lowering indices) operator as
 \begin{equation*}
 (\varepsilon^{-1})_{\flat}=(\varepsilon^{-1})_{ij}=g_{ik}(\varepsilon^{-1})_{j}^{k}, \quad (\mu^{-1})_{\flat}=(\mu^{-1})_{ij}=g_{ik}(\mu^{-1})_{j}^{k}.
 \end{equation*}
Note that we will often use the term metric both for Riemannian metrics, which are $(0,2)-$tensors and induce a bilinear form on $T(M)$, as well as for $(2,0)-$tensors and the corresponding bilinear forms on $T^{\ast}(M)$.

 \subsection{Notation and identities}
 Let us introduce the notation used in this paper. We are working in a set of coordinates adapted to the boundary $\partial M$ characterised by $x_{3}=0$ within the domain of the coordinate chart. These will be boundary normal coordinates either for $\varepsilon^{\sharp}$, $\mu^{\sharp}$ or the related metrics $\hat{\varepsilon}$, $\hat{\mu}$ introduced in \eqref{hatmetrics}. Since we are working on a $3-d$ manifold, the indices of tensor fields $i$ are assumed to take values from $1$ to $3$. In the case of an index restricted to $i=1,2$, we will add a tilde as in $\tilde{i}$ .
 When working with coordinate invariant expressions we will use the notation $\tilde{\xi}$ for the covector
 \begin{equation*}
 \xi_{\tilde{i}} \ dx^{\tilde{i}} = \sum_{\tilde{i}=1}^2  \xi_{\tilde{i}} \ dx^{\tilde{i}}.
 \end{equation*}
 If $\nu= \ dx^{3}$ is a covector normal to the boundary, we will denote by $\xi$ a covector that is written in components as
 \begin{equation*}
 \xi=\tilde{\xi}+\nu \xi_{3}.
 \end{equation*}
 The determinants of the Riemannian metrics $g$, $(\varepsilon^{-1})_{\flat}$ and $(\mu^{-1})_{\flat}$ in coordinates will be written as
 \begin{equation*}
 \lvert g \rvert :=\det(g), \quad \lvert \varepsilon^{-1}\rvert:=\det((\varepsilon^{-1})_{\flat}), \quad \lvert \mu^{-1}\rvert:=\det((\mu^{-1})_{\flat}).
 \end{equation*}
 The corresponding notation for the determinants of the metrics on $T^{\ast}(M)$ is
 \begin{equation*}
 \lvert g^{-1}\rvert:=\det(g^{-1}), \quad \lvert \varepsilon\rvert:=\det(\varepsilon^{\sharp}), \quad \lvert \mu\rvert:=\det(\mu^{\sharp}).
 \end{equation*}
Denoting by $\sigma^{ijk}$ the alternating tensor will use the following identities for the metric $g^{-1}$
 \begin{equation}\label{altdetinvid}
 \lvert g^{-1}\rvert \sigma^{pqr}=\sigma_{ijk}g^{pi}g^{qj}g^{rk},
 \end{equation}
 and for the matrix of cofactors of the Riemannian metric $g$ 
 \begin{equation}\label{identitycofact}
 \sigma^{aqj}\sigma^{dkb}g_{bj}= \lvert g \rvert (g^{ad}g^{qk}-g^{ak}g^{qd}).
 \end{equation}
 We define new Riemannian metrics $\hat{\varepsilon}^{-1}$ and $\hat{\mu}^{-1}$ by
  \begin{equation}\label{hatmetrics}
 \frac{\hat{\varepsilon}^{-1}}{\sqrt{\lvert\hat{\varepsilon}^{-1}\rvert}}= \frac{(\varepsilon^{-1})_{\flat}}{\sqrt{\lvert g \rvert}}, \quad \frac{\hat{\mu}^{-1}}{\sqrt{\lvert\hat{\mu}^{-1}\rvert}}= \frac{(\mu^{-1})_{\flat}}{\sqrt{\lvert g \rvert}},
 \end{equation}
 which allow us to write the time harmonic Maxwell's equations as given in \eqref{Maxg}
 in the equivalent form
 \begin{equation}\label{Maxhat}
 \begin{split}
 \ast_{\hat{\varepsilon}}\mathrm{d}H&= -i \omega E, \quad \delta_{\hat{\varepsilon}}  E= 0,\\
 \ast_{\hat{\mu}}\mathrm{d}E&= i \omega  H, \quad \delta_{\hat{\mu}} H= 0,\\
 \end{split}
 \end{equation}
 that is independent of $g$.  When working with the metrics \eqref{hatmetrics}, we will denote the associated impedance and admittance maps by $\Lambda_{\hat{\varepsilon}}$, $\Lambda_{\hat{\mu}}$, respectively. The approach we follow is to work with two different pairs of metrics $(\hat{\varepsilon},\hat{\mu})$, $(\hat{\varepsilon}',\hat{\mu}')$ and boundary mappings $\Lambda_{\hat{\varepsilon}}, \Lambda_{\hat{\mu}}$ and $\Lambda_{\hat{\varepsilon}'}$, $\Lambda_{\hat{\mu}'}$ to investigate the implications of the assumption $\Lambda_{\hat{\varepsilon}}=\Lambda_{\hat{\varepsilon}'}$, $\Lambda_{\hat{\mu}}=\Lambda_{\hat{\mu}'}$. 
 We will use the phrases ``boundary normal coordinates for $\hat{\varepsilon}$/$\hat{\varepsilon}'$" and ``boundary normal coordinates for $\hat{\mu}$/$\hat{\mu}'$" to express that we work separately in boundary normal coordinates for either the metrics $\hat{\varepsilon}$ and $\hat{\varepsilon}'$ or $\hat{\mu}$ and $\hat{\mu}'$. Assume that the representations of $\hat{\varepsilon}$ and $\hat{\varepsilon}'$ in boundary normal coordinates for $\hat{\varepsilon}$/$\hat{\varepsilon}'$ at $x_{3}=0$ are given by, respectively
 \begin{equation}\label{epsilonepsilon'tilde}
 \left(\begin{matrix}
 \tilde{\varepsilon}&0\\
 0^{T}& 1
 \end{matrix}\right), \quad  	\left(\begin{matrix}
 \tilde{\varepsilon}'&0\\
 0^{T}& 1
 \end{matrix}\right).
 \end{equation}
 Assume further that $\hat{\mu}$ and $\hat{\mu}'$ in boundary normal coordinates for $\hat{\mu}$/$\hat{\mu}'$ at $x_{3}=0$ are written as
 \begin{equation}\label{mumu'tilde}
 \left(\begin{matrix}
 \tilde{\mu}&0\\
 0^{T}& 1
 \end{matrix}\right), \quad 	\left(\begin{matrix}
 \tilde{\mu}'&0\\
 0^{T}& 1
 \end{matrix}\right).	
 \end{equation}
Note that $\tilde{\varepsilon}$, $ \tilde{\varepsilon}'$, $\tilde{\mu}$ and $\tilde{\mu}'$ define coordinate invariant metrics on $\partial M$. These are the tangential parts of  $\hat{\varepsilon}$, $ \hat{\varepsilon}'$, $\hat{\mu}$ and $\hat{\mu}'$ respectively.

As mentioned in Section \ref{sec1}, we are going to use the calculus of Pseudodifferential Operators $(\Psi DOs)$ to determine the principal symbols of the boundary mappings $\Lambda_{\hat{\varepsilon}}$ and $\Lambda_{\hat{\mu}}$. Using the Fourier transform and its inverse, $P$ is a pseudodifferential operator of order $m$ ($P \in \Psi DO^{(m)}$) in local coordinates if
\begin{equation}
P(x,D_{x})u(x)= \frac{1}{(2\pi)^{3}} \int\limits_{\mathbb{R}^{3}} e^{i x \cdot \xi} p(x,\xi)\hat{u}(\xi) \ d\xi,
\end{equation}
where $D_{x}= - i \partial_{x}$, $\hat{u}(\xi)$ denotes the Fourier transform of $u(x)$ and $p(x,\xi)$ is called the symbol of $P(x,D_{x})$ and is of order $m$ with respect to the dual variable $\xi$. For details on the definition and calculus of $\Psi DO$s, which will be used, see \cite{shubin1987pseudodifferential,treves1980introduction}.

\section{Results}\label{resultssec}
In this section we present the main results included in this paper. The content of the remainder of the paper contains the proofs for each result.

\begin{theorem}[Well-posedness of the forward problem.]\label{wellposednessthm}
	For $\omega$ $\in$ $\mathbb{R}$, outside a discrete set, the solution of the Dirichlet problem 
	\begin{equation*}
	\begin{split}
	\ast_{\hat{\varepsilon}}\mathrm{d} H&= -i \omega E, \quad \delta_{\hat{\varepsilon}}E=0,\\
	\ast_{\hat{\mu}}\mathrm{d} E&= i \omega H, \quad \delta_{\hat{\mu}}H=0,
	\end{split}
	\end{equation*}
	\begin{equation*}
	\iota^{\ast}H=F, 
	\end{equation*}
	exists, is unique and depends continuously on the data $F$. More particularly, there exists $\hat{C}>0$ that depends on $\hat{\varepsilon}$, $\hat{\mu}$ and $\omega$, such that
	\begin{equation*}
	\|E\|_{H^{1}(\Omega^{1}(M))}^{2}+ \|H\|_{H^{1}(\Omega^{1}(M))}^{2} \leq \hat{C} \|F\|_{H^{1/2}(\Omega^{1}(\partial M))}^{2}.
	\end{equation*}
\end{theorem}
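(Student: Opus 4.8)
The plan is to realize the Maxwell system \eqref{Maxhat} as part of a larger first-order system that is \emph{elliptic}, to prove a coercivity estimate for that augmented operator following \cite{taylor1996partial}, and then to recover well-posedness of the original problem by a Fredholm argument together with a gauge-type argument showing that the auxiliary fields vanish. First I would augment the system by introducing two auxiliary scalar fields (0-forms) $\phi,\psi \in \Omega^{0}(M)$ and enlarging the unknown to $\mathbf{u}=(E,H,\phi,\psi)\in \Omega^{1}(M)\oplus\Omega^{1}(M)\oplus\Omega^{0}(M)\oplus\Omega^{0}(M)$, an $8$-component object in local coordinates. Following \cite{Somersalo1994Calderon} and \cite{chiralMcDowal}, I would write the augmented equations so that the scalar potentials contribute gradient terms $\mathrm{d}\phi,\mathrm{d}\psi$ to the first-order equations while the divergence conditions $\delta_{\hat{\varepsilon}}E$, $\delta_{\hat{\mu}}H$ reappear as equations determining $\phi,\psi$. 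The key point is that the principal symbols of $\ast_{\hat{\varepsilon}}\mathrm{d}$ and $\ast_{\hat{\mu}}\mathrm{d}$ are degenerate on $1$-forms precisely along the radial covector direction $\xi$, whereas the symbol of a gradient $\mathrm{d}\phi$ spans exactly that direction and the symbol of a divergence detects exactly the $\xi$-component; the augmentation therefore fills in the missing direction and makes the augmented operator $\mathcal{A}_{\omega}$ elliptic. I would impose the given boundary condition $\iota^{\ast}H=F$ supplemented by homogeneous boundary conditions on the auxiliary fields, chosen so that (i) the boundary value problem satisfies the Lopatinski--Shapiro / coercivity condition and (ii) solutions with $\phi=\psi=0$ are exactly the solutions of \eqref{Maxhat}.

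Next I would establish the coercivity (Gårding) estimate for the sesquilinear form $B$ associated with $\mathcal{A}_{\omega}$: there exist constants $c>0$ and $C\ge 0$, depending on $\hat{\varepsilon},\hat{\mu},\omega$, such that
\[
\mathrm{Re}\, B(\mathbf{u},\mathbf{u}) \ge c\,\|\mathbf{u}\|_{H^{1}}^{2} - C\,\|\mathbf{u}\|_{L^{2}}^{2}
\]
for $\mathbf{u}$ with homogeneous boundary data. This is the main technical step and follows the scheme of \cite{taylor1996partial}: one derives it from ellipticity of the principal part, absorbing the lower-order ($\omega$-dependent) terms into the $L^{2}$ remainder. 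Combined with the compact embedding $H^{1}(\Omega^{k}(M))\hookrightarrow L^{2}(\Omega^{k}(M))$, this makes $\mathcal{A}_{\omega}$ Fredholm of index zero. Since $\mathcal{A}_{\omega}=\mathcal{A}_{0}+\omega\mathcal{B}$ with $\mathcal{B}$ of lower order and the dependence on $\omega$ holomorphic, the analytic Fredholm theorem yields invertibility of $\mathcal{A}_{\omega}$ for all $\omega$ outside a discrete set; restricting to real $\omega$ gives the discrete exceptional set in the statement.

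Finally I would close the loop by showing that the augmented solution actually solves Maxwell's equations, i.e.\ that the auxiliary fields vanish. Applying $\mathrm{d}$ and $\delta_{\hat{\varepsilon}},\delta_{\hat{\mu}}$ to the augmented equations and using the divergence-free conditions, the pair $(\phi,\psi)$ is seen to satisfy a homogeneous second-order elliptic (Laplace-type) problem with zero boundary data, which forces $\phi=\psi=0$ for $\omega$ outside a (possibly enlarged) discrete set; uniqueness of $(E,H)$ follows simultaneously. To obtain the stated estimate for inhomogeneous data, I would extend $F$ to some $\tilde{F}\in H^{1}(\Omega^{1}(M))$ with $\|\tilde{F}\|_{H^{1}}\le C\|F\|_{H^{1/2}}$, reduce to the homogeneous-boundary problem with right-hand side controlled by $\tilde{F}$, and track constants through the coercivity estimate and the bounded inverse to arrive at
\[
\|E\|_{H^{1}(\Omega^{1}(M))}^{2} + \|H\|_{H^{1}(\Omega^{1}(M))}^{2} \le \hat{C}\,\|F\|_{H^{1/2}(\Omega^{1}(\partial M))}^{2}.
\]

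I expect the main obstacle to be the dual requirement on the auxiliary boundary conditions: they must render the augmented problem elliptic in the Lopatinski--Shapiro sense while remaining compatible with \eqref{Maxhat}, and one must then rigorously verify that the auxiliary fields vanish for the general anisotropic metrics $\hat{\varepsilon},\hat{\mu}$. By contrast, the coercivity estimate itself, though technical, is routine once ellipticity of the augmented operator is in hand.
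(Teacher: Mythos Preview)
Your proposal is correct and follows essentially the same approach as the paper: augment to the $8\times 8$ elliptic system of \cite{Somersalo1994Calderon,chiralMcDowal}, prove a coercivity estimate \`a la \cite{taylor1996partial}, obtain a discrete exceptional set via Fredholm/spectral theory, and show the auxiliary scalars vanish by decoupling them into a homogeneous elliptic problem. The only minor variation is that the paper obtains the discrete set by showing the augmented operator is self-adjoint with compact resolvent (hence discrete spectrum), whereas you invoke analytic Fredholm theory; both arguments are standard and interchangeable here.
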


\begin{proof} 
See Section \ref{wellposednesssec}.
\end{proof}

\begin{theorem}[Non-uniqueness.]\label{nonuniquenessthm}
If $(\hat{\varepsilon},\hat{\mu})$ and $(\hat{\varepsilon}',\hat{\mu}')$ are related by a diffeomorphism fixing the boundary of $M$ then $\Lambda_{\hat{\varepsilon}}=\Lambda_{\hat{\varepsilon}'}$. Additionally, let $g_{\varepsilon}$, $g_{\varepsilon'}$, $g_{\mu}$ and $g_{\mu'}$ be the local coordinate representations of the metric $g$ in boundary normal coordinates for $\varepsilon^{\sharp}$/$\varepsilon'^{\sharp}$, $\mu^{\sharp}$/$\mu'^{\sharp}$. There exist pairs of metrics $(\varepsilon^{\sharp},\mu^{\sharp})$, $(\varepsilon'^{\sharp},\mu'^{\sharp})$ not equal to each other such that $\Lambda_\epsilon =\Lambda_{\epsilon'}$ and the ratios of the determinants in the different sets of boundary normal coordinates $\frac{\lvert g_{\varepsilon}\rvert}{\lvert g_{\varepsilon'}\rvert}$, $\frac{\lvert g_{\mu}\rvert}{\lvert g_{\mu'}\rvert}$ are arbitrarily chosen positive functions in a neighbourhood of the boundary.
\end{theorem}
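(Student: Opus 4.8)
\emph{First assertion.} The plan is to use the fact that the reformulated system \eqref{Maxhat} is expressed entirely through the coordinate invariant operators $\mathrm{d}$, $\ast_{\hat{\varepsilon}}$, $\ast_{\hat{\mu}}$, $\delta_{\hat{\varepsilon}}$, $\delta_{\hat{\mu}}$, and hence transforms naturally under pullback. Suppose $\Phi:M\to M$ fixes $\partial M$ pointwise and $(\hat{\varepsilon}',\hat{\mu}')=\Phi_{\ast}(\hat{\varepsilon},\hat{\mu})$, equivalently $(\hat{\varepsilon},\hat{\mu})=\Phi^{\ast}(\hat{\varepsilon}',\hat{\mu}')$. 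First I would record the naturality identities $\Phi^{\ast}\mathrm{d}=\mathrm{d}\,\Phi^{\ast}$, $\Phi^{\ast}\ast_{\hat{\varepsilon}'}=\ast_{\hat{\varepsilon}}\Phi^{\ast}$ and $\Phi^{\ast}\delta_{\hat{\varepsilon}'}=\delta_{\hat{\varepsilon}}\Phi^{\ast}$, together with the analogues for $\hat{\mu}$; these hold because the Hodge star and codifferential are determined by the metric and $\Phi^{\ast}$ pulls $\hat{\varepsilon}'$ back to $\hat{\varepsilon}$. Letting $(E',H')$ be the solution of \eqref{Maxhat} for $(\hat{\varepsilon}',\hat{\mu}')$ with $\iota^{\ast}H'=F$, which exists and is unique by Theorem \ref{wellposednessthm}, applying $\Phi^{\ast}$ to each equation then shows that $(E,H):=(\Phi^{\ast}E',\Phi^{\ast}H')$ solves \eqref{Maxhat} for $(\hat{\varepsilon},\hat{\mu})$. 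Since $\Phi\circ\iota=\iota$ on $\partial M$ we have $\iota^{\ast}\Phi^{\ast}=\iota^{\ast}$, so $\iota^{\ast}H=F$ and, by uniqueness, $\Lambda_{\hat{\varepsilon}}(F)=\iota^{\ast}E=\iota^{\ast}E'=\Lambda_{\hat{\varepsilon}'}(F)$; as $F$ is arbitrary this gives $\Lambda_{\hat{\varepsilon}}=\Lambda_{\hat{\varepsilon}'}$.

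\emph{Second assertion.} The key observation is that \eqref{Maxhat}, and therefore $\Lambda_{\varepsilon}=\Lambda_{\hat{\varepsilon}}$ (by the equivalence of \eqref{Maxg} and \eqref{Maxhat}), depends on $(\varepsilon^{\sharp},\mu^{\sharp},g)$ only through $(\hat{\varepsilon},\hat{\mu})$. Inverting \eqref{hatmetrics} I would record that $\varepsilon^{\sharp}=(\lvert g\rvert\det\hat{\varepsilon})^{-1/2}\hat{\varepsilon}$ and $\mu^{\sharp}=(\lvert g\rvert\det\hat{\mu})^{-1/2}\hat{\mu}$, the conformal factors being genuine scalars because $\lvert g\rvert\det\hat{\varepsilon}$ and $\lvert g\rvert\det\hat{\mu}$ are coordinate invariant. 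Thus the background volume, carried by $\lvert g\rvert$, is invisible to the data and is exactly the quantity we expect to remain free; the strategy is to exhibit two admissible triples sharing the same impedance map but with prescribed, unequal background determinants in their respective boundary normal coordinates.

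\emph{Construction.} I would first prove the auxiliary fact that the Jacobian determinant of a boundary fixing diffeomorphism can be prescribed arbitrarily near $\partial M$: writing $\Phi(\tilde{x},x_{3})=(\varphi^{1},\varphi^{2},\varphi^{3})$ with $\Phi|_{x_{3}=0}=\mathrm{id}$, the conditions $\varphi^{\tilde{i}}(\tilde{x},0)=x^{\tilde{i}}$ for $\tilde{i}=1,2$ and $\varphi^{3}(\tilde{x},0)=0$ force $D\Phi$ at $x_{3}=0$ to be block triangular with tangential block equal to the identity, so that $\lvert D\Phi\rvert=\partial_{3}\varphi^{3}$ is an unconstrained positive function there and may be extended freely into a neighbourhood. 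Using such a $\Phi$ I set $(\hat{\varepsilon}',\hat{\mu}')=\Phi_{\ast}(\hat{\varepsilon},\hat{\mu})$, so that $\Lambda_{\varepsilon'}=\Lambda_{\varepsilon}$ by the first assertion, and I reconstruct $\varepsilon'^{\sharp}$, $\mu'^{\sharp}$ from $(\hat{\varepsilon}',\hat{\mu}')$ together with a freely chosen background $g'$ via \eqref{hatmetrics}; choosing $g'$ away from $\Phi_{\ast}g$ makes the resulting pair different from $(\varepsilon^{\sharp},\mu^{\sharp})$. Because $\varepsilon'^{\sharp}$ is a conformal multiple of $\Phi_{\ast}\varepsilon^{\sharp}$, the boundary normal coordinates for $\varepsilon^{\sharp}$ and for $\varepsilon'^{\sharp}$ are related by $\Phi$ up to a conformal reparametrisation of the normal variable; passing $\lvert g\rvert$ and $\lvert g'\rvert$ through this change of coordinates contributes a power of $\lvert D\Phi\rvert$ times conformal factors coming from \eqref{hatmetrics}, and by choosing $\lvert D\Phi\rvert$ and $g'$ appropriately one can realise any prescribed positive value of $\lvert g_{\varepsilon}\rvert/\lvert g_{\varepsilon'}\rvert$ near the boundary, with $\lvert g_{\mu}\rvert/\lvert g_{\mu'}\rvert$ matched in the same step using the remaining freedom in $g$ and $g'$.

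\emph{Main obstacle.} The hard part is the bookkeeping in the final step. Boundary normal coordinates are built from the geodesic distance to $\partial M$, which depends nonlinearly and nonlocally on the metric, so the passage from coordinates adapted to $\varepsilon^{\sharp}$ to those adapted to $\varepsilon'^{\sharp}$ is not $\Phi$ itself but $\Phi$ composed with a conformal normal reparametrisation dictated by \eqref{hatmetrics}; isolating how $\lvert g\rvert$ transforms under this composite map, and checking that the net factor is still an arbitrary multiple governed by $\lvert D\Phi\rvert$, is where the care lies. A further delicate point is that $\lvert g_{\varepsilon}\rvert$ and $\lvert g_{\mu}\rvert$ are the single background $g$ read in two generally distinct coordinate systems, so the two prescribed ratios must be met simultaneously by one diffeomorphism and one choice of the backgrounds $g$, $g'$.
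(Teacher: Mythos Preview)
Your treatment of the first assertion is correct and is essentially the paper's argument, phrased via naturality of $\mathrm{d}$, $\ast$ and $\delta$ rather than the explicit action $H=(D\Phi)^T H'$.

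For the second assertion your outline drifts from the statement and misses the key simplification the paper exploits. Two concrete points:

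\textbf{(i) There is no second background $g'$.} The theorem fixes a single metric $g$; the quantities $g_{\varepsilon}$, $g_{\varepsilon'}$ are that same $g$ expressed in two different coordinate systems. Introducing a ``freely chosen $g'$'' is unnecessary and does not match what must be proved. In the paper one simply takes $\hat{\varepsilon}'=\Phi_{\ast}\hat{\varepsilon}$, $\hat{\mu}'=\Phi_{\ast}\hat{\mu}$ and reconstructs $\varepsilon'^{\sharp},\mu'^{\sharp}$ from these via \eqref{hatmetrics} with the \emph{same} $g$.

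\textbf{(ii) The ``hard bookkeeping'' you flag dissolves once you use the right commutative diagram.} Because $\hat{\varepsilon}'=\Phi_{\ast}\hat{\varepsilon}$, the maps to boundary normal coordinates satisfy $\Psi=\Psi'\circ\Phi$, and $\hat{\varepsilon}$, $\hat{\varepsilon}'$ have \emph{identical} expressions in their respective BNCs. Hence, with $\hat g,\hat g'$ the representations of $g$ in those BNCs, one has directly $|\hat g|=|D\Phi|^{-2}|\hat g'|$. The passage from $\hat{\varepsilon}$-BNCs to $\varepsilon^{\sharp}$-BNCs is then a pure normal rescaling $\tilde\Psi$ with $|D\tilde\Psi^{-1}|=(|\hat g|/|\hat{\varepsilon}^{-1}|)^{-1/4}$, read off from \eqref{hatmetrics}; this gives $|g_{\varepsilon}|=\sqrt{|\hat g|}\sqrt{|\hat{\varepsilon}^{-1}|}$ and hence the clean relation $|g_{\varepsilon}|=|D\Phi|^{-1}|g_{\varepsilon'}|$. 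No ``conformal reparametrisation composed with $\Phi$'' needs to be disentangled; the two steps factor.

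Your sketch of why $|D\Phi|$ can be prescribed is only an argument at $x_3=0$; the paper proves this as a separate lemma with an explicit one-variable construction $\Phi(\tilde x,x_3)=(\tilde x,f(\tilde x,x_3))$ that guarantees $\partial_{x_3}f=h$ for all small $x_3$ and $\Phi=\mathrm{id}$ outside a compact set. Finally, the simultaneous prescription of $|g_{\varepsilon}|/|g_{\varepsilon'}|$ and $|g_{\mu}|/|g_{\mu'}|$ is not achieved by juggling extra freedom in $g,g'$ as you propose, but simply by running the $\varepsilon$ and $\mu$ arguments separately (each near a chosen boundary point) and composing the resulting diffeomorphisms.
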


\begin{proof} 
See Section \ref{sec:nonunique}.
\end{proof}

\begin{theorem}[Boundary recovery of tangential components of the metrics.]\label{metricsequalinbncsthm}
	Let $(\hat{\varepsilon},\hat{\mu})$, $(\hat{\varepsilon}',\hat{\mu}')$ be two different sets of metrics and $\Lambda_{\hat{\varepsilon}}$, $\Lambda_{\hat{\varepsilon}'}$ the associated impedance maps. Assuming that $\Lambda_{\hat{\varepsilon}}=\Lambda_{\hat{\varepsilon}'}$, then 
	\begin{equation*}
	\tilde{\varepsilon}=\tilde{\varepsilon}', \quad \mbox{at} \ x_{3}=0,
	\end{equation*}
	\begin{equation*}
	\tilde{\mu}=\tilde{\mu}', \quad \mbox{at} \ x_{3}=0.
	\end{equation*}
\end{theorem}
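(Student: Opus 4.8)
The plan is to pass from the operator identity $\Lambda_{\hat{\varepsilon}}=\Lambda_{\hat{\varepsilon}'}$ to an identity of principal symbols on $T^{\ast}(\partial M)$, and then to read off the tangential metrics from the $\tilde{\xi}$-dependence of those symbols. First I would note that the hypothesis supplies two operator identities rather than one: since $\Lambda_{\hat{\mu}}$ is the inverse of $\Lambda_{\hat{\varepsilon}}$ and $\Lambda_{\hat{\mu}'}$ the inverse of $\Lambda_{\hat{\varepsilon}'}$, the assumption $\Lambda_{\hat{\varepsilon}}=\Lambda_{\hat{\varepsilon}'}$ immediately yields $\Lambda_{\hat{\mu}}=\Lambda_{\hat{\mu}'}$ as well. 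Because two $\Psi DO$s that agree as operators share the same full symbol, and in particular the same principal symbol, I obtain equality of the principal symbol of $\Lambda_{\hat{\varepsilon}}$ with that of $\Lambda_{\hat{\varepsilon}'}$, and likewise for $\Lambda_{\hat{\mu}}$ and $\Lambda_{\hat{\mu}'}$, as matrix-valued functions of the tangential covector $\tilde{\xi}$ at each boundary point.

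Next I would invoke the computation of Section \ref{systemdecouplingsec}, where the principal symbols of $\Lambda_{\hat{\varepsilon}}$ and $\Lambda_{\hat{\mu}}$ are expressed through the principal symbols of the factorisation operators $B(x,D_{\tilde{x}})$ and $C(x,D_{\tilde{x}})$. The essential feature to exploit is that these symbols are evaluated at $x_{3}=0$, where, in boundary normal coordinates for $\hat{\varepsilon}$ and $\hat{\mu}$, the metrics reduce to the block forms of \eqref{epsilonepsilon'tilde} and \eqref{mumu'tilde}, so that the normal--normal entry equals $1$ and the mixed tangential--normal entries vanish. Substituting these block forms should collapse the symbols of $B$ and $C$ into explicit algebraic expressions in $\tilde{\xi}$ whose only metric inputs are the tangential blocks $\tilde{\varepsilon}$ and $\tilde{\mu}$ (respectively their primed counterparts). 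Since $\tilde{\varepsilon}$ and $\tilde{\mu}$ are intrinsic metrics on $\partial M$, this reduces the whole question to an intrinsic statement on the boundary and sidesteps the fact that the two boundary normal coordinate systems associated with $\hat{\varepsilon}$ and $\hat{\varepsilon}'$ need not coincide.

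With the symbols written out, the recovery becomes an algebraic inversion. Each principal symbol is homogeneous of a fixed degree in $\tilde{\xi}$ and, as in the isotropic Laplace--Beltrami model of \cite{Mcdowall1997} and the forms setting of \cite{Lionheart}, is built from the quadratic forms that $\tilde{\varepsilon}$ and $\tilde{\mu}$ induce when paired with $\tilde{\xi}$. Matching the unprimed symbols against the primed ones for every $\tilde{\xi}$, I would extract pointwise algebraic equations relating $\tilde{\varepsilon}$ to $\tilde{\varepsilon}'$ and $\tilde{\mu}$ to $\tilde{\mu}'$. I expect the equality of the impedance symbols together with that of the admittance symbols to force equality of the underlying quadratic forms $\tilde{\xi}\mapsto\tilde{\varepsilon}(\tilde{\xi},\tilde{\xi})$ and $\tilde{\xi}\mapsto\tilde{\mu}(\tilde{\xi},\tilde{\xi})$; since a symmetric bilinear form on the two-dimensional space $T_{x}^{\ast}(\partial M)$ is determined by its values on all covectors, polarisation then gives $\tilde{\varepsilon}=\tilde{\varepsilon}'$ and $\tilde{\mu}=\tilde{\mu}'$ at $x_{3}=0$.

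The hard part will be the disentangling step. The principal symbols of $\Lambda_{\hat{\varepsilon}}$ and $\Lambda_{\hat{\mu}}$ each depend on both tangential metrics at once — the permittivity enters through $\ast_{\hat{\varepsilon}}$ in $\ast_{\hat{\varepsilon}}\mathrm{d}H=-i\omega E$ while the admittance enters through $\ast_{\hat{\mu}}$ in $\ast_{\hat{\mu}}\mathrm{d}E=i\omega H$ — so it will not suffice to read one symbol in isolation. I anticipate having to combine the two symbol identities and to evaluate on a well-chosen family of covectors, or to take determinants and traces, in order to isolate $\tilde{\varepsilon}$ and $\tilde{\mu}$ separately. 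Verifying the injectivity of the map that sends $(\tilde{\varepsilon},\tilde{\mu})$ to the pair of boundary principal symbols — that no two distinct pairs of tangential metrics produce identical symbols for all $\tilde{\xi}$ — is the technical heart of the argument, and it is here that the explicit factorisation by monic matrix polynomials of \cite{Nakamura1997layer} and \cite{Lancaster} must be used in full.
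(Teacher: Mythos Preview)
Your overall strategy is right and matches the paper's: equate the principal symbols of the impedance and admittance maps and read off the tangential metrics from their $\tilde{\xi}$-dependence. Your observation that $\Lambda_{\hat\mu}=\Lambda_{\hat\varepsilon}^{-1}$ gives two symbol identities from one hypothesis is also exactly what is used.

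Where your proposal goes astray is in the last paragraph. You anticipate a genuine ``disentangling step'' because the principal symbols of $\Lambda_{\hat\varepsilon}$ and $\Lambda_{\hat\mu}$ should each depend on \emph{both} tangential metrics, and you plan to combine the two identities, take traces or determinants, and invoke the monic-matrix-polynomial machinery to separate them. In fact no such coupling survives at the level of principal symbols. The explicit computation in the paper (Theorems \ref{impedanceprincsymbol} and \ref{admittanceprincsymbol}) gives
\[
\lambda_{\hat\varepsilon}^{(1)}(F)=-\frac{\tilde\xi\;\ast_{\iota^{\ast}\hat\varepsilon}(\tilde\xi\wedge F)}{\omega\,\langle\nu_{\hat\varepsilon},\xi_{\hat\varepsilon}\rangle_{\hat\varepsilon}},
\qquad
\lambda_{\hat\mu}^{(1)}(G)=\frac{\tilde\xi\;\ast_{\iota^{\ast}\hat\mu}(\tilde\xi\wedge G)}{\omega\,\langle\nu_{\hat\mu},\xi_{\hat\mu}\rangle_{\hat\mu}},
\]
so $\lambda_{\hat\varepsilon}^{(1)}$ involves only $\hat\varepsilon$ and $\lambda_{\hat\mu}^{(1)}$ only $\hat\mu$. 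The $\hat\mu$-dependence you expect in $\lambda_{\hat\varepsilon}^{(1)}$ does enter through the decomposition $H^{(0)}=a_{\hat\varepsilon}\chi_{\hat\varepsilon}+b_{\hat\varepsilon}\xi_{\hat\mu}$, but $\xi_{\hat\mu}$ lies in the kernel of the symbol of $\ast_{\hat\varepsilon}\mathrm{d}$ and the remaining factor $|\xi_{\hat\varepsilon}|_{\hat\mu}^{2}$ cancels between the curl term and the coefficient $a_{\hat\varepsilon}$. Once you have these formulas the proof is one line: equality of $\lambda_{\hat\varepsilon}^{(1)}$ and $\lambda_{\hat\varepsilon'}^{(1)}$ forces
\[
\sqrt{|\tilde\varepsilon^{-1}|}\,|\tilde\xi|_{\tilde\varepsilon}=\sqrt{|\tilde\varepsilon'^{-1}|}\,|\tilde\xi|_{\tilde\varepsilon'}
\quad\text{for all }\tilde\xi,
\]
and since in two dimensions the map $\tilde\varepsilon\mapsto|\tilde\varepsilon^{-1}|\,\tilde\varepsilon$ is injective (take determinants), this gives $\tilde\varepsilon=\tilde\varepsilon'$. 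The argument for $\tilde\mu$ is symmetric. So the ``technical heart'' you describe is absent; the monic-polynomial factorisation of \cite{Nakamura1997layer,Lancaster} is needed to \emph{derive} the clean symbol formulas above, but once they are in hand no further algebraic gymnastics are required.
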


\begin{proof} 
See Section \ref{metricsequalinbncsec}.
\end{proof}

\begin{theorem}[Boundary recovery of the full metrics.]\label{normalmuthm}
	Let $(\hat{\varepsilon},\hat{\mu})$ $(\hat{\varepsilon}',\hat{\mu}')$ be two different sets of metrics and $\Lambda_{\hat{\varepsilon}}$, $\Lambda_{\hat{\varepsilon}'}$ the associated boundary mappings. Let us fix boundary normal coordinates for $\hat{\varepsilon}$/$\hat{\varepsilon}'$.
	If $\Lambda_{\hat{\varepsilon}}=\Lambda_{\hat{\varepsilon}'}$, then at each point in $\partial M$ least one of the following cases holds:
	\begin{enumerate}
		\item The metrics $\tilde{\mu}$ and $\tilde{\varepsilon}$ are multiples and $\hat{\mu}'^{3\tilde{j}}=c \hat{\mu}^{3\tilde{j}}$ for some $c \in \mathbb{R}$.
		\item $\displaystyle \hat{\varepsilon}=\hat{\varepsilon}'$ and $\hat{\mu}=\hat{\mu}'$.
	\end{enumerate}
\end{theorem}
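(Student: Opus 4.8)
The plan is to start from the tangential recovery already provided by Theorem \ref{metricsequalinbncsthm} and to extract the remaining, normal and mixed, components of $\hat{\mu}$ from the Maxwell system \eqref{Maxhat} evaluated at $\partial M$, exploiting that $\Lambda_{\hat{\varepsilon}}=\Lambda_{\hat{\varepsilon}'}$ forces the two solution pairs to share all of their boundary traces. First I would fix boundary normal coordinates for $\hat{\varepsilon}$ and for $\hat{\varepsilon}'$. In these coordinates both metrics are block diagonal with tangential block $\tilde{\varepsilon}=\tilde{\varepsilon}'$ and normal entry $1$, so Theorem \ref{metricsequalinbncsthm} gives $\hat{\varepsilon}=\hat{\varepsilon}'$ at $x_{3}=0$ outright; the only quantities left to compare are $\hat{\mu}^{3\tilde{j}},\hat{\mu}^{33}$ against $\hat{\mu}'^{3\tilde{j}},\hat{\mu}'^{33}$, since already $\tilde{\mu}=\tilde{\mu}'$.

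Next I would upgrade the agreement of tangential traces to agreement of the full field $E$. For common data $F$ the solutions satisfy $\iota^{\ast}H=\iota^{\ast}H'=F$ and, since $\Lambda_{\hat{\varepsilon}}=\Lambda_{\hat{\varepsilon}'}$, also $\iota^{\ast}E=\iota^{\ast}E'$. Writing $E=\frac{i}{\omega}\ast_{\hat{\varepsilon}}\mathrm{d}H$ from \eqref{Maxhat} and using that $\hat{\varepsilon}$ is block diagonal, the normal component $E_{3}$ at $x_{3}=0$ is a determined multiple of the tangential curl $(\mathrm{d}H)_{12}$, the factor being $\sqrt{|\hat{\varepsilon}|}=\sqrt{\det\tilde{\varepsilon}}$, which is already known. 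Hence $E_{3}=E_{3}'$ and the full one-form $E$ agrees with $E'$ along $\partial M$. The analogous step for $H$ is exactly where the difficulty concentrates: its normal component is computed from $H=-\frac{i}{\omega}\ast_{\hat{\mu}}\mathrm{d}E$, and because $\hat{\mu}$ is not block diagonal in these coordinates, $(\ast_{\hat{\mu}}\mathrm{d}E)_{3}$ mixes the tangential curl of $E$ with the unknown normal derivatives $\partial_{3}E_{\tilde{j}}$ and with the unknown $\hat{\mu}^{3\tilde{j}},\hat{\mu}^{33}$.

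To close the system I would use Faraday's law $\mathrm{d}E=i\omega\ast_{\hat{\mu}}H$ together with both components of $\ast_{\hat{\mu}}\mathrm{d}E=i\omega H$. Its tangential part reads $(\mathrm{d}E)_{12}=i\omega\sqrt{|\hat{\mu}^{-1}|}\,(\hat{\mu}H)_{3}$, relating the determined tangential curl of $E$ to the normal flux $(\hat{\mu}H)_{3}=\hat{\mu}^{3\tilde{j}}F_{\tilde{j}}+\hat{\mu}^{33}H_{3}$ and to the determinant $|\hat{\mu}|$; the two tangential components of $\ast_{\hat{\mu}}\mathrm{d}E=i\omega H$ then determine the normal derivatives $\partial_{3}E_{\tilde{j}}$, and the normal component determines $H_{3}$, all in terms of the shared boundary data and of $\hat{\mu}^{3\tilde{j}},\hat{\mu}^{33},|\hat{\mu}|$. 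Requiring these identities to hold for every $F$, equivalently for every tangential frequency $\tilde{\xi}$ with the normal derivatives replaced at principal order by the factorisation operators $B(x,D_{\tilde{x}}),C(x,D_{\tilde{x}})$ of Section \ref{systemdecouplingsec}, whose symbols are already fixed by $\Lambda_{\hat{\varepsilon}}$, turns the comparison of the primed and unprimed data into a family of polynomial identities in $\tilde{\xi}$. Matching coefficients produces a linear algebraic system in the differences $\hat{\mu}^{3\tilde{j}}-\hat{\mu}'^{3\tilde{j}}$ and $\hat{\mu}^{33}-\hat{\mu}'^{33}$ once the determinant ambiguity is accounted for.

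The last step is the case analysis, which I expect to be the main obstacle. The coefficient matrix of this system has a determinant that, up to positive factors, measures the failure of $\tilde{\mu}$ and $\tilde{\varepsilon}$ to be proportional. When $\tilde{\mu}$ and $\tilde{\varepsilon}$ are not multiples the matrix is invertible and forces $\hat{\mu}^{3\tilde{j}}=\hat{\mu}'^{3\tilde{j}}$ and $\hat{\mu}^{33}=\hat{\mu}'^{33}$, so that together with the first step $\hat{\varepsilon}=\hat{\varepsilon}'$ and $\hat{\mu}=\hat{\mu}'$, which is case $2$. When $\tilde{\mu}$ and $\tilde{\varepsilon}$ are multiples the matrix drops rank: only the direction of the mixed component survives, leaving $\hat{\mu}'^{3\tilde{j}}=c\hat{\mu}^{3\tilde{j}}$ for some $c\in\mathbb{R}$ while $\hat{\mu}^{33}$ remains free, which is case $1$. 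The delicate points will be pinning down the exact form of this coefficient matrix and its determinant, and keeping careful track of $|\hat{\mu}|$, whose non-uniqueness, the volume-form obstruction of Theorem \ref{nonuniquenessthm}, is precisely what prevents recovery of $\hat{\mu}^{33}$ and of the scalar $c$ in the degenerate case; a further subtlety is reconciling the two distinct boundary normal coordinate systems at $\partial M$, where they coincide to leading order because $\hat{\varepsilon}=\hat{\varepsilon}'$ there.
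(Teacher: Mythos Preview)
Your overall strategy---recover $\hat{\varepsilon}=\hat{\varepsilon}'$ and $\tilde{\mu}=\tilde{\mu}'$ from Theorem \ref{metricsequalinbncsthm}, upgrade to $E=E'$ at $\partial M$ via the normal component of $\ast_{\hat{\varepsilon}}\mathrm{d}H$, then squeeze out the remaining entries of $\hat{\mu}$ from the principal symbol of the Maxwell system and do a case split on whether $\tilde{\mu},\tilde{\varepsilon}$ are proportional---is exactly the skeleton of the paper's proof. The paper, however, organises the last step differently: rather than staying in boundary normal coordinates for $\hat{\varepsilon}$ and trying to close a system from all Maxwell components there, it invokes Theorem \ref{fieldsequalattheboundary} a second time, in boundary normal coordinates for $\hat{\mu}/\hat{\mu}'$, to obtain $H_{3}=H_{3}'$ as well. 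This yields the pair of scalar identities of Lemma \ref{chimumu'lemma}, one from $E_{3}=E_{3}'$ and one from $H_{3}=H_{3}'$; taking imaginary parts gives two polynomial relations in $\tilde{\xi}$ (the system \eqref{impartsyst}) from which \eqref{parallelvecteq} and the dichotomy fall out directly. The advantage of this symmetric two-chart approach is that one never has to assemble your full ``linear algebraic system'' or track $\partial_{3}E_{\tilde{j}}$ explicitly.

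There is one genuine gap in your route as written. You assert that the principal symbols of the factorisation operators $B,C$ ``are already fixed by $\Lambda_{\hat{\varepsilon}}$'' and can therefore be used to replace normal derivatives and produce a \emph{linear} system in the unknown differences. This is not correct: by \eqref{BJordan}--\eqref{CJordan} the symbols $B^{(1)},C^{(1)}$ depend on the eigenvalues ${\xi_{\hat{\mu}}}_{3},{\xi_{\hat{\varepsilon}}}_{3}$ and on the Jordan covectors $\chi_{\hat{\varepsilon}},\xi_{\hat{\mu}},\gamma_{\hat{\mu}}$, all of which involve the still-unknown entries $\hat{\mu}^{3\tilde{j}},\hat{\mu}^{33}$ (see \eqref{xihatmu3}). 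So $B^{(1)}\neq B'^{(1)}$ in general at this stage, and the system you obtain is nonlinear in the unknowns. The paper sidesteps this entirely: its identities \eqref{chimumu'id}--\eqref{chiepsilonepsilon'id} come from the explicit decomposition $E^{(0)}=a_{\hat{\mu}}\chi_{\hat{\mu}}+b_{\hat{\mu}}\xi_{\hat{\varepsilon}}$ and its primed counterpart, with the unknown $\hat{\mu}$-data appearing only through ${\xi_{\hat{\mu}}}_{3}$ and ${\chi_{\hat{\mu}}}_{3}$, and the resulting relations are handled directly rather than as a linear system. A smaller point: the non-recovery of $\hat{\mu}^{33}$ and of $c$ in case~1 is not a manifestation of Theorem \ref{nonuniquenessthm} (that theorem concerns the background volume form $\sqrt{|g|}$, which has already been eliminated by passing to $\hat{\varepsilon},\hat{\mu}$); it is simply that when $\tilde{\mu}$ and $\tilde{\varepsilon}$ are proportional the two identities of Lemma \ref{chimumu'lemma} collapse and no longer constrain those quantities.
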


\begin{proof} 
See Section \ref{normalmusec}.
\end{proof}

\begin{theorem}[Boundary recovery of normal derivatives of the metrics.]\label{normalderivativesthm}
	Let $(\hat{\varepsilon},\hat{\mu})$, $(\hat{\varepsilon}',\hat{\mu}')$ be two different sets of metrics and $\Lambda_{\hat{\varepsilon}}$, $\Lambda_{\hat{\varepsilon}'}$ the corresponding boundary mappings. Let us fix boundary normal coordinates for $\hat{\varepsilon}$/$\hat{\varepsilon}'$. If
	\begin{equation*}
	\Lambda_{\hat{\varepsilon}}=\Lambda_{\hat{\varepsilon}'},
	\end{equation*}
	\begin{equation*}
	\hat{\varepsilon}=\hat{\varepsilon}', \quad \hat{\mu}=\hat{\mu}' \quad \mbox{at} \ x_{3}=0,
	\end{equation*}
	then,
	\begin{equation*}
	\partial_{x_{3}}^{\kappa}\hat{\varepsilon}=	\partial_{x_{3}}^{\kappa}\hat{\varepsilon}', \quad 	\partial_{x_{3}}^{\kappa}\hat{\mu}=	\partial_{x_{3}}^{\kappa}\hat{\mu}' \quad \mbox{at} \ x_{3}=0,
	\end{equation*}
	for any $\kappa\geq 1$.
\end{theorem}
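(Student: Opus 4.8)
The plan is to follow the inductive scheme of \cite{Joshi} and \cite{Lionheart}, adapted to the anisotropic system at hand, using the factorisation operators $B(x,D_{\tilde{x}})$ and $C(x,D_{\tilde{x}})$ constructed in Section \ref{systemdecouplingsec}. Recall that, after decoupling \eqref{Maxhat}, the fields $E$ and $H$ satisfy second-order equations of Laplace type whose coefficients depend on $\hat{\varepsilon}$, $\hat{\mu}$ and their tangential derivatives, and that $B$ and $C$ are first-order pseudodifferential operators realising the normal derivatives $\partial_{x_{3}}E = BE$ and $\partial_{x_{3}}H = CH$ modulo smoothing. Substituting these factorisations into the decoupled equations produces Riccati equations for the full symbols $b \sim \sum_{j\geq 0} b_{1-j}$ and $c \sim \sum_{j\geq 0} c_{1-j}$. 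The structure of these equations is such that the term of homogeneity $1-j$ is determined by the lower-order terms $b_{1},\dots,b_{2-j}$ (respectively for $c$) together with the coefficients of the decoupled operator and their normal derivatives up to order $j$; crucially, the $j$-th normal derivatives of $\hat{\varepsilon}$ and $\hat{\mu}$ enter $b_{1-j}$ and $c_{1-j}$ linearly.

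Next I would record the relation between the impedance map and the factorisation operators. Using the solution operator from Theorem \ref{wellposednessthm}, one shows, as in \cite{Nakamura1997layer} and the isotropic case of \cite{Mcdowall1997}, that $\Lambda_{\hat{\varepsilon}}$ is a pseudodifferential operator on $\partial M$ whose full symbol is determined by the boundary values of the symbols of $B$ and $C$ at $x_{3}=0$. Consequently, the hypothesis $\Lambda_{\hat{\varepsilon}}=\Lambda_{\hat{\varepsilon}'}$ forces the full symbols of the factorisation operators for the two problems to coincide at $x_{3}=0$, order by order in homogeneity. To convert these boundary symbol identities into statements about normal derivatives of the metrics, I would employ the class of pseudodifferential operators depending smoothly on the normal variable $x_{3}$ used in \cite{Joshi} and \cite{Lionheart}, which lets one treat $B$ and $C$ as $x_{3}$-families and work exclusively with their principal symbols near the boundary.

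The heart of the argument is an induction on $\kappa$. The base case $\kappa=0$ is exactly the hypothesis $\hat{\varepsilon}=\hat{\varepsilon}'$, $\hat{\mu}=\hat{\mu}'$ at $x_{3}=0$. For the inductive step, assume $\partial_{x_{3}}^{j}\hat{\varepsilon} = \partial_{x_{3}}^{j}\hat{\varepsilon}'$ and $\partial_{x_{3}}^{j}\hat{\mu}=\partial_{x_{3}}^{j}\hat{\mu}'$ at $x_{3}=0$ for all $j\leq\kappa-1$. By the Riccati recursion this implies that the symbols $b_{1-j}$ and $c_{1-j}$ agree for the two problems for all $j\leq\kappa-1$ at the boundary. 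Matching the order-$(1-\kappa)$ parts of the (equal) symbols of $\Lambda_{\hat{\varepsilon}}$ and $\Lambda_{\hat{\varepsilon}'}$ then yields an identity in which every term except the one carrying the $\kappa$-th normal derivatives of $\hat{\varepsilon}$ and $\hat{\mu}$ already agrees by the inductive hypothesis. Since these $\kappa$-th derivatives enter linearly, solving the resulting linear relation gives $\partial_{x_{3}}^{\kappa}\hat{\varepsilon}=\partial_{x_{3}}^{\kappa}\hat{\varepsilon}'$ and $\partial_{x_{3}}^{\kappa}\hat{\mu}=\partial_{x_{3}}^{\kappa}\hat{\mu}'$, completing the step.

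The main obstacle I anticipate is precisely this last point: because the decoupled operators for $E$ and $H$ each involve \emph{both} $\hat{\varepsilon}$ and $\hat{\mu}$, the order-$(1-\kappa)$ symbol identities mix the $\kappa$-th normal derivatives of the two metrics, and one must check that the combined system coming from $B$ and $C$ (together with the divergence constraints $\delta_{\hat{\varepsilon}}E=0$, $\delta_{\hat{\mu}}H=0$ and the normalisation afforded by boundary normal coordinates for $\hat{\varepsilon}$/$\hat{\varepsilon}'$, in which the normal entries of $\hat{\varepsilon}$ are fixed) is genuinely solvable for both sets of derivatives simultaneously. This is the step where the matrix-valued, anisotropic nature of the problem departs from the scalar computations of \cite{Joshi} and \cite{Lionheart}, and it will require a careful linear-algebraic analysis of the principal and subprincipal symbol structure to ensure that the relevant coefficient map is invertible at each order.
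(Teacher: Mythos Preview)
The overall inductive framework you describe---Riccati equations for the factorisation operators, the $x_{3}$-dependent symbol class of \cite{Joshi,Lionheart}, induction on $\kappa$---matches the paper, and you have correctly identified the central difficulty: the decoupled operators mix $\hat{\varepsilon}$ and $\hat{\mu}$, so the $\kappa$-th derivatives of both metrics appear together. But you stop exactly where the work begins. The paper does not try to solve for both sets of derivatives simultaneously from a single symbol identity; instead it runs a \emph{staggered} inductive step: first use the Riccati equation for $Y=B'-B$ (the $H$-side) to recover $\partial_{x_{3}}^{\kappa}\hat{\varepsilon}$ alone (Theorem~\ref{indHthm}), and only then, with $\hat{\varepsilon}$ already matched to order $\kappa$, use the Riccati equation for $J=C'-C$ (the $E$-side) to recover $\partial_{x_{3}}^{\kappa}\hat{\mu}$ (Theorem~\ref{indEthm}). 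The isolation of the relevant derivative within each stage is achieved by evaluating the symbol identities on the specific eigenvectors $\chi_{\hat{\varepsilon}}$ and $\chi_{\hat{\mu}}$ of $B^{(1)}$ and $C^{(1)}$, which kills the cross-terms; this eigenvector trick is the anisotropic replacement for the scalar computations in \cite{Joshi,Lionheart} and is precisely the ``linear-algebraic analysis'' you flag but do not supply.

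A second gap is in how you extract information from $\Lambda_{\hat{\varepsilon}}=\Lambda_{\hat{\varepsilon}'}$. You propose to read off the lower-order symbols of $\Lambda_{\hat{\varepsilon}}$ directly, but the paper does not compute these. Instead it uses the consequence that $E=E'$ and $H=H'$ at $x_{3}=0$ (Lemma~\ref{prinsymbEE'HH'equallemma}), applies Maxwell's and the divergence equations \emph{directly} to $\tilde{H}=H'-H$ and $\tilde{E}=E'-E$ to express $\sigma_{p}(D_{x_{3}}^{a}\tilde{H})$, $\sigma_{p}(D_{x_{3}}^{a}\tilde{E})$ in terms of the metric differences (Theorems~\ref{Ythm}, \ref{Jthm}), and compares these with the alternative expressions coming from $Y$, $J$ acting on $H^{(0)}$, $E^{(0)}$ (Theorems~\ref{Dx3Hthm}, \ref{Dx3Ethm}). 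Finally, closing the induction requires showing that $Y,J\in\Psi DO^{(1,\kappa)}$ improve to $\Psi DO^{(1,\kappa+1)}$ once the $\kappa$-th derivatives agree; the paper does this via a homogeneous Sylvester equation (Theorem~\ref{orderofYJthm}) whose unique solvability uses that the eigenvalues of $B^{(1)}$ and of $-(T_{H}^{-1}A_{H}+B^{(1)})$ are complex conjugates and hence distinct. Without these three ingredients---the staggered recovery, the eigenvector evaluation, and the Sylvester argument---your outline does not close.

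(Minor point: in the paper $B$ factorises the $H$-equation and $C$ the $E$-equation, so $D_{x_{3}}H=BH$ and $D_{x_{3}}E=CE$, the reverse of what you wrote.)
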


\begin{proof} 
See Section \ref{normalderivativessec}.
\end{proof}

\section{Sketch of the proof of Theorem \ref{wellposednessthm}}\label{wellposednesssec}
Our proof of Theorem \ref{wellposednessthm} relies on augmenting Maxwell's equations to a larger elliptic system and proving a coercivity estimate for this system. Since the methods are fairly standard and well-known, we only give a sketch of this proof.

To begin, we augment the system \eqref{Maxhat} to
\begin{equation}\label{Peq}
PU=	-i \left (
\begin{matrix}
i \omega & 0 & \delta_{\hat{\mu}} & 0 \\
0 & i \omega &  \ast_{\hat{\varepsilon}} \mathrm{d} &  \mathrm{d} \\
\mathrm{d} &-\ast_{\hat{\mu}} \mathrm{d} & i \omega & 0\\
0 & \delta_{\hat{\varepsilon}} & 0 & i \omega
\end{matrix}
\right )
\left (
\begin{matrix}
u_{E} \\ E \\ H \\ u_{H}
\end{matrix}
\right )
= 
\left (
\begin{matrix}
0 \\ 0 \\ 0 \\ 0
\end{matrix}
\right ),
\end{equation}
where $u_{E}$ and $u_{H}$ are auxiliary $0-$forms. Note that if $u_{E}=u_{H}=0$, then $E$ and $H$ satisfying \eqref{Peq} also satisfy \eqref{Maxhat}. The purpose of this step is that $P$ is an elliptic operator, i.e its principal symbol is invertible for $\xi$ $\in$ $\mathbb{R}^{3}\backslash\{0\}$, in contrast to the operator corresponding to the system \eqref{Maxhat}. The field $U$ is defined in $\Omega(M)$, where 
\begin{equation*}
\Omega(M)= \Omega^{0}(M) \oplus \Omega^{1}(M) \oplus \Omega^{1}(M) \oplus \Omega^{0}(M)
\end{equation*}
and the boundary condition is written in terms of $U$ as
\begin{equation}\label{Bbcs}
BU= \left( \begin{matrix}
\iota^{\ast}&0&0&0\\
0&0&0&0\\
0&0&\iota^{\ast}&0\\
0&0&0&\iota^{\ast}
\end{matrix}\right)\left (
\begin{matrix}
u_{E} \\ E \\ H \\ u_{H}
\end{matrix}
\right )= \left(\begin{matrix}0\\0\\F\\0  \end{matrix}\right).
\end{equation}
The spaces $L^{2}(\Omega(M))$ and $H^{1}(\Omega(M))$ are defined using the inner products
and norms given from the product structure
\begin{equation}\label{Omegainner}
\begin{split}
(U,V)&=		\int_{M} \left( \begin{matrix}
u_{E}\\
E\\
H\\
u_{H}
\end{matrix} \right)\wedge \left( \begin{matrix}
\ast_{\hat{\mu}}&0&0&0\\
0&\ast_{\hat{\varepsilon}}&0&0\\
0&0&\ast_{\hat{\mu}}&0\\
0&0&0&\ast_{\hat{\varepsilon}}
\end{matrix} \right)\left( \begin{matrix}
\overline{u_{E}}\\
\overline{E}\\
\overline{H}\\
\overline{u_{H}}
\end{matrix} \right),
\end{split}
\end{equation}
\begin{equation*}
\left\| \left( \begin{matrix}
u_{E}\\
E\\
H\\
u_{H}
\end{matrix}
\right)\right \|^{2}= \|u_{E}\|^{2} + \|E\|^{2}+\|H\|^{2}+ \|u_{H}\|^{2},
\end{equation*}
where the norms are either $L^{2}$ or $H^{1}$.
Transforming the boundary value problem consisting of \eqref{Peq} and \eqref{Bbcs} to the equivalent one with non-homogeneous differential equation and homogeneous Dirichlet condition we define the domain of $P$ to be
\begin{equation*}
D(P)= \{U \in H^{1}(\Omega(M)); \quad BU=0\}.
\end{equation*}
The main step in the proof is to prove the following coercivity estimate which holds for any $U$ $\in$ $D(P)$ 
\begin{equation}\label{Pineq}
\|U\|_{H^{1}(\Omega(M))}^{2} \leq C \|PU\|_{L^{2}(\Omega(M))}^{2} + C \|U\|_{L^{2}(\Omega(M))}^{2},
\end{equation}
for some $C>0$, depending on $\hat{\varepsilon}$, $\hat{\mu}$ and $\omega$.  In \cite[Theorem $11.1$]{taylor1996partial} is shown that the ellipticity of $P$ implies that \eqref{Pineq} holds in the interior of $M$. Thus, it remains to show that the estimate \eqref{Pineq} holds in a coordinate chart adapted to the boundary. Using the method in \cite[Proposition $11.2$]{taylor1996partial}, it suffices to show that $P$ satisfies the hypothesis of regularity upon freezing the coefficients.   The only difference is that we consider the eigenvalues and eigenvectors of $K_{1}(\tilde{\xi})+K_{0}$ instead of $K_{1}(\tilde{\xi})$ as denoted in the proof of \cite[Proposition $11.2$]{taylor1996partial} to accomodate the fact that the eigenvectors of $K_{1}(\tilde{\xi})$ in our case do not satisfy \cite[Lemma $11.7$]{taylor1996partial}.
Next, using the symmetry of $P$ with respect to the  the $L^{2}-$product defined in \eqref{Omegainner} and using the coercivity estimate we show that $P$ is densely defined self-adjoint with compact resolvent. Hence, the set of eigenvalues of $P$ is discrete. Finally, to obtain the result for the original time harmonic Maxwell's equations we must show that, for values of $\omega$ outside the spectrum, if $u_{E} \vert_{\partial M}= u_{H}\vert_{\partial M} = 0$ then $u_E = u_H = 0$. This is done by showing that $u_E$ and $u_H$ satisfy eigenvalue problems which can be decoupled from the augmented system.
\section{Boundary mappings and non-uniqueness result}\label{nonuniquenesssec}
The fact that the boundary value problem with Dirichet condition on $H$ is well-posed outside a discrete set of $\omega$ implies the corresponding result for the boundary value problem with Dirichlet condition
\begin{equation*}
\iota^{\ast}E=G.
\end{equation*} 
Therefore, we can define the impedance map $\Lambda_{\hat{\varepsilon}}: \Omega^{1}(\partial M) \mapsto \Omega^{1}(\partial M)$ realised by
\begin{equation*}
\Lambda_{\hat{\varepsilon}}(\iota^{\ast}H)=\iota^{\ast}E
\end{equation*}
and its inverse $\Lambda_{\hat{\mu}}:\Omega^{1}(\partial M) \mapsto \Omega^{1}(\partial M)$ given by
\begin{equation*}
\Lambda_{\hat{\mu}}(\iota^{\ast}E)=\iota^{\ast}H
\end{equation*}
which is called the admittance map.
Next we study the non-uniqueness implied by diffeomorphisms $\Phi$ that fix the boundary and thus prove Theorem \ref{nonuniquenessthm}.

\subsection{Proof of Theorem \ref{nonuniquenessthm}}\label{sec:nonunique}

Let us consider the sets of metrics $(\hat{\varepsilon},\hat{\mu})$ and $(\hat{\varepsilon}',\hat{\mu}')$ connected by a diffeomorphism $\Phi:$ $M\mapsto M$ with $\Phi \vert_{\partial M}= \mathbb{I}$ as
\begin{equation}\label{pullbacks}
\hat{\varepsilon}= \Phi^{\ast} \hat{\varepsilon}', \quad \hat{\mu}=\Phi^{\ast} \hat{\mu}'.
\end{equation}
Because of the coordinate invariant form of \eqref{Maxhat}, the action of $\Phi$ on the magnetic field $H$ is
\begin{equation*}
H= (D\Phi)^{T}H'
\end{equation*}
and since $\Phi \vert_{\partial M}= \mathbb{I}$ we get $\iota^{\ast}H=\iota^{\ast}H'$. The same is also true for the electrial field: $\iota^{\ast}E=\iota^{\ast}E'$.
Therefore the impedance maps corresponding to the $\hat{\varepsilon}$, $\hat{\varepsilon}'$ metrics satisfy
\begin{equation*}
\Lambda_{\hat{\varepsilon}}(\iota^{\ast}H)=\Lambda_{\hat{\varepsilon}'}(\iota^{\ast}H).
\end{equation*}
Hence, the metrics $(\hat{\varepsilon},\hat{\mu})$ and $(\hat{\varepsilon}',\hat{\mu}')$ lead to the same boundary data, which implies the same for the metrics $(\varepsilon^{\sharp},\mu^{\sharp},g)$ and $(\varepsilon'^{\sharp},\mu'^{\sharp},g)$ defined through \eqref{hatmetrics}.

To complete the proof of Theorem \ref{nonuniquenessthm} we need the following Lemma.

\begin{lemma}\label{philemma}
Suppose that $\{x^j\}_{j=1}^3$ are a set of local boundary coordinates on a domain $U \subset M$ and take any positive function $h \in C^\infty(M)$ such that $\mathrm{supp}(h-1) \subset U$. Then there exists a diffeomorphism $\Phi:M \mapsto M$ such that
	\begin{equation*}
	\Phi{\vert}_{\partial M}=\mathbb{I}
	\end{equation*}
and in the local coordinates the Jacobian determinant satisfies $\lvert D\Phi \rvert = h$ for all $x^3$ sufficiently small.
\end{lemma}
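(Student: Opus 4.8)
The plan is to look for a $\Phi$ that displaces points only in the normal coordinate direction, so that the Jacobian condition collapses to a single first-order problem in $x^3$. Concretely, I would seek $\Phi$ of the form
\[
\Phi(x^1,x^2,x^3)=\bigl(x^1,x^2,\psi(x^1,x^2,x^3)\bigr),
\]
with $\psi$ smooth and strictly increasing in $x^3$ for each fixed $(x^1,x^2)$. For such a shear the coordinate Jacobian matrix is lower triangular with diagonal $(1,1,\partial_{x^3}\psi)$, so $\lvert D\Phi\rvert=\partial_{x^3}\psi$. The three demands of the lemma then read: (i) $\psi(x^1,x^2,0)=0$, which encodes $\Phi\vert_{\partial M}=\mathbb I$; (ii) $\partial_{x^3}\psi=h$ for $x^3$ small, which yields $\lvert D\Phi\rvert=h$ near the boundary; and (iii) $\partial_{x^3}\psi>0$ everywhere, which makes $x^3\mapsto\psi(x^1,x^2,x^3)$ a bijection and hence $\Phi$ a diffeomorphism. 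Condition (i) is automatic once the normal derivative is prescribed, since one may simply set $\psi(x^1,x^2,x^3)=\int_0^{x^3}\partial_{x^3}\psi\,ds$.

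The construction of $\partial_{x^3}\psi$ is where the support hypothesis enters. Let $K:=\mathrm{supp}(h-1)\subset U$, which is compact, and use the collar structure of the boundary coordinates to fix $\epsilon>0$ and an open $W\subset\mathbb R^2$ with $\pi(K)\subset W$ and $W\times[0,\epsilon)\subset U$, where $\pi$ is the projection to the first two coordinates. Choose cut-offs $\alpha,\beta\in C^\infty([0,\infty))$ with $\alpha\equiv1$ near $x^3=0$, $\mathrm{supp}\,\alpha\subset[0,\epsilon/4)$, and $\beta\ge0$, $\mathrm{supp}\,\beta\subset(\epsilon/4,\epsilon/2)$, $\int_0^\infty\beta=1$. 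Writing $A(x^1,x^2):=\int_0^\infty\alpha(s)\bigl(h(x^1,x^2,s)-1\bigr)\,ds$, I would set
\[
\partial_{x^3}\psi=1+\alpha(x^3)\bigl(h(x^1,x^2,x^3)-1\bigr)-\beta(x^3)\,A(x^1,x^2).
\]
Near $x^3=0$ this equals $h$, giving (ii). Because $\int_0^\infty(\partial_{x^3}\psi-1)\,ds=A-A=0$ and the integrand vanishes for $x^3\ge\epsilon/2$, one gets $\psi=x^3$ for $x^3\ge\epsilon/2$; and when $(x^1,x^2)\notin\pi(K)$ one has $h\equiv1$ on the whole fibre, so $A=0$ and again $\psi=x^3$. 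Thus $\Phi$ agrees with the identity outside the compact set $\pi(K)\times[0,\epsilon/2]\subset U$, and so extends by the identity to a smooth map $\Phi\colon M\to M$ fixing $\partial M$.

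The remaining and genuinely delicate point is positivity (iii), which is exactly what upgrades $\Phi$ from a smooth map to a \emph{diffeomorphism}. On $\mathrm{supp}\,\alpha$ the last term vanishes and $1+\alpha(h-1)=(1-\alpha)+\alpha h>0$ as a convex combination of the positive numbers $1$ and $h$; on $\mathrm{supp}\,\beta$ the middle term vanishes and one needs $1-\beta A>0$, while elsewhere $\partial_{x^3}\psi=1$. Since $\lvert A\rvert\le \bigl(\sup_K\lvert h-1\rvert\bigr)\cdot\lvert\mathrm{supp}\,\alpha\rvert\to0$ as the support of $\alpha$ shrinks, and $\beta$ stays fixed, I can force $\lVert\beta A\rVert_\infty<1$ by taking $\mathrm{supp}\,\alpha$ small enough; this only shrinks the layer on which $\lvert D\Phi\rvert=h$, which is harmless because the lemma asks for this only for $x^3$ sufficiently small. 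Hence $\partial_{x^3}\psi>0$ throughout and $\Phi$ is the desired diffeomorphism. I expect the main obstacle to be purely the bookkeeping of constants: arranging the cut-offs, the sub-chart $W\times[0,\epsilon)$, and the smallness of $\mathrm{supp}\,\alpha$ so that positivity, the smooth matching to the identity beyond $x^3=\epsilon/2$, and containment in $U$ all hold at once. The qualitative mechanism—prescribe the normal derivative to be $h$ near the boundary and then correct it back to $1$ before leaving the chart—is robust.
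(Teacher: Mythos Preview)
Your proof is correct and follows essentially the same approach as the paper's: a normal shear $\Phi=(x^1,x^2,\psi)$ with $\partial_{x^3}\psi$ built from two disjointly supported bumps, the first imposing $h$ near the boundary and the second correcting the integral back so that $\psi=x^3$ beyond the collar. The only cosmetic difference is in how positivity of $\partial_{x^3}\psi$ is arranged---the paper fixes explicit scaling constants for its bump $\phi$ in terms of $\sup(h+1)$, whereas you keep $\beta$ fixed and shrink the support of $\alpha$ to force $\lvert A\rvert\lVert\beta\rVert_\infty<1$; both arguments are valid and yield the same conclusion.
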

\begin{proof}
We will assume without loss of generality that
\[
\mathrm{supp}(h-1) \subset \{(x^1,x^2,x^3) \in [-\epsilon,\epsilon]\times[-\epsilon,\epsilon]\times [0,\epsilon]\} \subset U.
\]
for some $\epsilon >0$. Let $\phi \in C^\infty_c(\mathbb{R})$ be an even function satisfying $0\leq \phi \leq 1$, $\mathrm{supp}(\phi) \subset (-1,1)$, $\phi(s) = 1$ for $s$ in a neighourhood of zero and $\int_{-\infty}^\infty \phi(s) \ \mathrm{d} s =1$. Also set
\[
a = 4\ \mathrm{sup}\{ h+1\} + 3, \quad b = \frac{2\epsilon}{a} \left ( \ \mathrm{sup}\{ h+1\} + 1\right ), \quad c = \frac{2 \epsilon}{a} \mathrm{sup}\{ h+1\},
\]
\[
d(x^1,x^2) = \frac{1}{c} \int_0^\infty (h(x^1,x^2,s)-1) \phi \left ( \frac{as}{\epsilon} \right ) \ \mathrm{d} s. 
\]
Then we define $\Phi$ within $U$ by
	\begin{equation*}
	\Phi(x^{1},x^{2},x^{3})=\left (x^{1},x^{2}, f(x^{1},x^{2},x^{3})\right),
	\end{equation*}
	with
\begin{equation}\label{defa3}
f(x^{1},x^{2},x^{3})=\int_{0}^{x^{3}} (h(x^{1},x^{2},s)-1)\phi\left (\frac{as}{\epsilon}\right ) - d(x^1,x^2) \phi\left ( \frac{s-b}{c} \right ) +1\  \mathrm{d}s.
\end{equation}
With this construction, we see first that $f(x^{1},x^{2},x^{3})$ satisfies $f(x^{1},x^{2},0)=0$ which implies that
$\Phi\vert_{\partial M}= \mathbb{I}$. Also, noting that the support of the first and second terms in the integrand on the right side of \eqref{defa3} is contained in $\{s \in [0,\epsilon]\}$ we can conclude
\begin{equation}\label{a3}
\frac{\partial f}{\partial x^3}(x^{1},x^{2},x^{3}) =  (h(x^{1},x^{2},x^3)-1)\phi\left (\frac{ax^3}{\epsilon}\right ) - d(x^1,x^2) \phi\left ( \frac{x^3-b}{c} \right ) +1 > 0
\end{equation}
everywhere and $f(x^1,x^2,x^3) = x^3$ for $(x^1,x^2,x^3) \notin [-\epsilon,\epsilon]\times[-\epsilon,\epsilon]\times [0,\epsilon]$. Thus, $\Phi$ can be extended as the identity outside $U$ to give a diffeomorphism on $M$ with Jacobian matrix given within $U$ by
	\begin{equation}\label{DPhi}
	D\Phi= \left(\begin{matrix}
	1&0& 0\\
	0&1&0\\
	\frac{\partial f}{\partial x^1} & \frac{\partial f}{\partial x^2} &\frac{\partial f}{\partial x^3}
	\end{matrix}
	\right).
	\end{equation}
From \eqref{a3}, we see that $\frac{\partial f}{\partial x^3} = h$ when $x^3$ is sufficiently small, and so the Jacobian determinant is $\lvert D\Phi \rvert = h$ for all such $x^3$. This completes the proof.
\end{proof}

Now let us proceed to the proof of the second part of Theorem \ref{nonuniquenessthm} using the diffeomorphism $\Phi$ constructed in Lemma \ref{philemma} in \eqref{pullbacks}. Given a set of coordinates on the boundary, let $\Psi$, $\Psi'$ be the corresponding coordinate transformations from $M$ to boundary normal coordinates for $\hat{\varepsilon}$ and $\hat{\varepsilon}'$, respectively. Let us denote by $\hat{g}$ and $\hat{g}'$ the local representations of the metric $g$ in boundary normal coordinates for $\hat{\varepsilon}/\hat{\varepsilon}'$. Then, referring to Figure \ref{fig1} and using the fact that $\Psi=\Psi' \circ \Phi$, we deduce that 
\begin{equation}\label{gg'hat}
\lvert	\hat{g}\rvert=\lvert D\Phi \rvert^{-2}\lvert\hat{g}'\rvert.
\end{equation}
	Using the result of Lemma \ref{philemma}, that $\lvert D\Phi \rvert$ can be an arbitrary positive function in a neighbourhood of the boundary, we conclude that the expression for the volume form of $g$ in boundary normal coordinates for $\hat{\varepsilon}$ cannot be determined from $\Lambda_{\hat{\varepsilon}}$. 
	
To complete the proof of Theorem \ref{nonuniquenessthm}, let us proceed to the following changes of coordinates
	\begin{equation*}
	\begin{split}
	\mbox{BNCs for $\hat{\varepsilon}^{-1}$} \quad \overset{\tilde{\Psi}}{\longrightarrow} \quad\mbox{BNCs for $(\varepsilon^{-1})_{\flat}$} ,\\
	\mbox{BNCs for $\hat{\varepsilon}'^{-1}$}\quad \overset{\tilde{\Psi}'}{\longrightarrow} \quad \mbox{BNCs for $(\varepsilon'^{-1})_{\flat}$}.
	\end{split}
	\end{equation*}
	Using the relationship between the $\hat{\varepsilon}^{-1}$ and $(\varepsilon^{-1})_{\flat}$ metrics given in \eqref{hatmetrics} as well the fact that $\hat{\varepsilon}$ and $\hat{\varepsilon}'$ have the same expression in their boundary normal coordinates, the Jacobian matrices of $\tilde{\Psi}^{-1}$ and $(\tilde{\Psi}')^{-1}$ are given by, respectively,
	\begin{equation*}
	D\tilde{\Psi}^{-1}=\left(\begin{matrix}
	1&0&0\\
	0&1&0\\
	\tilde{\Psi}_{31}&\tilde{\Psi}_{32}&\left( \frac{\lvert \hat{g}\rvert}{\lvert\hat{\varepsilon}^{-1}\rvert}\right)^{-1/4}
	\end{matrix}\right), \quad 
	D(\tilde{\Psi}')^{-1}=\left(\begin{matrix}
	1&0&0\\
	0&1&0\\
	\tilde{\Psi}'_{31}&\tilde{\Psi}'_{32}&\left( \frac{\lvert\hat{g}'\rvert}{\lvert \hat{\varepsilon}^{-1}\rvert}\right)^{-1/4}
	\end{matrix}\right)
	\end{equation*}
	where the off diagonal entries are undetermined functions which will be zero at the boundary. Denoting by $g_{\varepsilon}$ and $g_{\varepsilon'}$ the metrics $g$ and $g'$ in boundary normal coordinates for $(\varepsilon^{-1})_{\flat}$/$(\varepsilon'^{-1})_{\flat}$ we have
	\begin{equation*}
	\lvert g_{\varepsilon}\rvert= \lvert D\tilde{\Psi}\rvert^{-2}\lvert \hat{g}\rvert= \sqrt{\lvert\hat{g}\rvert}\sqrt{\lvert\hat{\varepsilon}^{-1}\rvert} \quad \mbox{and} \quad
	\lvert g_{\varepsilon'}\rvert= \lvert D\tilde{\Psi}'\rvert ^{-2}\lvert \hat{g}'\rvert= \sqrt{\lvert\hat{g}'\rvert}\sqrt{\lvert\hat{\varepsilon}^{-1}\rvert}.
	\end{equation*}
	Making use of \eqref{gg'hat}, we deduce
	\begin{equation*}
	\lvert g_{\varepsilon}\rvert=\lvert D\Phi\rvert^{-1} \lvert g_{\varepsilon'}\rvert.
	\end{equation*}
	As we showed in Lemma \ref{philemma}, the determinant $\lvert D\Phi \rvert$ can be any positive function which implies that $\lvert g_{\varepsilon} \rvert/\lvert g_{\varepsilon'} \rvert$ can be chosen arbitrarily in a neighbourhood of any point on the boundary and is otherwise equal to one in a neighbourhood of the boundary. By repeating this proof in a neighbourhood of any point on the boundary and combining the results as a composition we can complete the proof for $\epsilon^\sharp$. Demonstrating an approach symmetric to the above we can prove the corresponding result for the determinant $\lvert g \rvert $ in boundary normal coordinates for $\mu^{\sharp}$. This concludes the proof of Theorem \ref{nonuniquenessthm}.
	
	\begin{figure}[h]
		\centering
		\includegraphics[width=0.8\textwidth]{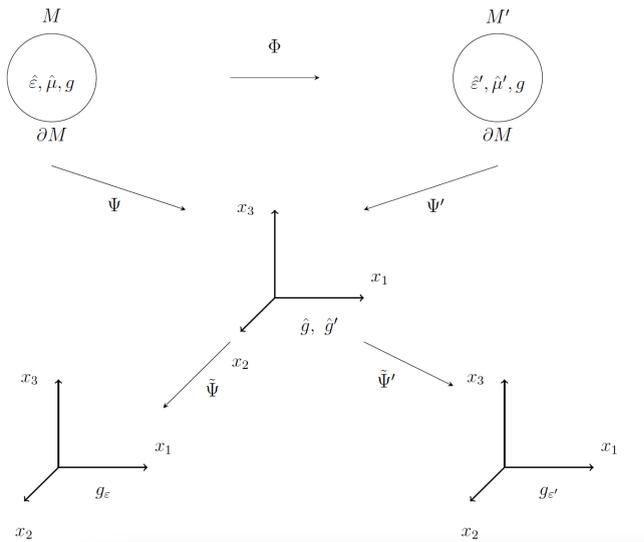}
		\caption{Changes of coordinates used in proof of Theorem \ref{nonuniquenessthm}. Note that the upper part of the diagram commutes (i.e. $\Psi = \Psi' \circ \Phi$).}\label{fig1}
	\end{figure}

	\section{System Decoupling and Factorisation Operators}\label{systemdecouplingsec}
Aiming to decouple the augmented system given in equation \eqref{Peq} for $u_{E}=u_{H}=0$, we decompose $U$ as
	\begin{equation*}
	U=\left(\begin{matrix}U_{H}\\U_{E}\end{matrix}\right),
	\end{equation*}
	where $U_{E}=\left(\begin{matrix} 0\\E\end{matrix}\right)$ and $U_{H}=\left(  \begin{matrix}H\\0\end{matrix}
	\right)$ to obtain
	\begin{equation*}
	\begin{split}
	\left (
	\begin{matrix}
	-i \delta_{\hat{\mu}}& 0\\
	-i \ast_{\hat{\varepsilon}} \mathrm{d}& -i  \mathrm{d}
	\end{matrix}
	\right ) U_H & = - \omega 
	U_E, \quad
	\left (
	\begin{matrix}
	- i \mathrm{d} & i \ast_{\hat{\mu}} \mathrm{d} \\
	0 & -i \delta_{\hat{\varepsilon}}
	\end{matrix}
	\right ) U_E  = - \omega  U_H.
	\end{split}
	\end{equation*}
	Eliminating $U_{E}$ from the above system, we derive the following equation in terms of $H$
	\begin{equation*}
	\left(- \mathrm{d} \delta_{\hat{\mu}}+ \ast_{\hat{\mu}} \mathrm{d} \ast_{\hat{\varepsilon}} \mathrm{d} \right) H - \omega ^{2} H=0.
	\end{equation*}
	which applying the operator $\ast_{\hat{\varepsilon}}\ast_{\hat{\mu}}$ becomes
	\begin{equation*}
L_{H}H=	\left(-\ast_{\hat{\varepsilon}}\ast_{\hat{\mu}} \mathrm{d} \delta_{\hat{\mu}}+ \ast_{\hat{\varepsilon}} \mathrm{d} \ast_{\hat{\varepsilon}} \mathrm{d} \right) H - \omega ^{2} \ast_{\hat{\varepsilon}}\ast_{\hat{\mu}}H=0.
	\end{equation*}
	Or equivalently, in coordinates
	\begin{equation*}
	\begin{split}
	L_{H}H=\frac{\sqrt{\lvert \hat{\mu}^{-1}\rvert}}{\sqrt{\lvert\hat{\varepsilon}^{-1}\rvert}}(\hat{\varepsilon}^{-1})_{lq}\hat{\mu}^{qa}D_{x^{a}}\left(\hat{\mu}^{kb}D_{x^{b}}-i \frac{\partial_{x^{d}}(\sqrt{\lvert \hat{\mu}^{-1}\rvert}\hat{\mu}^{kd})}{\sqrt{\lvert\hat{\mu}^{-1}\rvert}}\right)H_{k}\\-(\hat{\varepsilon}^{-1})_{lq} \frac{\sigma^{aj q}D_{x^{a}}}{\sqrt{\lvert \hat{\varepsilon}^{-1}\rvert }}(\hat{\varepsilon}^{-1})_{bj}\frac{\sigma^{dkb}D_{x^{d}}}{\sqrt{\lvert \hat{\varepsilon}^{-1}\rvert}}H_{k}-\omega^{2} \frac{\sqrt{\lvert \hat{\mu}^{-1}\rvert}}{\sqrt{\lvert \hat{\varepsilon}^{-1}\rvert}}(\hat{\varepsilon}^{-1})_{lq}\hat{\mu}^{qk}H_{k}=0.
	\end{split}
	\end{equation*}
	Following the approach of \cite{Nakamura1997layer}, the next step is to separate the differentiation with respect to $x_{3}$ from the differentiation with respect to $\tilde{x}$ to write the differential operator $L_{H}$ as
	\begin{equation*}
	L_{H}=T_{H}(x)D_{x_{3}}^{2}+A_{H}(x,D_{\tilde{x}}) D_{x_{3}}+ G_{H}(x)D_{x_{3}}+Q_{H}(x,D_{\tilde{x}})+ F_{H}(x,D_{\tilde{x}}) +R_{H}(x),
	\end{equation*}
where the symbols of the operators in the representation of $L_{H}$ are given by
\begin{equation} \label{Tsym}
\begin{split}
(T_{H}(x))_{l}^{k}&=\frac{\sqrt{\lvert \hat{\mu}^{-1}\rvert}}{\sqrt{\lvert \hat{\varepsilon}^{-1}\rvert}}(\hat{\varepsilon}^{-1})_{lq} \hat{\mu}^{q3}\hat{\mu}^{k3}+\left(\delta_{l}^{k}\hat{\varepsilon}^{33}-\delta_{l}^{3}\hat{\varepsilon}^{3k}   \right),\\
\end{split}
\end{equation}
\begin{equation} \label{Asym}
\begin{split}
(A_{H}(x,\tilde{\xi}))_{l}^{k}&= \frac{\sqrt{\lvert \hat{\mu}^{-1}\rvert}}{\sqrt{\lvert \hat{\varepsilon}^{-1}\rvert}}(\hat{\varepsilon}^{-1})_{lq}\left(
\hat{\mu}^{q3}\hat{\mu}^{k\tilde{a}}\xi_{\tilde{a}}+ \hat{\mu}^{q\tilde{a}}\xi_{\tilde{a}}\hat{\mu}^{k3}\right)\\
&\hskip3cm+ 2 \delta_{l}^{k}\langle \nu, \tilde{\xi}\rangle_{\hat{\varepsilon}}- \delta_{l}^{3}\xi_{\tilde{a}}\hat{\varepsilon}^{\tilde{a}k} - \delta_{l}^{\tilde{l}}\xi_{\tilde{l}} \hat{\varepsilon}^{3k} ,
\end{split}
\end{equation}
\begin{equation} \label{Gsym}
\begin{split}
(G_{H}(x))_{l}^{k}&=
-i\frac{\sqrt{\lvert \hat{\mu}^{-1}\rvert }}{\sqrt{\lvert \hat{\varepsilon}^{-1}\rvert }}(\hat{\varepsilon}^{-1})_{lq}\left(\hat{\mu}^{qa}\partial_{x^{a}}\hat{\mu}^{k3} +  \hat{\mu}^{q3}\frac{\partial_{x^{a}}(\sqrt{\lvert \hat{\mu}^{-1}\rvert }\hat{\mu}^{ka})}{\sqrt{\lvert\hat{\mu}^{-1}\rvert}}\right) \\&+ i (\hat{\varepsilon}^{-1})_{lq}\frac{\sigma^{aj q}}{\sqrt{\lvert\hat{\varepsilon}^{-1}\rvert}}\partial_{x^{a}}(\hat{\varepsilon}^{-1})_{\tilde{b}j}\frac{\sigma^{3\tilde{k}\tilde{b}}}{\sqrt{\lvert \hat{\varepsilon}^{-1}\rvert}},
\end{split}
\end{equation}
\begin{equation}\label{Qsym}
\begin{split}
(Q_{H}(x,\tilde{\xi}))_{l}^{k}&=\frac{\sqrt{\lvert\hat{\mu}^{-1}\rvert}}{\sqrt{\lvert \hat{\varepsilon}^{-1}\rvert}}
(\hat{\varepsilon}^{-1})_{lq}\hat{\mu}^{q\tilde{a}}\xi_{\tilde{a}}\hat{\mu}^{k\tilde{b}}\xi_{\tilde{b}}+ \left(\delta_{l}^{k}\lvert\tilde{\xi}\rvert_{\hat{\varepsilon}}^{2}-\delta_{l}^{\tilde{l}}\xi_{\tilde{l}}\xi_{\tilde{a}}\hat{\varepsilon}^{\tilde{a}k}   \right),
\end{split}
\end{equation}
\begin{equation}\label{Fsym}
\begin{split}
(F_{H}(x,\tilde{\xi}))_{l}^{k}&=
-i\frac{\sqrt{\lvert \hat{\mu}^{-1}\rvert}}{\sqrt{\lvert \hat{\varepsilon}^{-1}\rvert}}(\hat{\varepsilon}^{-1})_{lq}\left(\hat{\mu}^{qa}\partial_{x^{a}}\hat{\mu}^{k\tilde{a}}\xi_{\tilde{a}} + \hat{\mu}^{q\tilde{d}}\xi_{\tilde{d}}\frac{\partial_{x^{a}}(\sqrt{\lvert\hat{\mu}^{-1}\rvert}\hat{\mu}^{ka})}{\sqrt{\lvert \hat{\mu}^{-1}\rvert}} \right)\\&+ i (\hat{\varepsilon}^{-1})_{lq}\frac{\sigma^{aj q}}{\sqrt{\lvert \hat{\varepsilon}^{-1}\rvert}}\partial_{x^{a}}(\hat{\varepsilon}^{-1})_{bj}\frac{\sigma^{\tilde{d}kb}}{\sqrt{\lvert \hat{\varepsilon}^{-1}\rvert}}\xi_{\tilde{d}},
\end{split}
\end{equation}
\begin{equation*}
(R_{H}(x))_{l}^{k}= - \omega^{2}\frac{\sqrt{\lvert \hat{\mu}^{-1}\rvert}}{\sqrt{\lvert\hat{\varepsilon}^{-1}\rvert}}(\hat{\varepsilon}^{-1})_{lq}\hat{\mu}^{qk}- (\hat{\varepsilon}^{-1})_{lq}\hat{\mu}^{qb}\partial_{x^{b}}\frac{\partial_{x^{a}}(\sqrt{\lvert\hat{\mu}^{-1}\rvert }\hat{\mu}^{ka})}{\sqrt{\lvert\hat{\mu}^{-1}\rvert}}. 
\end{equation*}
Next we show that $L_H$ can be factorised using pseudodifferential operators. We will make use of the principal symbol of $L_H$ which is
\[
\sigma_p(L_H) = M_{H}(\xi_{3})=T_{H}(x)\xi_{3}^{2}+ A_{H}(x,\tilde{\xi})\xi_{3}+Q_{H}(x,\tilde{\xi}).
\]
$M_H$ is a matrix polynomial with respect to the variable $\xi_3$ in the sense of \cite{Lancaster} and, for fixed $(x,\tilde{\xi})$, the eigenvalues are the solutions of the sixth order polynomial equation $\mathrm{det}(M_{H}(\xi_{3})) = 0$. 
\begin{theorem}\label{thm:LHfactor}
Suppose that we have some local boundary coordinates $\{x^j\}_{j=1}^3$ and let $\Gamma_+(x,\tilde{\xi}) \subset \mathbb{C}$ be a contour in $\mathbb{C}$ enclosing all eigenvlaues of $M_H$ with positive imaginary part. Then, in the domain of the coordinates, there exist pseudodifferential operators $B(x,D_{\tilde{x}})$ and $G_{0}(x, D_{\tilde{x}})$ of order $1$ and $0$, respectively, such that the principal symbol of $B$ is given by
\begin{equation}\label{Bcont}
B^{(1)}(x,\tilde{\xi})= \int_{\Gamma^{+}} \xi_{3} M_{H}(\xi_{3})^{-1} \ d\xi_{3} \left(\int_{\Gamma^{+}}  M_{H}(\xi_{3})^{-1} \ d\xi_{3}\right)^{-1}
\end{equation}
and
\begin{equation}\label{factorizationLH}
	L_{H}= \left(\mathbb{I}D_{x^{3}}- B^{\ast}(x,D_{\tilde{x}})+ G_{0} (x,D_{\tilde{x}}) \right)T_{H}(x)\left(\mathbb{I} D_{x^{3}}- B(x,D_{\tilde{x}}) \right)
\end{equation}
modulo smoothing. Additionally, $B(x,D_{\tilde{x}})$ satisfies the following Riccati equation
\begin{equation}
T_{H}D_{x^{3}}B+ (A_{H}+ G_{H})B+ T_{H}B^{2}+ Q_{H}+ F_{H} + R_{H}=0 \label{RiccatiB}
\end{equation}
and the symbol of $B$ is a smooth function of $\hat{\varepsilon}$ and $\hat{\mu}$.
\end{theorem}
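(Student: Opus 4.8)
The plan is to factorise pointwise in $(x,\tilde{\xi})$ at the level of the principal symbol $M_{H}(\xi_{3})$ and then lift the factorisation to pseudodifferential operators by recursively removing the lower order terms. First I would record the consequences of ellipticity of $L_{H}$: the leading coefficient $T_{H}(x)$ is invertible (the conormal $\nu$ is non-characteristic), and $\det M_{H}(\xi_{3})$ has no real roots when $\tilde{\xi}\neq 0$. Since $\dim M = 3$, proper ellipticity is automatic, so for each $(x,\tilde{\xi})$ with $\tilde{\xi}\neq 0$ the six roots of the sixth order polynomial $\det M_{H}(\xi_{3}) = 0$ split into exactly three in the upper and three in the lower half plane. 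Hence the contour $\Gamma^{+}$ encloses precisely three eigenvalues and the associated spectral subspace is three dimensional.

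Next I would apply the theory of monic matrix polynomials from \cite{Lancaster} to $T_{H}^{-1}M_{H}(\xi_{3}) = \xi_{3}^{2}\mathbb{I} + T_{H}^{-1}A_{H}\xi_{3} + T_{H}^{-1}Q_{H}$. Setting
\[
P_{0} = \int_{\Gamma^{+}} M_{H}(\xi_{3})^{-1}\, d\xi_{3}, \qquad P_{1} = \int_{\Gamma^{+}} \xi_{3}M_{H}(\xi_{3})^{-1}\, d\xi_{3},
\]
the key algebraic step is to show that $P_{0}$ is invertible, which uses the three dimensionality of the $\Gamma^{+}$ spectral subspace, and then to verify that $B^{(1)} = P_{1}P_{0}^{-1}$ is the solvent associated with $\Gamma^{+}$, that is, $T_{H}(B^{(1)})^{2} + A_{H}B^{(1)} + Q_{H} = 0$. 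This produces the pointwise right factorisation $M_{H}(\xi_{3}) = (\xi_{3}\mathbb{I} - W)\,T_{H}\,(\xi_{3}\mathbb{I} - B^{(1)})$ for a matrix $W$ determined by matching the coefficients of $\xi_{3}^{2}$, $\xi_{3}$ and $1$; the symmetry of $M_{H}$ inherited from the Maxwell system identifies the principal part of $W$ with the adjoint of $B^{(1)}$, which is the origin of the factor $\mathbb{I}D_{x^{3}} - B^{\ast} + G_{0}$ at leading order and of the formula \eqref{Bcont} for $B^{(1)}$.

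To obtain \eqref{factorizationLH} I would construct the full symbol of $B$ order by order, requiring that $\mathbb{I}D_{x^{3}} - B$ be an exact right divisor of $L_{H}$ modulo smoothing. Substituting $D_{x^{3}}U = BU$ into $L_{H}U = 0$ and using $D_{x^{3}}^{2}U = (D_{x^{3}}B + B^{2})U$ shows that this divisibility is precisely the Riccati equation \eqref{RiccatiB}; expanding the $\Psi DO$ composition in decreasing order, the term of $B$ of each order is then determined by a Sylvester equation of the form $WX - XB^{(1)} = (\mbox{known lower order data})$, uniquely solvable because the spectra of $B^{(1)}$ and $W$ lie in opposite half planes and are therefore disjoint. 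The left factor, and with it $G_{0}$, is read off from the residual lower order discrepancy, and the construction is completed modulo a smoothing operator. Finally, smooth dependence on $\hat{\varepsilon}$ and $\hat{\mu}$ follows because $T_{H}$, $A_{H}$, $Q_{H}$ and all the lower order symbols in \eqref{Tsym}--\eqref{Fsym} are explicit smooth functions of $\hat{\varepsilon}$, $\hat{\mu}$ and their derivatives, while the integrals $P_{0}$, $P_{1}$ depend smoothly on these parameters since $\Gamma^{+}$ may be kept fixed locally as the eigenvalues vary continuously.

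I expect the main obstacle to be, on the one hand, establishing the invertibility of $P_{0}$ (equivalently the genuine solvent property of $B^{(1)}$), and on the other hand the solvability of the Sylvester equations at every order in the lifting step; both rest on the same structural fact, namely the spectral separation of the upper and lower half plane eigenvalues of $M_{H}$ guaranteed by proper ellipticity.
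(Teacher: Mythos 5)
Your proposal follows essentially the same route as the paper's proof, which sketches the argument by citing the factorisation method of Nakamura--Uhlmann together with Lancaster's theory of monic matrix polynomials: the contour-integral formula \eqref{Bcont} as the spectral right divisor solving $T_{H}(B^{(1)})^{2}+A_{H}B^{(1)}+Q_{H}=0$, the Riccati equation \eqref{RiccatiB} as the divisibility condition from which the lower-order symbol is built recursively and then asymptotically summed, and smooth dependence on $\hat{\varepsilon}$, $\hat{\mu}$ obtained by holding $\Gamma^{+}$ fixed locally. The details you supply beyond the paper's sketch (proper ellipticity giving the $3$--$3$ splitting of roots, invertibility of $P_{0}$, and solvability of the Sylvester equations at each order via spectral separation) are exactly the ingredients the paper delegates to the cited references, so your argument is a correct fleshing-out of the same proof rather than a different one.
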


\begin{proof}
The proof is the same as the proof of \cite[Theorem 2.2]{Nakamura1997layer} which we briefly sketch. First, the Riccati equation \eqref{RiccatiB} can be derived from \eqref{factorizationLH} and the expression of the operator $L_{H}$ as done in the proof of \cite[Theorem 2.2]{Nakamura1997layer}. Looking at the principal symbol of the operators on both sides of \eqref{RiccatiB}, we see that the principal symbol $B^{(1)}$ of $B$ must satisfy
\begin{equation*}
T_{H}(x)(B^{(1)}(x,\tilde{\xi}))^{2}+ A_{H}(x,\tilde{\xi})B^{(1)}(x,\tilde{\xi})+Q_{H}(x,\tilde{\xi})=0
\end{equation*}
and using \cite[Theorem 4.2]{Lancaster} we see that $B^{(1)}$ given by \eqref{Bcont} will satisfy this equation. The remainder of the symbol of $B$ can then be determined by looking at an asymptotic symbol expansion of \eqref{RiccatiB} and this symbol can be asymptotically summed to produce a pseudodifferential operator that satisfies \eqref{RiccatiB} modulo smoothing. Then the symbol for $G_0$ can be determined from \eqref{factorizationLH} in the same way.

The final statement about smoothness of the symbol of $B$ with respect to the parameters $\hat{\varepsilon}$ and $\hat{\mu}$ can be proven by first looking at the principal symbol given by \eqref{Bcont}. Indeed, given $(x,\tilde{\xi})$ a single choice of $\Gamma_+$ will work also in a neighbourhood of $(x,\tilde{\xi})$, and so the regularity with respect to the parameters follows from the regularity of $M_H(\xi_3)^{-1}$ with respect to the same on the contour $\Gamma_+$. The regularity of the rest of the symbol with respect to $\hat{\varepsilon}$ and $\hat{\mu}$ follows from this and the construction of the lower order parts of the symbol as given in \cite{Nakamura1997layer}.
\end{proof}

Next we express the principal symbol $B^{(1)}$ given by \eqref{Bcont} more explicitly. The expression we derive will be used in the calculation of the principal symbols of $\Lambda_{\hat{\varepsilon}}$ and $\Lambda_{\hat{\mu}}$ later.
Our method uses the theory of monic matrix polynomials \cite{Lancaster} to determine the Jordan pairs of $M_{H}(\xi_{3})$ and then express $B^{(1)}(x,\tilde{\xi})$ in terms of them. The theory of monic matrix polynomials is applicable to $M_{H}(\xi_{3})$ as the coefficient $T_{H}(x)$ is invertible.

In order to calculate the Jordan pairs of $M_{H}$, we should find the eigenvalues of $M_H$ and for each eigenvalue $\xi_3$ also find covectors $y_j$ such that 
	\begin{equation*}\label{MHcordinv}
	\sum_{p=0}^{j} \frac{1}{p!}\partial_{\xi_{3}}^{p} M_{H}(\xi_{3})y_{j-p}=0, \quad j=0, \dots k-1
	\end{equation*}
where $k$ is at most the order of $\xi_{3}$ as a root of the equation $\det(M_{H}(\xi_{3}))$=0. For each eigenvalue, the covector $y_{0}$ is called a corresponding eigenvector and the covectors $y_{1},\dots y_{k-1} $ are called generalised eigenvectors. Writing $M_{H}(\xi_{3})$ in coordinates, we have
\begin{equation*}
\left(\frac{\sqrt{\lvert \hat{\mu}^{-1}\rvert}}{\sqrt{\lvert\hat{\varepsilon}^{-1}\rvert}}(\hat{\varepsilon}^{-1})_{lq}\hat{\mu}^{qa}\xi_{a}\hat{\mu}^{kd}\xi_{d}+ \delta_{l}^{k}\lvert \xi\rvert_{\hat{\varepsilon}}^{2}- \xi_{l}\xi_{d}\hat{\varepsilon}^{dk}\right) H_{k}=0,
\end{equation*}
where we have made use of the identity \eqref{identitycofact} for the $\hat{\varepsilon}^{-1}$ metric. Let us now consider the covectors
\begin{equation}\label{cordinvcovectors}
\xi, \quad \zeta=\hat{\varepsilon}^{-1}\hat{\mu}\xi, \quad \chi= \ast_{\hat{\varepsilon}} (\xi \wedge \zeta).
\end{equation}
When these covectors form a basis (see remark \ref{basisremark}), we can write $M_{H}$ in the basis $\{\chi,\xi,\zeta\}$ as follows
\begin{equation*}
[\begin{matrix} \chi & \xi&\zeta\end{matrix}]^{T}\hat{\varepsilon}{M_{H}} [\begin{matrix} \chi & \xi&\zeta\end{matrix}]=\left(\begin{matrix}
\lvert \xi\rvert_{\hat{\varepsilon}}^{2}&0&0\\
0&0&\frac{\sqrt{\lvert \hat{\mu}^{-1}\rvert}}{\sqrt{\lvert\hat{\varepsilon}^{-1}\rvert}}\lvert\xi\rvert_{\hat{\mu}}^{2}\\
0& -\lvert\xi\rvert_{\hat{\mu}}^{2}&\frac{\sqrt{\lvert\hat{\mu}^{-1}\rvert}}{\sqrt{\lvert\hat{\varepsilon}^{-1}\rvert}}\lvert\xi\rvert_{\hat{\mu}\hat{\varepsilon}^{-1}\hat{\mu}}^{2}+\lvert\xi\rvert_{\hat{\varepsilon}}^{2}
\end{matrix}
\right).
\end{equation*}
The determinant of the above vanishes for $\xi_{3}$ satisfying
\begin{equation}\label{eigenvalue}
\lvert \xi\rvert_{\hat{\mu}}^{4} \lvert \xi\rvert_{\hat{\varepsilon}}^{2}=0.
\end{equation}
\begin{remark} \label{innervanish}
Recalling Remark \ref{innerproductremark}, let us point out that the quantities $\lvert \xi\rvert_{\hat{\mu}}^2$ and $ \lvert \xi\rvert_{\hat{\varepsilon}}^2$ can vanish for complex $\xi$ as they are given by the real inner products $\langle \cdot, \cdot \rangle_{\hat{\varepsilon}}$ $\langle \cdot, \cdot \rangle_{\hat{\mu}}$. 
\end{remark}
For fixed $\tilde{\xi}$, the solutions $\xi_{3}$ of \eqref{eigenvalue} are conjugate pairs denoted by ${\xi_{\hat{\mu}}}_{3}$, $\overline{\xi_{\hat{\mu}}}_{3}$ and ${\xi_{\hat{\varepsilon}}}_{3}$, $\overline{\xi_{\hat{\varepsilon}}}_{3}$, respectively. The values of $\xi_{3}$ with positive imaginary part denoted by ${\xi_{\hat{\mu}}}_{3}$ and ${\xi_{\hat{\varepsilon}}}_{3}$ are given by, respectively
\begin{equation}\label{xihatmu3}
{\xi_{\hat{\mu}}}_{3}=- \frac{\langle \nu,\tilde{\xi}\rangle_{\hat{\mu}}}{\hat{\mu}^{33}}+ i \frac{\lvert\tilde{\xi}\rvert_{\tilde{\mu}}}{\sqrt{\hat{\mu}^{33}}},
\end{equation}
\begin{equation}\label{xihatepsilon3}
{\xi_{\hat{\varepsilon}}}_{3}=- \frac{\langle \nu,\tilde{\xi}\rangle_{\hat{\varepsilon}}}{\hat{\varepsilon}^{33}}+ i \frac{\lvert\tilde{\xi}\rvert_{\tilde{\varepsilon}}}{\sqrt{\hat{\varepsilon}^{33}}},
\end{equation}
where we have made use of the representations of $\hat{\varepsilon}$ and $\hat{\mu}$ given in equations \eqref{epsilonepsilon'tilde}, \eqref{mumu'tilde}. We notice that ${\xi_{\hat{\mu}}}_{3}$ is a double root of $\det(M_{H}(\xi_{3}))$ and we expect an eigenvector and a generalised eigenvector associated with this value of $\xi_{3}$.
The eigenvector corresponding to ${\xi_{\hat{\mu}}}_{3}$ will be denoted by $\xi_{\hat{\mu}}$ and is given by
\begin{equation*}
\xi_{\hat{\mu}}= \xi \vert_{{\xi_{3}=\xi_{\hat{\mu}}}_{3}}.
\end{equation*}
 The generalised eigenvector is given by
	\begin{equation*}
\gamma_{\hat{\mu}}= \nu + m_{\hat{\mu}} \zeta_{\hat{\mu}},
\end{equation*}
where
\begin{equation*}
\zeta_{\hat{\mu}}=\zeta\vert_{{\xi_{3}=\xi_{\hat{\mu}}}_{3}} \quad \mbox{and} \quad     m_{\hat{\mu}}=  -  \frac{2\frac{\sqrt{\lvert \hat{\mu}^{-1}\rvert}}{\sqrt{\lvert\hat{\varepsilon}^{-1}\rvert}}\langle \nu,\xi_{\hat{\mu}}\rangle_{\hat{\mu}}}{\frac{\sqrt{\lvert\hat{\mu}^{-1}\rvert}}{\sqrt{\lvert\hat{\varepsilon}^{-1}\rvert}}\lvert\xi_{\hat{\mu}}\rvert_{\hat{\mu}\hat{\varepsilon}^{-1}\hat{\mu}}^{2}+ \lvert\xi_{\hat{\mu}}\rvert_{\hat{\varepsilon}}^{2}}.
\end{equation*}
Proceeding to the eigenvalue ${\xi_{\hat{\varepsilon}}}_{3}$, as it is a single root of $\det(M_{H}(\xi_{3}))$, it corresponds to the eigenvector
$\chi_{\hat{\varepsilon}}$, defined by
\begin{equation*}
\chi_{\hat{\varepsilon}}= \chi \vert_{\xi_3 ={\xi_{\hat{\varepsilon}}}_{3}}.
\end{equation*}
Therefore, the Jordan pairs and blocks of the matrix polynomial $M_{H}(\xi_{3})$ are given by
\begin{equation*}
Y_{\hat{\varepsilon}}= \chi_{\hat{\varepsilon}} , \quad J_{\hat{\varepsilon}}=[{\xi_{\hat{\varepsilon}}}_{3}], \quad \mbox{and} \quad 	Y_{\hat{\mu}}= \left[\begin{matrix}\xi_{\hat{\mu}}&\gamma_{\hat{\mu}} \end{matrix} \right], \quad J_{\hat{\mu}}= \left( \begin{matrix}
{\xi_{\hat{\mu}}}_{3}&1\\
0&{\xi_{\hat{\mu}}}_{3}
\end{matrix} \right).
\end{equation*}
Using the Jordan pairs in the above, the principal symbol of the pseudodifferential operator $B(x,D_{\tilde{x}})$ is expressed as
\begin{equation}\label{BJordan}
B^{(1)}(x,\tilde{\xi})= X_{H}J_{H}X_{H}^{-1},
\end{equation}
where
\begin{equation*}
J_{H}= \left(\begin{matrix}
{\xi_{\hat{\varepsilon}}}_{3}&0&0\\
0&{\xi_{\hat{\mu}}}_{3}&\lvert\xi_{\hat{\mu}}\rvert_{\hat{\varepsilon}}\\
0&0&{\xi_{\hat{\mu}}}_{3}
\end{matrix}
\right)\quad \mbox{and} \quad
X_{H}= \left[\begin{matrix}
\hat{\chi}_{\hat{\varepsilon}}& \hat{\xi}_{\hat{\mu}}& \gamma_{\hat{\mu}}
\end{matrix}\right]
\end{equation*}
and $\hat{\chi}_{\hat{\varepsilon}}$, $\hat{\xi}_{\hat{\mu}}$ denote that the covectors $\chi_{\hat{\varepsilon}}$ and $\xi_{\hat{\mu}}$ are normalised in such a way that $X_{H}$ $\in$ $\Psi DO^{(0)}$.
Additionally, $M_{H}(\xi_{3})$ can be factorised in terms of $B^{(1)}(x,\tilde{\xi})$ as 
\begin{equation}\label{Mfactorization}
M_{H}(\xi_{3})=\left(\mathbb{I}\xi_{3}-(B^{\ast})^{(1)}(x,\tilde{\xi})\right)T_{H}(x)\left(\mathbb{I}\xi_{3} -B^{(1)}(x,\tilde{\xi})\right),
\end{equation}
where in order to derive the above factorisation we use the fact that 	$T_{H}^{1/2}B^{(1)}T_{H}^{-1/2}$, $T_{H}^{1/2}(B^{\ast})^{(1)}T_{H}^{-1/2}$ are spectral right and left divisors of $T_{H}^{-1/2}M_{H}(\xi_{3})T_{H}^{-1/2}$, respectively. Factorising $M_{H}(\xi_{3})$, we can follow the method of \cite{Nakamura1997layer} to obtain the factorisation of $L_{H}$ provided in \eqref{factorizationLH}, where the principal symbol of $B(x,D_{\tilde{x}})$ is $B^{(1)}(x,\tilde{\xi})$.
\begin{remark}\label{basisremark}
	We observe that when ${\xi_{\hat{\varepsilon}}}_{3}={\xi_{\hat{\mu}}}_{3}$, the covectors $\xi$, $\chi$ and $\zeta$ do not constitute a basis and the representation \eqref{BJordan} does not apply. However, using the continuity of the symbol $B(x,\tilde{\xi})$ with respect to $\hat{\varepsilon}$ and $\hat{\mu}$ the various symbol expressions for the impedance and other maps derived below are still valid even when ${\xi_{\hat{\varepsilon}}}_{3}={\xi_{\hat{\mu}}}_{3}$.
\end{remark}

Next, we state the following result which will be used in the calculation of the principal symbol of the impedance map. The proof follows the proof of \cite[Proposition 2]{Mcdowall1997}.
\begin{theorem}\label{Dx3Htheorem}
	If $H$ is the solution of the Dirichlet problem consisting of \eqref{Maxhat}, then
	\begin{equation*}
	D_{x_{3}}H=B(x,D_{\tilde{x}})H,\ \mbox{modulo smoothing}, \ \mbox{in a set of boundary coordinates}\ \mbox{in} \  M.
	\end{equation*}
\end{theorem}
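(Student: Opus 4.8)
The plan is to adapt the argument in the proof of \cite[Proposition 2]{Mcdowall1997}, exploiting the factorisation of $L_H$ established in Theorem \ref{thm:LHfactor}. Since $H$ solves \eqref{Maxhat}, it also solves the decoupled equation $L_H H = 0$. The operator $L_H$ is elliptic: by \eqref{eigenvalue} its principal symbol $M_H(\xi_3)$ is singular only where $\lvert\xi\rvert_{\hat{\mu}}^2$ or $\lvert\xi\rvert_{\hat{\varepsilon}}^2$ vanishes, which is impossible for real $\xi \neq 0$ because $\hat{\varepsilon}$ and $\hat{\mu}$ are Riemannian. Hence $H$ is smooth in the interior of $M$, and by Theorem \ref{wellposednessthm} we have $H \in H^{1}(\Omega^{1}(M))$. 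Working in a collar neighbourhood of $\partial M$ in boundary coordinates, it therefore suffices to show that $w := \left(\mathbb{I}D_{x^{3}} - B(x,D_{\tilde{x}})\right)H$ is smooth up to $\{x_{3}=0\}$, since this is precisely the assertion $D_{x_{3}}H = B(x,D_{\tilde{x}})H$ modulo smoothing.

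Applying the factorisation \eqref{factorizationLH} to $H$ and using $L_H H = 0$, the quantity $f := \left(\mathbb{I}D_{x^{3}} - B^{\ast}(x,D_{\tilde{x}}) + G_{0}(x,D_{\tilde{x}})\right)(T_{H}w)$ is smooth. Setting $v := T_{H}w$ and recalling that $T_{H}$ is an invertible matrix (so that $v$ is smooth if and only if $w$ is, and $v$ is smooth in the interior because $w$ is), the problem reduces to the following: $v$ satisfies the first-order equation $\left(\mathbb{I}D_{x^{3}} - B^{\ast} + G_{0}\right)v = f$ with $f$ smooth and $v$ smooth in $\{x_{3}>0\}$, and we must upgrade this to smoothness up to $\{x_{3}=0\}$.

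The decisive point is the location of the roots. By \eqref{Mfactorization} and the Jordan decomposition \eqref{BJordan}, the principal symbol $\mathbb{I}\xi_{3} - (B^{\ast})^{(1)}(x,\tilde{\xi})$ of this factor has eigenvalues equal to the roots of $\det M_H$ lying in the \emph{lower} half-plane, namely the conjugates $\overline{\xi_{\hat{\mu}}}_{3}$ and $\overline{\xi_{\hat{\varepsilon}}}_{3}$. As a pseudodifferential operator in all three variables this factor is elliptic for real $(\tilde{\xi},\xi_{3}) \neq 0$, so interior regularity of $v$ is automatic and the content lies entirely at the boundary. Rewriting the equation as an evolution $\partial_{x^{3}}v = i(B^{\ast} - G_{0})v + i f$ and solving by Duhamel's formula with the integral running from the interior back toward $\{x_{3}=0\}$, the associated propagator $e^{i(B^{\ast})^{(1)}(x^{3}-s)}$ is exponentially decaying for $x^{3} < s$ exactly because $(B^{\ast})^{(1)}$ has spectrum in the lower half-plane. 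The resulting integral converges and is smooth up to $x_{3}=0$, and matching it against the interior values of $v$ shows that $v$, and hence $w$, is smooth up to the boundary. This gives $D_{x_{3}}H = B(x,D_{\tilde{x}})H$ modulo smoothing.

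The main obstacle is this last step: turning the heuristic Duhamel/propagation picture into a rigorous one-sided parametrix construction for the factor $\mathbb{I}D_{x^{3}} - B^{\ast} + G_{0}$ on the half-space. This requires controlling the lower-order and tangentially pseudodifferential contributions in $B^{\ast}$ and $G_{0}$ (so that the propagator is genuinely built as an operator rather than a scalar exponential) and verifying that the stable, interior-to-boundary integration never picks up the growing modes associated with the upper half-plane roots. Once the spectral splitting of Theorem \ref{thm:LHfactor} is in hand, this is exactly the regularity-up-to-the-boundary step carried out in \cite[Proposition 2]{Mcdowall1997}, and the construction transfers with $M_{H}$ in place of the scalar symbol used there.
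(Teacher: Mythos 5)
Your proposal is correct and follows essentially the same route as the paper, whose entire proof is a deferral to \cite[Proposition 2]{Mcdowall1997}: that reference's argument is exactly the scheme you reconstruct, namely applying the factorisation \eqref{factorizationLH} to reduce to showing $w=(\mathbb{I}D_{x^{3}}-B)H$ is smooth up to the boundary, and then using that the residual factor $\mathbb{I}D_{x^{3}}-B^{\ast}+G_{0}$ has principal-symbol spectrum in the lower half-plane, so the interior-to-boundary Duhamel/one-sided parametrix is smoothing. The technical steps you flag at the end (operator-valued propagator, control of lower-order terms) are precisely what that cited proposition supplies, so deferring them to it is consistent with what the paper itself does.
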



The corresponding factorisation operator obtained by decoupling the system in terms of the electric field $E$ and following similar reasoning to the above, will be denoted by $C(x,D_{\tilde{x}})$. The pseudodifferential operator $C(x,D_{\tilde{x}})$ satisfies the Riccati equation
\begin{equation}
T_{E}D_{x^{3}}C+ (A_{E}+ G_{E})C+ T_{E}C^{2}+ Q_{E}+ F_{E} + R_{E}=0, \label{RiccatiC}
\end{equation}
where $T_{E}$, $A_{E}$, $G_{E}$, $Q_{E}$, $F_{E}$, $R_{E}$ are obtained from $T_{H}$, $A_{H}$, $G_{H}$, $Q_{H}$, $F_{H}$, $R_{H}$ by exchanging $\hat{\varepsilon}$ with $\hat{\mu}$.
The principal symbol of $C(x,D_{\tilde{x}})$ is provided by
\begin{equation} \label{CJordan}
C^{(1)}(x,\tilde{\xi})= X_{E}J_{E}X_{E}^{-1},
\end{equation}
where
\begin{equation*}
J_{E}= \left(\begin{matrix}
{\xi_{\hat{\mu}}}_{3}&0&0\\
0&{\xi_{\hat{\varepsilon}}}_{3}&\lvert \xi_{\hat{\varepsilon}}\rvert_{\hat{\mu}}\\
0&0&{\xi_{\hat{\varepsilon}}}_{3}
\end{matrix}
\right)\quad \mbox{and} \quad
X_{E}= \left[\begin{matrix}
\hat{\chi}_{\hat{\mu}}& \hat{\xi}_{\hat{\varepsilon}}& \gamma_{\hat{\varepsilon}}
\end{matrix}\right]
\end{equation*}
and the covectors $\chi_{\hat{\mu}}$, $\xi_{\hat{\varepsilon}}$ and $\gamma_{\hat{\varepsilon}}$ are obtained from $\chi_{\hat{\varepsilon}}$, $\xi_{\hat{\mu}}$ and $\gamma_{\hat{\mu}}$ by exchanging $\hat{\mu}$ with $\hat{\varepsilon}$.

Viewing the solution $H$ of the Dirichlet problem involving equations \eqref{Maxhat} and \eqref{pullbackBCs} as a pseudodifferential operator acting on the boundary data $F$ and motivated by the expression of $B^{(1)}(x,\tilde{\xi})$, the principal symbol of $H$ in the basis $\{\chi_{\hat{\varepsilon}},\xi_{\hat{\mu}},\gamma_{\hat{\mu}}\}$ is written as
\begin{equation} \label{H0decomp}
H_{k}^{(0)}= a_{\hat{\varepsilon}}{{\chi}_{\hat{\varepsilon}}}_{k}+ b_{\hat{\varepsilon}}{{\xi}_{\hat{\mu}}}_{k},
\end{equation}
	where
\begin{equation}\label{abepsilonconst}
a_{\hat{\varepsilon}}= \frac{\ast_{\iota^{\ast}\hat{\varepsilon}}(\tilde{\xi}\wedge F)}{\langle \nu_{\hat{\varepsilon}},\xi_{\hat{\varepsilon}}\rangle_{\hat{\varepsilon}} \lvert\xi_{\hat{\varepsilon}}\rvert_{\hat{\mu}}^{2}}, \quad b_{\hat{\varepsilon}}= \frac{\ast_{\iota^{\ast}\hat{\varepsilon}}(F\wedge\chi_{\hat{\varepsilon}})}{\langle \nu_\epsilon,\xi_{\hat{\varepsilon}}\rangle_{\hat{\varepsilon}} \lvert\xi_{\hat{\varepsilon}}\rvert_{\hat{\mu}}^{2}}.
\end{equation}
Here $\ast_{\iota^{\ast}\hat{\varepsilon}}$ denotes the Hodge star operator with respect to the $\hat{\varepsilon}$ metric pulled back to $\partial M$ and $\nu_{\hat{\varepsilon}}$ is the normal covector at $\partial M$ with respect to the $\hat{\varepsilon}$ metric. This decomposition of the principal symbol of $H$ is implied since it has to vanish on the principal symbol of the divergence equation $\xi_{\tilde{a}}\hat{\mu}^{\tilde{a}b}H_{b}^{(0)}+ \hat{\mu}^{3a}(B^{(1)}(x,\tilde{\xi}))_{a}^{b}H_{b}^{(0)}=0$. Similarly, the principal symbol of $E$ seen as a pseudodifferential operator acting on the boundary data $G$ is given by
	\begin{equation*}
E_{k}^{(0)}= a_{\hat{\mu}}{{\chi}_{\hat{\mu}}}_{k}+ b_{\hat{\mu}}{{\xi}_{\hat{\varepsilon}}}_{k},
\end{equation*}
where $a_{\hat{\mu}}$ and $b_{\hat{\mu}}$ are obtained from $a_{\hat{\varepsilon}}$, $b_{\hat{\varepsilon}}$ by exchanging $\hat{\varepsilon}$ with $\hat{\mu}$ and replacing $F$ by $G$.

	\section{Principal Symbols of Boundary Mappings and Boundary Recovery}\label{boundaryrecoverysec}
	
	\begin{theorem}\label{impedanceprincsymbol}
		The principal symbol of the impedance map $\Lambda_{\hat{\varepsilon}}: \iota^{\ast}H \mapsto \iota^{\ast}E$ is given by
		\begin{equation*}
		\lambda_{\hat{\varepsilon}}^{(1)}(F) = -\frac{\tilde{\xi} \ \ast_{\iota^{\ast}\hat{\varepsilon}}(\tilde{\xi}\wedge F  ) }{\omega \langle \nu_{\hat{\varepsilon}}, \xi_{\hat{\varepsilon}}\rangle_{\hat{\varepsilon}}}.
		\end{equation*}
	\end{theorem}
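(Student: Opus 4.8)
The plan is to read $\iota^\ast E$ off the first equation in \eqref{Maxhat}, which rearranges to $E=\frac{i}{\omega}\ast_{\hat\varepsilon}\mathrm{d}H$, and then to compute the principal symbol of $\iota^\ast(\ast_{\hat\varepsilon}\mathrm{d}H)$ regarded as a pseudodifferential operator acting on the boundary data $F=\iota^\ast H$. Viewed this way $H$ has order $0$ with principal symbol $H^{(0)}$ given by \eqref{H0decomp}, so $\mathrm{d}$ raises the order to $1$ and the object to extract is the order-one symbol $\lambda_{\hat\varepsilon}^{(1)}$; everything below is understood modulo lower order terms.

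First I would compute the principal symbol of $\mathrm{d}H$ by separating tangential from normal differentiation, writing $\mathrm{d}H=\partial_{x^{\tilde j}}H_k\,dx^{\tilde j}\wedge dx^k+\partial_{x^3}H_k\,dx^3\wedge dx^k$. With $\partial=iD$ the tangential part contributes $i\,\tilde\xi\wedge H^{(0)}$ at the symbol level. The normal derivative is the crucial input: because $H$ solves the Dirichlet problem, Theorem \ref{Dx3Htheorem} allows the replacement $D_{x^3}H=B(x,D_{\tilde x})H$, so $\partial_{x^3}H$ contributes $i\,dx^3\wedge(B^{(1)}H^{(0)})$, where $B^{(1)}$ is the principal symbol \eqref{BJordan}. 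Hence the order-one symbol of $\mathrm{d}H$ is $i\big(\tilde\xi\wedge H^{(0)}+dx^3\wedge B^{(1)}H^{(0)}\big)$.

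Next I would exploit the eigenstructure recorded in \eqref{BJordan}. Since $H^{(0)}=a_{\hat\varepsilon}\chi_{\hat\varepsilon}+b_{\hat\varepsilon}\xi_{\hat\mu}$ is built from the two genuine eigenvectors $\chi_{\hat\varepsilon}$ and $\xi_{\hat\mu}$ of $B^{(1)}$, with eigenvalues ${\xi_{\hat\varepsilon}}_3$ and ${\xi_{\hat\mu}}_3$, the operator $B^{(1)}$ simply rescales each summand. Recombining with the tangential $\tilde\xi$ and using $\tilde\xi+{\xi_{\hat\varepsilon}}_3\nu=\xi_{\hat\varepsilon}$ and $\tilde\xi+{\xi_{\hat\mu}}_3\nu=\xi_{\hat\mu}$, the symbol telescopes to $a_{\hat\varepsilon}\,\xi_{\hat\varepsilon}\wedge\chi_{\hat\varepsilon}+b_{\hat\varepsilon}\,\xi_{\hat\mu}\wedge\xi_{\hat\mu}$, and the second term vanishes identically because $\xi_{\hat\mu}\wedge\xi_{\hat\mu}=0$. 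I expect this cancellation to be the delicate point, since it hinges on matching each eigenvalue with the correctly reassembled covector in its Jordan direction; one must also invoke Remark \ref{basisremark} to cover the degenerate case ${\xi_{\hat\varepsilon}}_3={\xi_{\hat\mu}}_3$ by continuity of the symbol.

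Finally I would apply $\ast_{\hat\varepsilon}$ and pull back. Using $\chi_{\hat\varepsilon}=\ast_{\hat\varepsilon}(\xi_{\hat\varepsilon}\wedge\zeta_{\hat\varepsilon})$ from \eqref{cordinvcovectors} and the three-dimensional Hodge-star identity $\ast_{\hat\varepsilon}\big(a\wedge\ast_{\hat\varepsilon}(a\wedge c)\big)=\langle a,c\rangle_{\hat\varepsilon}\,a-\langle a,a\rangle_{\hat\varepsilon}\,c$, together with $\langle\xi_{\hat\varepsilon},\zeta_{\hat\varepsilon}\rangle_{\hat\varepsilon}=\lvert\xi_{\hat\varepsilon}\rvert_{\hat\mu}^2$ (since $\zeta=\hat\varepsilon^{-1}\hat\mu\xi$) and the defining relation $\lvert\xi_{\hat\varepsilon}\rvert_{\hat\varepsilon}^2=0$ from \eqref{eigenvalue} (this is exactly the vanishing flagged in Remark \ref{innervanish}), I obtain $\ast_{\hat\varepsilon}(\xi_{\hat\varepsilon}\wedge\chi_{\hat\varepsilon})=\lvert\xi_{\hat\varepsilon}\rvert_{\hat\mu}^2\,\xi_{\hat\varepsilon}$. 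Combining the two factors of $i$ then gives the order-one symbol $E^{(1)}=-\frac{1}{\omega}a_{\hat\varepsilon}\lvert\xi_{\hat\varepsilon}\rvert_{\hat\mu}^2\,\xi_{\hat\varepsilon}$; pulling back annihilates the normal part, since $\iota^\ast dx^3=0$ forces $\iota^\ast\xi_{\hat\varepsilon}=\tilde\xi$, and substituting the value of $a_{\hat\varepsilon}$ from \eqref{abepsilonconst} cancels the factor $\lvert\xi_{\hat\varepsilon}\rvert_{\hat\mu}^2$ to produce precisely $-\frac{\tilde\xi\,\ast_{\iota^\ast\hat\varepsilon}(\tilde\xi\wedge F)}{\omega\langle\nu_{\hat\varepsilon},\xi_{\hat\varepsilon}\rangle_{\hat\varepsilon}}$, as claimed.
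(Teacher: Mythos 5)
Your proposal is correct and follows essentially the same route as the paper's proof: both start from $\ast_{\hat{\varepsilon}}\mathrm{d}H=-i\omega E$, substitute $D_{x_3}H=B(x,D_{\tilde{x}})H$ via Theorem \ref{Dx3Htheorem}, use the decomposition \eqref{H0decomp} together with the eigenstructure of $B^{(1)}$ so that the $\xi_{\hat{\mu}}$ part drops out (your $\xi_{\hat{\mu}}\wedge\xi_{\hat{\mu}}=0$ is precisely the paper's ``kernel'' observation), and finish by substituting $a_{\hat{\varepsilon}}$ from \eqref{abepsilonconst}. The only difference is presentational: where the paper evaluates the surviving $\chi_{\hat{\varepsilon}}$ contribution in coordinates via the cofactor identity \eqref{identitycofact}, you do the same contraction invariantly with the triple-product Hodge identity, $\lvert\xi_{\hat{\varepsilon}}\rvert_{\hat{\varepsilon}}^{2}=0$ and $\langle\xi_{\hat{\varepsilon}},\zeta_{\hat{\varepsilon}}\rangle_{\hat{\varepsilon}}=\lvert\xi_{\hat{\varepsilon}}\rvert_{\hat{\mu}}^{2}$, which yields the identical result.
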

	\begin{proof}
		Let us consider the Maxwell's equation
		\begin{equation*}
		\ast_{\hat{\varepsilon}} \mathrm{d}H= -i \omega E,
		\end{equation*}
		which taking the pullback and considering the principal symbol in coordinates gives
		\begin{equation*}
	\lambda_{\hat{\varepsilon}}^{(1)}(F)	 =-\frac{1}{\omega}(\hat{\varepsilon}^{-1})_{\tilde{l}b}\frac{\sigma^{\tilde{a}kb}}{\sqrt{\lvert\hat{\varepsilon}^{-1}\rvert}} \xi_{\tilde{a}}H_{k}^{(0)} -\frac{1}{\omega}(\hat{\varepsilon}^{-1})_{\tilde{l}\tilde{b}}\frac{\sigma^{3\tilde{a}\tilde{b}}}{\sqrt{\lvert\hat{\varepsilon}^{-1}\rvert}}(B^{(1)}(x,\tilde{\xi}))_{\tilde{a}}^{k} H_{k}^{(0)}. \label{curlHsymb}
		\end{equation*}
Observing that $\xi_{\hat{\mu}}$ in the decomposition \eqref{H0decomp} of $H^{(0)}$ is in the kernel of the right hand-side of the above equation, only the covector $\chi_{\hat{\varepsilon}}$ contributes to the principal symbol of $\Lambda_{\hat{\varepsilon}}$
		as follows
		\begin{equation*}
	\lambda_{\hat{\varepsilon}}^{(1)}(F) = - a_{\hat{\varepsilon}}\frac{1}{\omega } (\hat{\varepsilon}^{-1})_{\tilde{l}b} \frac{\sigma^{akb}}{\sqrt{\lvert\hat{\varepsilon}^{-1}\rvert}}{\xi_{\hat{\varepsilon}}}_{a}{\chi_{\hat{\varepsilon}}}_{k}= - \frac{a_{\hat{\varepsilon}}}{\omega }{\xi}_{\tilde{l}}\lvert\xi_{\hat{\varepsilon}}\rvert_{\hat{\mu}}^{2},
		\end{equation*}
		where we have used the identity \eqref{identitycofact} for the $\hat{\varepsilon}^{-1}$ metric.
		Substituting the expression \eqref{abepsilonconst} for $a_{\hat{\varepsilon}}$ we obtain the result.
	\end{proof}
Working similarly to the above we can calculate the principal symbol of the admittance map as stated in the following theorem.
	\begin{theorem}\label{admittanceprincsymbol}
	The principal symbol of the admittance map $\Lambda_{\hat{\mu}}: \iota^{\ast}E \mapsto \iota^{\ast}H$ is given by
	\begin{equation*}
	\lambda_{\hat{\mu}}^{(1)}(G)= \frac{\tilde{\xi} \ \ast_{\iota^{\ast}\hat{\mu}}(\tilde{\xi}\wedge G )  }{\omega\langle \nu_{\hat{\mu}}, \xi_{\hat{\mu}}\rangle_{\hat{\mu}}}.
	\end{equation*}
\end{theorem}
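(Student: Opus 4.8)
The plan is to mirror the proof of Theorem~\ref{impedanceprincsymbol}, exploiting the symmetry of the system \eqref{Maxhat} under the exchange $\hat{\varepsilon} \leftrightarrow \hat{\mu}$, $H \leftrightarrow E$, $F \leftrightarrow G$, together with the replacement of the factorisation operator $B(x,D_{\tilde{x}})$ by $C(x,D_{\tilde{x}})$. Concretely, I would start from the Maxwell equation $\ast_{\hat{\mu}} \mathrm{d}E = i \omega H$ in place of $\ast_{\hat{\varepsilon}}\mathrm{d}H = -i\omega E$, solve for $\iota^{\ast}H$, and take its principal symbol in boundary coordinates. To handle the normal derivative $D_{x_3}E$ appearing once $\mathrm{d}E$ is written in coordinates, I would invoke the analogue of Theorem~\ref{Dx3Htheorem} for the electric field, namely $D_{x_3}E = C(x,D_{\tilde{x}})E$ modulo smoothing, which holds by running the decoupling and factorisation argument of Section~\ref{systemdecouplingsec} on $L_E$ and the Riccati equation \eqref{RiccatiC} rather than on $L_H$.

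Next I would substitute the principal symbol decomposition of $E$, that is $E_k^{(0)} = a_{\hat{\mu}} {\chi_{\hat{\mu}}}_k + b_{\hat{\mu}} {\xi_{\hat{\varepsilon}}}_k$, into the resulting symbol expression. Exactly as in the impedance case, where $\xi_{\hat{\mu}}$ lay in the kernel of the relevant operator, here the covector $\xi_{\hat{\varepsilon}}$ is annihilated, so only the $\chi_{\hat{\mu}}$ term survives and the symbol reduces to the single coefficient $a_{\hat{\mu}}$ multiplied by a contraction of the alternating tensor against ${\xi_{\hat{\mu}}}$ and ${\chi_{\hat{\mu}}}$. Applying the cofactor identity \eqref{identitycofact} for the $\hat{\mu}^{-1}$ metric collapses this contraction to $\tilde{\xi}\,\lvert\xi_{\hat{\mu}}\rvert_{\hat{\varepsilon}}^{2}$, after which the factor $\lvert\xi_{\hat{\mu}}\rvert_{\hat{\varepsilon}}^{2}$ cancels against the same factor in the denominator of $a_{\hat{\mu}} = \ast_{\iota^{\ast}\hat{\mu}}(\tilde{\xi}\wedge G)\big/\big(\langle \nu_{\hat{\mu}}, \xi_{\hat{\mu}}\rangle_{\hat{\mu}}\lvert\xi_{\hat{\mu}}\rvert_{\hat{\varepsilon}}^{2}\big)$, yielding the stated formula.

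The step requiring the most care is the sign. Whereas the impedance map arises from $\ast_{\hat{\varepsilon}}\mathrm{d}H = -i\omega E$, giving the prefactor $-1/\omega$, the admittance map arises from $\ast_{\hat{\mu}}\mathrm{d}E = +i\omega H$; this flip of $\mp i\omega$ is precisely what produces the opposite overall sign in $\lambda_{\hat{\mu}}^{(1)}$ relative to $\lambda_{\hat{\varepsilon}}^{(1)}$. I would therefore track the factor of $i$ arising from $1/(i\omega) = -i/\omega$ carefully through the pullback, since it is the only genuine asymmetry between the two computations; everything else follows verbatim from the $\hat{\varepsilon} \leftrightarrow \hat{\mu}$ symmetry established in Section~\ref{systemdecouplingsec}.
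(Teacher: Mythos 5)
Your proposal is correct and follows essentially the same route as the paper: the paper offers no separate argument for Theorem \ref{admittanceprincsymbol}, stating only that it is obtained ``working similarly'' to Theorem \ref{impedanceprincsymbol}, i.e.\ precisely the $\hat{\varepsilon}\leftrightarrow\hat{\mu}$, $H\leftrightarrow E$, $B\leftrightarrow C$ symmetry you spell out. Your identification of the only genuine asymmetry --- the sign flip coming from $\ast_{\hat{\mu}}\mathrm{d}E=+i\omega H$ versus $\ast_{\hat{\varepsilon}}\mathrm{d}H=-i\omega E$, and the annihilation of the $\xi_{\hat{\varepsilon}}$ component in place of $\xi_{\hat{\mu}}$ --- is exactly right.
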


\subsection{Proof of Theorem \ref{metricsequalinbncsthm}} \label{metricsequalinbncsec}

From the fact that $\lambda_{\hat{\varepsilon}}^{(1)}=\lambda_{\hat{\varepsilon}'}^{(1)}$  and using Theorem \ref{impedanceprincsymbol}, we get
	\begin{equation*}
	\sqrt{\lvert\tilde{\varepsilon}^{-1}\rvert}\langle \nu_{\hat{\varepsilon}},\xi_{\hat{\varepsilon}}\rangle_{\hat{\varepsilon}}	=\sqrt{\lvert\tilde{\varepsilon}'^{-1}\rvert}\langle \nu_{\hat{\varepsilon}'},\xi_{\hat{\varepsilon}'}\rangle_{\hat{\varepsilon}'}, \quad \mbox{at} \ x_{3}=0,
	\end{equation*}
which, from \eqref{xihatepsilon3}, leads to
	\begin{equation*}
	\sqrt{\lvert\tilde{\varepsilon}^{-1}\rvert} \lvert \tilde{\xi}\rvert_{\tilde{\varepsilon}} = \sqrt{\lvert\tilde{\varepsilon}'^{-1}\rvert} \lvert \tilde{\xi}\rvert_{\tilde{\varepsilon}'}, \quad \mbox{at} \ x_{3}=0.
	\end{equation*}
	This implies that
	\begin{equation}
	\tilde{\varepsilon}=\tilde{\varepsilon}' \quad \mbox{at} \ x_{3}=0.
	\end{equation}
	 Working in the same way for $\lambda_{\hat{\mu}}^{(1)}=\lambda_{\hat{\mu}'}^{(1)}$ we reach the corresponding result for $\tilde{\mu}$ and $\tilde{\mu}'$ which completes the proof of Theorem \ref{metricsequalinbncsthm}.
	 
		The result of Theorem \ref{metricsequalinbncsthm} in terms of the metrics $(\varepsilon^{\sharp},\mu^{\sharp})$, $(\varepsilon'^{\sharp},\mu'^{\sharp})$ translates to
		\begin{equation*}
		\lvert g \rvert	\varepsilon^{\tilde{i}\tilde{j}}=\lvert g'\rvert \varepsilon'^{\tilde{i}\tilde{j}}, \quad \mbox{at} \ x_{3}=0,
		\end{equation*}
		in boundary normal coordinates for $\varepsilon^{\sharp}$/ $\varepsilon'^{\sharp}$ and
			\begin{equation*}
		\lvert g \rvert	\mu^{\tilde{i}\tilde{j}}=\lvert g'\rvert \mu'^{\tilde{i}\tilde{j}}, \quad \mbox{at} \ x_{3}=0,
		\end{equation*}
		in boundary normal coordinates for $\mu^{\sharp}$/ $\mu'^{\sharp}$. Theorem \ref{nonuniquenessthm} implies that the presence of the determinant $\lvert g \rvert $ in the above equations leads to a non-uniqueness on the boundary recovery of $\varepsilon^{\sharp}$ and $\mu^{\sharp}$. More specifically, we have that the tangential components of the metrics $\varepsilon^{\sharp}$, $\mu^{\sharp}$ can be determined up to a conformal factor which is arbitrary.
\subsection{Further consequences}
Now let us investigate the consequences of Theorem \ref{metricsequalinbncsthm} to the different pairs of electromagnetic fields $E$, $H$ and $E'$, $H'$.
\begin{theorem}\label{fieldsequalattheboundary}
	Let $(\hat{\varepsilon},\hat{\mu})$, $(\hat{\varepsilon}',\hat{\mu}')$  be two different metrics and $\Lambda_{\hat{\varepsilon}}$, $\Lambda_{\hat{\varepsilon}'}$, $E$, $H$, $E'$, $H'$ the associated impedance maps and electromagnetic fields, respectively. Assuming that $\Lambda_{\hat{\varepsilon}}=\Lambda_{\hat{\varepsilon}'}$ and $\iota^{\ast}H=\iota^{\ast}H'$ then
	\begin{equation*}
	E=E', \quad \mbox{at} \ x_{3}=0,
	\end{equation*}
	in boundary normal coordinates for $\hat{\varepsilon}$/ $\hat{\varepsilon}'$ and
	\begin{equation*}
	H=H', \quad \mbox{at} \ x_{3}=0,
	\end{equation*}
	in boundary normal coordinates for $\hat{\mu}$/$\hat{\mu}'$.
\end{theorem}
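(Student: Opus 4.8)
The plan is to reduce everything to the two facts already available at the boundary --- equality of the tangential metrics, $\tilde{\varepsilon}=\tilde{\varepsilon}'$ and $\tilde{\mu}=\tilde{\mu}'$ (Theorem~\ref{metricsequalinbncsthm}), and equality of the tangential traces of the fields --- and then to show that the \emph{normal} component of each field at $\partial M$ is in fact pinned down pointwise by these tangential data through Maxwell's equations. Because the relevant computation will turn out to be exact at the boundary, this gives exact equality of the full fields there, as claimed, rather than merely equality of principal symbols.

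First I would dispose of the tangential components. By hypothesis $\iota^{\ast}H=\iota^{\ast}H'$, so the tangential parts of $H$ and $H'$ agree at $x_{3}=0$; and since $\iota^{\ast}E=\Lambda_{\hat{\varepsilon}}(\iota^{\ast}H)=\Lambda_{\hat{\varepsilon}'}(\iota^{\ast}H')=\iota^{\ast}E'$, the tangential parts of $E$ and $E'$ agree as well. As the fixed boundary coordinates $x^{1},x^{2}$ are common to all of the boundary normal charts involved, these tangential equalities hold componentwise in each chart. It therefore remains only to match the normal components $E_{3}$ (in boundary normal coordinates for $\hat{\varepsilon}/\hat{\varepsilon}'$) and $H_{3}$ (in boundary normal coordinates for $\hat{\mu}/\hat{\mu}'$).

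For the normal component of $E$ I would use the exact relation $E=\tfrac{i}{\omega}\ast_{\hat{\varepsilon}}\mathrm{d}H$ from \eqref{Maxhat}, taking the same coordinate expression for $\ast_{\hat{\varepsilon}}\mathrm{d}H$ as in the proof of Theorem~\ref{impedanceprincsymbol} but now with the free index equal to $3$. In boundary normal coordinates for $\hat{\varepsilon}$ one has $\hat{\varepsilon}^{33}=1$ and $\hat{\varepsilon}^{3\tilde{a}}=0$ (cf.\ \eqref{epsilonepsilon'tilde}), so lowering the normal index contracts $\sigma$ against $3$ and forces the remaining indices to be tangential:
\[
(\ast_{\hat{\varepsilon}}\mathrm{d}H)_{3}=\frac{\sigma^{\tilde{a}\tilde{j}3}}{\sqrt{\lvert\hat{\varepsilon}^{-1}\rvert}}\,\partial_{x^{\tilde{a}}}H_{\tilde{j}}.
\]
The coefficient $\lvert\hat{\varepsilon}^{-1}\rvert$ is determined by the tangential block $\tilde{\varepsilon}$ in these coordinates, while $\partial_{x^{\tilde{a}}}H_{\tilde{j}}\vert_{\partial M}=\partial_{x^{\tilde{a}}}(\iota^{\ast}H)_{\tilde{j}}$ involves only tangential derivatives of the tangential components of $H$ and so is fixed by $\iota^{\ast}H$ alone. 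Since $\tilde{\varepsilon}=\tilde{\varepsilon}'$ and $\iota^{\ast}H=\iota^{\ast}H'$, I obtain $E_{3}=E_{3}'$ at $x_{3}=0$; together with the tangential equality this gives $E=E'$ at $x_{3}=0$ in boundary normal coordinates for $\hat{\varepsilon}/\hat{\varepsilon}'$.

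Finally I would run the symmetric argument for $H$, using $H=-\tfrac{i}{\omega}\ast_{\hat{\mu}}\mathrm{d}E$ and working in boundary normal coordinates for $\hat{\mu}$: the same computation shows that $H_{3}$ depends only on $\lvert\hat{\mu}^{-1}\rvert$ (equivalently $\tilde{\mu}$) and on $\partial_{x^{\tilde{a}}}(\iota^{\ast}E)_{\tilde{j}}$. Because $\tilde{\mu}=\tilde{\mu}'$ and $\iota^{\ast}E=\iota^{\ast}E'$ were established above, this yields $H_{3}=H_{3}'$, and with $\iota^{\ast}H=\iota^{\ast}H'$ we conclude $H=H'$ at $x_{3}=0$ in boundary normal coordinates for $\hat{\mu}/\hat{\mu}'$. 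The one point that needs care is the claim that the normal component of the Hodge star sees only tangential data: this rests on the block structure $\hat{\varepsilon}^{3\tilde{a}}=0$ (resp.\ $\hat{\mu}^{3\tilde{a}}=0$) of the boundary normal coordinates together with the antisymmetry of $\mathrm{d}H$ (resp.\ $\mathrm{d}E$), which is precisely what prevents any normal derivative of the field from entering. One should also note that for smooth data the fields are smooth up to $\partial M$ by elliptic regularity of the augmented system, so the pointwise evaluation at the boundary is justified.
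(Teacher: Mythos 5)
Your proposal is correct and takes essentially the same route as the paper's own proof: first deduce $\iota^{\ast}E=\Lambda_{\hat{\varepsilon}}(\iota^{\ast}H)=\Lambda_{\hat{\varepsilon}'}(\iota^{\ast}H')=\iota^{\ast}E'$, then read off the normal component $E_{3}$ (resp.\ $H_{3}$) at $x_{3}=0$ from the third component of $\ast_{\hat{\varepsilon}}\mathrm{d}H=-i\omega E$ (resp.\ $\ast_{\hat{\mu}}\mathrm{d}E=i\omega H$) in the corresponding boundary normal coordinates, where only the tangential block $\tilde{\varepsilon}$ (resp.\ $\tilde{\mu}$) and tangential derivatives of the tangential trace appear, so Theorem~\ref{metricsequalinbncsthm} finishes the argument. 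Your additional remarks on the block structure forcing the indices to be tangential and on regularity up to the boundary simply make explicit what the paper leaves implicit.
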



\begin{proof}
We will work with $E$, $E'$ and the result for $H$, $H'$ is derived accordingly. Since $\iota^{\ast}E=\iota^{\ast}E'$, it suffices to prove that $E_{3}=E_{3}'$ at $x_{3}=0$.
	To start with, let us consider the third component of the equation
	\begin{equation*}
	\ast_{\hat{\varepsilon}} \mathrm{d} H= -i \omega E, \quad \mbox{at} \ x_{3}=0,
	\end{equation*}
in boundary normal coordinates for $\hat{\varepsilon}$, which is
	\begin{equation*}
	E_{3}=	\frac{i}{\omega}\frac{\sigma^{\tilde{a}\tilde{k}3}}{\sqrt{\lvert\tilde{\varepsilon}^{-1}\rvert}} \frac{\partial H_{\tilde{k}}}{\partial x_{\tilde{a}}}, \quad \mbox{at} \ x_{3}=0.
	\end{equation*}
	Similarly, the corresponding equation for the $'-$ parameters in boundary normal coordinates for $\hat{\varepsilon}'$ is given by
	\begin{equation*}
	E'_{3}=	\frac{i}{\omega}\frac{\sigma^{\tilde{a}\tilde{k}3}}{\sqrt{\lvert\tilde{\varepsilon}^{-1}\rvert}} \frac{\partial H_{\tilde{k}}}{\partial x_{\tilde{a}}}, \quad \mbox{at} \ x_{3}=0,
	\end{equation*}
	where we have used the result of Theorem \ref{metricsequalinbncsthm} and the assumption $\iota^{\ast}H=\iota^{\ast}H'$.
	Therefore, $E_{3}=E'_{3}$ at $x_{3}=0$ which completes the proof.
\end{proof}
Let $\Psi$, $\Psi'$ be the following change of coordinates transformations
\begin{equation*}
\mbox{BNCs for } \hat{\varepsilon} \quad \overset{\Psi} {\longrightarrow} \quad  \mbox{BNCs for }\hat{\mu},
\end{equation*}
\begin{equation*}
\mbox{BNCs for } \hat{\varepsilon}' \quad \overset{\Psi'} {\longrightarrow} \quad  \mbox{BNCs for }\hat{\mu}'.
\end{equation*}
The Jacobian matrix of $\Psi$ and its inverse at $x_{3}=0$ are given by
\begin{equation}\label{DPSieqs}
D\Psi =\left( \begin{matrix}
1&0& -\frac{\hat{\mu}^{13}}{\hat{\mu}^{33}}\\
0&1& -\frac{\hat{\mu}^{23}}{\hat{\mu}^{33}}\\
0&0& \frac{1}{\sqrt{\hat{\mu}^{33}}}
\end{matrix}\right), \quad 
D\Psi^{-1}=\left( \begin{matrix}
1&0& \frac{\hat{\mu}^{13}}{\sqrt{\hat{\mu}^{33}}}\\
0&1& \frac{\hat{\mu}^{23}}{\sqrt{\hat{\mu}^{33}}}\\
0&0& {\sqrt{\hat{\mu}^{33}}}
\end{matrix}\right),
\end{equation}
where the entries of $\hat{\mu}$ are expressed in boundary normal coordinates for $\hat{\varepsilon}$. Similarly, the Jacobian matrix of $\Psi'$ and its inverse at $x_{3}=0$ are obtained by \eqref{DPSieqs} after substituting the entries of $\hat{\mu}$ by the entries of $\hat{\mu}'$.
Using \eqref{DPSieqs} we obtain the following representations for $\hat{\varepsilon}$ and its determinant in boundary normal coordinates for $\hat{\mu}$ at $x_{3}=0$
\begin{equation}\label{epsilonbncsmu}
\hat{\varepsilon}= D\Psi \left(\begin{matrix}
\tilde{\varepsilon}&0\\
0^{T}&1
\end{matrix}
\right) (D\Psi)^{T}= \left(\begin{matrix}
\tilde{\varepsilon}+ \frac{\hat{\mu}^{3\tilde{i}}\hat{\mu}^{3\tilde{j}}}{(\hat{\mu}^{33})^{2}}& - \frac{\hat{\mu}^{\tilde{i}3}}{(\hat{\mu}^{33})^{3/2}}\\
- \frac{\hat{\mu}^{\tilde{j}3}}{(\hat{\mu}^{33})^{3/2}}& \frac{1}{\hat{\mu}^{33}}
\end{matrix}
\right), \quad \lvert \hat{\varepsilon}\rvert=\frac{\lvert\tilde{\varepsilon}\rvert}{\hat{\mu}^{33}}.
\end{equation}
Similarly, the metric $\hat{\mu}$ and its determinant in boundary normal coordinates for $\hat{\varepsilon}$ at $x_{3}=0$ are provided by
\begin{equation}\label{mubncsepsilon}
\hat{\mu}= D\Psi ^{-1}\left(\begin{matrix}
\tilde{\mu}&0\\
0^{T}& 1
\end{matrix}\right) (D\Psi ^{-1})^{T}=\left( \begin{matrix}
\tilde{\mu}+ \frac{\hat{\mu}^{3\tilde{i}}\hat{\mu}^{3\tilde{j}}}{\hat{\mu}^{33}}& \hat{\mu}^{\tilde{i}3}\\
\hat{\mu}^{3\tilde{j}}&\hat{\mu}^{33}
\end{matrix}\right), \quad \lvert\hat{\mu}\rvert=\hat{\mu}^{33}\lvert\tilde{\mu}\rvert.
\end{equation}
The expressions of $\hat{\varepsilon}'$ in boundary normal coordinates for $\hat{\mu}'$ and $\hat{\mu}'$ in boundary normal coordinates for $\hat{\varepsilon}'$ at $x_{3}=0$ follow from equations \eqref{epsilonbncsmu} and \eqref{mubncsepsilon}, respectively, after substituting the entries of $\hat{\mu}$ by the entries of $\hat{\mu}'$.
Now let us introduce the notation for the covectors  $\xi_{\hat{\mu}}$, $\chi_{\hat{\mu}}$, $\xi_{\hat{\varepsilon}}$ and $\chi_{\hat{\varepsilon}}$ in boundary normal coordinates for $\hat{\mu}$ and $\hat{\varepsilon}$ as shown in Table \ref{covectorsinbncs} and then express their normal components in each coordinate system.
\begin{table}[h]
	\begin{center}
	\begin{tabular}{|c| c| c| c|} 
		\hline
	Covectors&	BNCs for $\hat{\mu}$&   BNCs for $\hat{\varepsilon}$ \\ [0.5ex] 
		\hline
	$\xi_{\hat{\mu}}$&	$\xi_{\hat{\mu},\tilde{\mu}}$ &  $\xi_{\hat{\mu},\tilde{\varepsilon}}$   \\ 
		\hline
	$\chi_{\hat{\mu}}$&	$\chi_{\hat{\mu},\tilde{\mu}}$ & $\chi_{\hat{\mu},\tilde{\varepsilon}}$   \\
		\hline
	$\xi_{\hat{\varepsilon}}$&	$\xi_{\hat{\varepsilon},\tilde{\mu}}$& $\xi_{\hat{\varepsilon},\tilde{\varepsilon}}$  \\
		\hline
	$\chi_{\hat{\varepsilon}}$&	$\chi_{\hat{\varepsilon},\tilde{\mu}}$	 &  $\chi_{\hat{\varepsilon},\tilde{\varepsilon}}$ \\
		\hline
		\end{tabular}
	\caption{\label{covectorsinbncs}Notation for covectors in the different coordinate systems.}
	\end{center}
\end{table}
\begin{lemma}\label{xilemma}
	The normal components of the covectors $\xi_{\hat{\varepsilon},\tilde{\varepsilon}}$, $\xi_{\hat{\mu},\tilde{\mu}}$, $\xi_{\hat{\mu},\tilde{\varepsilon}}$ and $\xi_{\hat{\varepsilon},\tilde{\mu}}$ are provided by
	\begin{equation*}
{\xi_{\hat{\varepsilon},\tilde{\varepsilon}}}_{3}= i \lvert\tilde{\xi}\rvert_{\tilde{\varepsilon}}, \quad 
{\xi_{\hat{\mu},\tilde{\mu}}}_{3}= i \lvert\tilde{\xi}\rvert_{\tilde{\mu}},
\end{equation*}
	\begin{equation*}
{\xi_{\hat{\mu},\tilde{\varepsilon}}}_{3}= - \frac{\langle \nu,\tilde{\xi} \rangle_{\hat{\mu}}}{\hat{\mu}^{33}} + i \frac{\lvert\tilde{\xi}\rvert_{\tilde{\mu}}}{\sqrt{\hat{\mu}^{33}}}, \quad {\xi_{\hat{\varepsilon},\tilde{\mu}}}_{3}= \frac{\langle \nu,\tilde{\xi}\rangle_{\hat{\mu}}}{\sqrt{\hat{\mu}^{33}}}+ i \lvert\tilde{\xi}\rvert_{\tilde{\varepsilon}}\sqrt{\hat{\mu}^{33}}.
\end{equation*}
\end{lemma}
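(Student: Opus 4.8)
The plan is to treat the four identities in two groups: the first three are immediate specializations of the eigenvalue formulas \eqref{xihatmu3} and \eqref{xihatepsilon3}, whereas the last requires transporting the $\hat{\varepsilon}$-null covector between the two boundary normal coordinate systems. Throughout I use that, by construction, $\xi_{\hat{\varepsilon}} = \xi\vert_{\xi_3={\xi_{\hat{\varepsilon}}}_3}$ and $\xi_{\hat{\mu}} = \xi\vert_{\xi_3={\xi_{\hat{\mu}}}_3}$, so that in any boundary coordinates the normal component of $\xi_{\hat{\varepsilon}}$ (resp. $\xi_{\hat{\mu}}$) is exactly the positive-imaginary-part root ${\xi_{\hat{\varepsilon}}}_3$ of \eqref{xihatepsilon3} (resp. ${\xi_{\hat{\mu}}}_3$ of \eqref{xihatmu3}). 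For ${\xi_{\hat{\varepsilon},\tilde{\varepsilon}}}_3$ I would evaluate \eqref{xihatepsilon3} in boundary normal coordinates for $\hat{\varepsilon}$, where \eqref{epsilonepsilon'tilde} forces $\hat{\varepsilon}^{33}=1$ and $\hat{\varepsilon}^{3\tilde{a}}=0$, so $\langle\nu,\tilde{\xi}\rangle_{\hat{\varepsilon}}=0$ and only the term $i\lvert\tilde{\xi}\rvert_{\tilde{\varepsilon}}$ survives; symmetrically ${\xi_{\hat{\mu},\tilde{\mu}}}_3 = i\lvert\tilde{\xi}\rvert_{\tilde{\mu}}$ follows from \eqref{xihatmu3} using \eqref{mumu'tilde}. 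For ${\xi_{\hat{\mu},\tilde{\varepsilon}}}_3$ I would simply read off \eqref{xihatmu3} in boundary normal coordinates for $\hat{\varepsilon}$, where $\hat{\mu}$ carries its general entries, noting that $\tilde{\mu}$ is the induced metric on $\partial M$ and hence $\lvert\tilde{\xi}\rvert_{\tilde{\mu}}$ is coordinate invariant; the formula is then already in the stated form.

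For the remaining component ${\xi_{\hat{\varepsilon},\tilde{\mu}}}_3$ the key step is the transformation law. I would push the covector $\xi_{\hat{\varepsilon}}$, whose representation $(\xi_{\tilde{a}}, i\lvert\tilde{\xi}\rvert_{\tilde{\varepsilon}})$ in boundary normal coordinates for $\hat{\varepsilon}$ is already known, through the change of coordinates $\Psi$ to boundary normal coordinates for $\hat{\mu}$ using the covector rule $\xi^{(\hat{\mu})} = (D\Psi^{-1})^T\xi^{(\hat{\varepsilon})}$. Taking the bottom row of $(D\Psi^{-1})^T$ from \eqref{DPSieqs}, the new normal component is $\frac{\hat{\mu}^{3\tilde{a}}\xi_{\tilde{a}}}{\sqrt{\hat{\mu}^{33}}} + i\sqrt{\hat{\mu}^{33}}\lvert\tilde{\xi}\rvert_{\tilde{\varepsilon}}$, which is the claimed $\frac{\langle\nu,\tilde{\xi}\rangle_{\hat{\mu}}}{\sqrt{\hat{\mu}^{33}}} + i\lvert\tilde{\xi}\rvert_{\tilde{\varepsilon}}\sqrt{\hat{\mu}^{33}}$ once we recall $\langle\nu,\tilde{\xi}\rangle_{\hat{\mu}} = \hat{\mu}^{3\tilde{a}}\xi_{\tilde{a}}$, with the entries of $\hat{\mu}$ taken in boundary normal coordinates for $\hat{\varepsilon}$. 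As an independent check I would rederive the same value intrinsically by substituting the representation \eqref{epsilonbncsmu} of $\hat{\varepsilon}$ in boundary normal coordinates for $\hat{\mu}$ into \eqref{xihatepsilon3}; a short Schur-complement computation shows that the induced tangential norm $\lvert\tilde{\xi}\rvert_{\tilde{\varepsilon}}$ is unchanged, confirming consistency.

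The only genuine obstacle is the bookkeeping in this last case: one must keep $\hat{\mu}^{33}$ and $\hat{\mu}^{3\tilde{a}}$ as the entries of $\hat{\mu}$ in boundary normal coordinates for $\hat{\varepsilon}$ (where they are nontrivial) rather than for $\hat{\mu}$, use $(D\Psi^{-1})^T$ and not $D\Psi$ in the transformation, and verify that the positive-imaginary-part branch is preserved. The last point holds because $\hat{\mu}^{33}>0$ by positive definiteness, so $\sqrt{\hat{\mu}^{33}}$ is real and positive and the imaginary part of the transformed normal component stays positive. With these points settled, every identity reduces to a direct substitution into \eqref{xihatepsilon3} and \eqref{xihatmu3}.
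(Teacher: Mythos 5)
Your proposal is correct and is essentially the paper's own proof: the paper's argument consists precisely of substituting the coordinate representations of the metrics (equations \eqref{epsilonepsilon'tilde}, \eqref{mumu'tilde}, \eqref{epsilonbncsmu}) into the root formulas \eqref{xihatmu3} and \eqref{xihatepsilon3}, which is what you do for the first three components and include as your ``independent check'' for ${\xi_{\hat{\varepsilon},\tilde{\mu}}}_{3}$. Your primary route for ${\xi_{\hat{\varepsilon},\tilde{\mu}}}_{3}$ --- pushing the covector through $(D\Psi^{-1})^{T}$ from \eqref{DPSieqs} --- is an equivalent computation, and you correctly flag the one genuine subtlety, namely that the entries $\hat{\mu}^{33}$ and $\hat{\mu}^{3\tilde{a}}$ appearing in the stated formula are those of $\hat{\mu}$ expressed in boundary normal coordinates for $\hat{\varepsilon}$.
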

\begin{proof}
	To prove the above equations it suffices to use equations \eqref{xihatmu3} and \eqref{xihatepsilon3} and the representations of each metric in the different coordinate systems.
\end{proof}
\begin{lemma}\label{chilemma}
	The normal components of the covectors ${\chi_{\hat{\mu},\tilde{\varepsilon}}}_{3}$, ${\chi_{\hat{\mu}',\tilde{\varepsilon}}}_{3}$, ${\chi_{\hat{\varepsilon},\tilde{\mu}}}_{3}$ and  ${\chi_{\hat{\varepsilon}',\tilde{\mu}}}_{3}$ are expressed as
\begin{equation*}
{\chi_{\hat{\mu},\tilde{\varepsilon}}}_{3}= \frac{\sigma_{3\tilde{j}\tilde{i}}\tilde{\varepsilon}^{\tilde{i}\tilde{d}}\xi_{\tilde{d}} \tilde{\mu}^{\tilde{j}\tilde{b}}\xi_{\tilde{b}}}{\sqrt{\lvert\tilde{\mu}\rvert}\sqrt{\hat{\mu}^{33}}}+ i \lvert\tilde{\xi}\rvert_{\tilde{\mu}} \frac{\sigma_{3\tilde{j}\tilde{i}}\tilde{\varepsilon}^{\tilde{i}\tilde{b}}\xi_{\tilde{b}}\hat{\mu}^{\tilde{j}3}}{\sqrt{\lvert\tilde{\mu}\rvert}\hat{\mu}^{33}},
\end{equation*}	
	\begin{equation*}
{\chi_{\hat{\varepsilon},\tilde{\mu}}}_{3}= \frac{\sigma_{3\tilde{j}\tilde{i}}\tilde{\varepsilon}^{\tilde{b}\tilde{j}}\xi_{\tilde{b}}\tilde{\mu}^{\tilde{i}\tilde{d}}\xi_{\tilde{d}}\sqrt{\hat{\mu}^{33}}}{\sqrt{\lvert\tilde{{\varepsilon}}\rvert}}- i\lvert\tilde{\xi}\rvert_{\tilde{\varepsilon}} \frac{\sigma_{3\tilde{j}\tilde{i}}\hat{\mu}^{3\tilde{j}}\tilde{\mu}^{\tilde{i}\tilde{d}}\xi_{\tilde{d}}}{\sqrt{\lvert\tilde{\varepsilon}\rvert}\sqrt{\hat{\mu}^{33}}}.
\end{equation*}
\end{lemma}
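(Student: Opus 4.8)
The strategy is to feed the explicit coordinate data assembled above directly into the definition of $\chi$ from \eqref{cordinvcovectors} and read off the third component. Recall that $\chi = \ast_{\hat{\varepsilon}}(\xi \wedge \zeta)$ with $\zeta = \hat{\varepsilon}^{-1}\hat{\mu}\xi$, and that the coordinate form of the Hodge star on such a wedge is the one appearing in the proof of Theorem \ref{impedanceprincsymbol}, namely $\chi_l = (\hat{\varepsilon}^{-1})_{lb}\,\sigma^{akb}\,\lvert\hat{\varepsilon}^{-1}\rvert^{-1/2}\,\xi_a\zeta_k$. For $\chi_{\hat{\varepsilon}}$ we evaluate at $\xi_3 = {\xi_{\hat{\varepsilon}}}_3$ and for $\chi_{\hat{\mu}}$ (with $\hat{\varepsilon}$ and $\hat{\mu}$ interchanged) at $\xi_3 = {\xi_{\hat{\mu}}}_3$, using the values recorded in Lemma \ref{xilemma}. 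Throughout I use that, by Lemma \ref{xilemma}, the root with positive imaginary part transforms into a root with positive imaginary part under the coordinate change $\Psi$ of \eqref{DPSieqs}, so that $\chi_{\hat{\varepsilon}}$ (respectively $\chi_{\hat{\mu}}$) denotes the same covector whether written in boundary normal coordinates for $\hat{\varepsilon}$ or for $\hat{\mu}$; this is what makes the notation of Table \ref{covectorsinbncs} unambiguous.

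For ${\chi_{\hat{\varepsilon},\tilde{\mu}}}_3$ the cleanest route is to compute $\chi_{\hat{\varepsilon}}$ first in boundary normal coordinates for $\hat{\varepsilon}$, where the Hodge star formula collapses: since $\hat{\varepsilon} = \mathrm{diag}(\tilde{\varepsilon},1)$ there we have $(\hat{\varepsilon}^{-1})_{3b} = \delta_{3b}$, $\hat{\varepsilon}^{33}=1$, $\hat{\varepsilon}^{3\tilde{i}}=0$ and $\lvert\hat{\varepsilon}^{-1}\rvert^{-1/2} = \sqrt{\lvert\tilde{\varepsilon}\rvert}$, so the normal component reduces to the two-dimensional contraction ${\chi_{\hat{\varepsilon},\tilde{\varepsilon}}}_3 = \sqrt{\lvert\tilde{\varepsilon}\rvert}\,\sigma^{\tilde{a}\tilde{k}3}\xi_{\tilde{a}}\zeta_{\tilde{k}}$, with $\zeta_{\tilde{k}} = \tilde{\varepsilon}_{\tilde{k}\tilde{i}}\bigl(\hat{\mu}^{\tilde{i}\tilde{d}}\xi_{\tilde{d}} + \hat{\mu}^{\tilde{i}3}\, i\lvert\tilde{\xi}\rvert_{\tilde{\varepsilon}}\bigr)$. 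I would then pass to boundary normal coordinates for $\hat{\mu}$ via the covector transformation law associated with $\Psi$, whose relevant entries are read from the third column of $D\Psi^{-1}$ in \eqref{DPSieqs}, giving ${\chi_{\hat{\varepsilon},\tilde{\mu}}}_3 = \sqrt{\hat{\mu}^{33}}\,{\chi_{\hat{\varepsilon},\tilde{\varepsilon}}}_3 + \frac{\hat{\mu}^{\tilde{i}3}}{\sqrt{\hat{\mu}^{33}}}\,{\chi_{\hat{\varepsilon},\tilde{\varepsilon}}}_{\tilde{i}}$, where the tangential components ${\chi_{\hat{\varepsilon},\tilde{\varepsilon}}}_{\tilde{i}}$ are obtained in the same way as the normal one. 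Finally I would eliminate $\hat{\mu}^{\tilde{i}\tilde{d}}$ in favour of the boundary metric $\tilde{\mu}$ using the Schur-complement relation $\hat{\mu}^{\tilde{i}\tilde{j}} = \tilde{\mu}^{\tilde{i}\tilde{j}} + \hat{\mu}^{3\tilde{i}}\hat{\mu}^{3\tilde{j}}/\hat{\mu}^{33}$ implicit in \eqref{mubncsepsilon}, and collapse the remaining three-index alternating tensors to the two-index $\sigma_{3\tilde{j}\tilde{i}}$ of the statement. The key algebraic step here is the two-dimensional cofactor identity $\sigma^{\tilde{a}\tilde{k}3}\tilde{\varepsilon}_{\tilde{k}\tilde{i}} = \lvert\tilde{\varepsilon}\rvert^{-1}\sigma_{3\tilde{j}\tilde{i}}\tilde{\varepsilon}^{\tilde{j}\tilde{a}}$ (the boundary analogue of \eqref{identitycofact}), which is exactly what converts the factor $\sqrt{\lvert\tilde{\varepsilon}\rvert}$ in the numerator into the $\sqrt{\lvert\tilde{\varepsilon}\rvert}$ appearing in the denominator of the claimed formula. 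The computation of ${\chi_{\hat{\mu},\tilde{\varepsilon}}}_3$ is the mirror image, obtained by interchanging $\hat{\varepsilon} \leftrightarrow \hat{\mu}$ (hence $\tilde{\varepsilon} \leftrightarrow \tilde{\mu}$ and the direction of $\Psi$); the primed expressions ${\chi_{\hat{\mu}',\tilde{\varepsilon}}}_3$ and ${\chi_{\hat{\varepsilon}',\tilde{\mu}}}_3$ follow verbatim with $\hat{\mu}$ replaced by $\hat{\mu}'$.

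I expect the splitting into real and imaginary parts to fall out naturally: the real part of ${\chi_{\hat{\varepsilon},\tilde{\mu}}}_3$ comes from the $\hat{\mu}^{\tilde{i}\tilde{d}}\xi_{\tilde{d}}$ piece of $\zeta$ weighted by the $\sqrt{\hat{\mu}^{33}}$ of the transformation, while the imaginary part is generated by the $i\lvert\tilde{\xi}\rvert_{\tilde{\varepsilon}}$ in ${\xi_{\hat{\varepsilon}}}_3$ combined with the off-diagonal entries $\hat{\mu}^{3\tilde{j}}$. The main obstacle is purely organisational: one must track rigorously which coordinate system each component of $\hat{\mu}$ lives in (the surviving $\hat{\mu}^{3\tilde{j}}$, $\hat{\mu}^{33}$ are those in boundary normal coordinates for $\hat{\varepsilon}$), reduce every three-dimensional Levi-Civita contraction to the clean boundary form without dropping factors of $\sqrt{\lvert\tilde{\varepsilon}\rvert}$, $\sqrt{\lvert\tilde{\mu}\rvert}$ or $\sqrt{\hat{\mu}^{33}}$, and verify that the Schur-complement correction terms combine correctly rather than producing spurious contributions. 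A secondary point to confirm is the consistency of the branch choice for the eigenvalue under $\Psi$, which is guaranteed by Lemma \ref{xilemma}.
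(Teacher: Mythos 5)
Your plan is correct and, when carried out, does reproduce both stated formulas; however, it is organised differently from the paper's proof, which is a one-step direct evaluation. The paper first uses identity \eqref{altdetinvid} to write the coordinate-invariant expressions
\begin{equation*}
{\chi_{\hat{\mu}}}_{k}=\frac{\sigma_{kji}\hat{\mu}^{bj}{\xi_{\hat{\mu}}}_{b}\hat{\varepsilon}^{id}{\xi_{\hat{\mu}}}_{d}}{\sqrt{\lvert\hat{\mu}\rvert}}, \qquad
{\chi_{\hat{\varepsilon}}}_{k}=\frac{\sigma_{kji}\hat{\varepsilon}^{bj}{\xi_{\hat{\varepsilon}}}_{b}\hat{\mu}^{id}{\xi_{\hat{\varepsilon}}}_{d}}{\sqrt{\lvert\hat{\varepsilon}\rvert}},
\end{equation*}
valid in any chart, and then simply sets $k=3$ in the chart of interest, substituting the metric representations \eqref{epsilonbncsmu}, \eqref{mubncsepsilon} and Lemma \ref{xilemma}; no tangential components of $\chi$ and no change-of-coordinates step are needed. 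You instead evaluate each $\chi$ in its own boundary normal coordinates, where the Hodge star collapses to a two-dimensional contraction, and then transport the components to the other chart using the third columns of $D\Psi^{\pm 1}$ from \eqref{DPSieqs}. All the ingredients you cite check out: your transformation rule ${\chi_{\hat{\varepsilon},\tilde{\mu}}}_3 = \sqrt{\hat{\mu}^{33}}\,{\chi_{\hat{\varepsilon},\tilde{\varepsilon}}}_3 + \hat{\mu}^{\tilde{i}3}{\chi_{\hat{\varepsilon},\tilde{\varepsilon}}}_{\tilde{i}}/\sqrt{\hat{\mu}^{33}}$ is exactly the one that converts ${\xi_{\hat{\varepsilon},\tilde{\varepsilon}}}_3$ into ${\xi_{\hat{\varepsilon},\tilde{\mu}}}_3$ in Lemma \ref{xilemma} (which also settles your branch-consistency point), the two-dimensional cofactor identity is correct, and the cancellation you flag does occur: the cross terms proportional to $\sigma_{3\tilde{j}\tilde{i}}\tilde{\varepsilon}^{\tilde{b}\tilde{j}}\xi_{\tilde{b}}\hat{\mu}^{3\tilde{i}}$ produced by $\sqrt{\hat{\mu}^{33}}\,{\chi_{\hat{\varepsilon},\tilde{\varepsilon}}}_3$ and by $\hat{\mu}^{\tilde{i}3}{\chi_{\hat{\varepsilon},\tilde{\varepsilon}}}_{\tilde{i}}/\sqrt{\hat{\mu}^{33}}$ cancel identically, while the contraction $\hat{\mu}^{3\tilde{l}}\sigma_{\tilde{l}3\tilde{i}}\hat{\mu}^{3\tilde{i}}$ vanishes by antisymmetry, leaving precisely the two stated terms. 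What each route buys: the paper's general-coordinate formula does the transporting for you, so its computation is shorter and avoids both the tangential components of $\chi$ and the cancellation bookkeeping; your route keeps each individual evaluation maximally simple and makes the covariance of the covectors explicit, at the cost of the extra transformation step and the chart-tracking you rightly emphasise (the surviving $\hat{\mu}^{3\tilde{j}}$, $\hat{\mu}^{33}$ are the boundary-normal-for-$\hat{\varepsilon}$ components, which for ${\chi_{\hat{\mu},\tilde{\varepsilon}}}_3$ enter through \eqref{epsilonbncsmu} with its minus signs and powers of $\hat{\mu}^{33}$).
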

\begin{proof}
	We use the following expressions of the covectors $\chi_{\hat{\mu}}$ and $\chi_{\hat{\varepsilon}}$ in general coordinates
		\begin{equation*}
	{\chi_{\hat{\mu}}}_{k}=\frac{\sigma_{kji}\hat{\mu}^{bj}{\xi_{\hat{\mu}}}_{b}\hat{\varepsilon}^{id}{\xi_{\hat{\mu}}}_{d}}{\sqrt{\lvert\hat{\mu}\rvert}},
	\end{equation*}
	\begin{equation*}
	{\chi_{\hat{\varepsilon}}}_{k}=\frac{\sigma_{kji}\hat{\varepsilon}^{bj}{\xi_{\hat{\varepsilon}}}_{b}\hat{\mu}^{id}{\xi_{\hat{\varepsilon}}}_{d}}{\sqrt{\lvert\hat{\varepsilon}\rvert}},
	\end{equation*}
	which are derived using the identity \eqref{altdetinvid} for the metrics $\hat{\mu}$ and $\hat{\varepsilon}$, respectively.
	 The normal components of these covectors in each coordinate system are calculated using the representations of the metrics in each coordinate system and Lemma \ref{xilemma}.  
\end{proof}
The normal components of the $'-$ covectors in the different coordinate systems denoted by ${\xi_{\hat{\varepsilon}',\tilde{\varepsilon}}}_{3}$, ${\xi_{\hat{\mu}',\tilde{\mu}}}_{3}$, $\xi_{\hat{\mu}',\tilde{\varepsilon}}$, $\xi_{\hat{\varepsilon}',\tilde{\mu}}$, ${\chi_{\hat{\varepsilon}',\tilde{\mu}}}_{3}$ and ${\chi_{\hat{\mu}',\tilde{\varepsilon}}}_{3}$ are obtained from the expressions derived in Lemma \ref{xilemma} and \ref{chilemma} after substituting the normal components of $\hat{\mu}$ by the normal components of $\hat{\mu}'$.
\begin{lemma}\label{chimumu'lemma}
	Let $(\hat{\varepsilon},\hat{\mu})$ $(\hat{\varepsilon}',\hat{\mu}')$ be 2 different sets of metrics and $\Lambda_{\hat{\varepsilon}}$, $\Lambda_{\hat{\varepsilon}'}$ the associated boundary mappings. 
	If $\Lambda_{\hat{\varepsilon}}=\Lambda_{\hat{\varepsilon}'}$, then the following identities hold
	\begin{equation}\label{chimumu'id}
	\left({\xi_{\hat{\mu}',\tilde{\varepsilon}}}_{3}+ {\xi_{\hat{\varepsilon},\tilde{\varepsilon}}}_{3}\right){\chi_{\hat{\mu},\tilde{\varepsilon}}}_{3}=\left({\xi_{\hat{\mu},\tilde{\varepsilon}}}_{3}+ {\xi_{\hat{\varepsilon},\tilde{\varepsilon}}}_{3}\right){\chi_{\hat{\mu}',\tilde{\varepsilon}}}_{3}, \quad \mbox{at} \ x_{3}=0,
	\end{equation}
		\begin{equation}\label{chiepsilonepsilon'id}
	\left({\xi_{\hat{\varepsilon}',\tilde{\mu}}}_{3}+ {\xi_{\hat{\mu},\tilde{\mu}}}_{3}\right){\chi_{\hat{\varepsilon},\tilde{\mu}}}_{3}=\left({\xi_{\hat{\varepsilon},\tilde{\mu}}}_{3}+ {\xi_{\hat{\mu},\tilde{\mu}}}_{3}\right){\chi_{\hat{\varepsilon}',\tilde{\mu}}}_{3}, \quad \mbox{at} \ x_{3}=0.
	\end{equation}
\end{lemma}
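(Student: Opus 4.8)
The plan is to derive both identities from the equality of the principal symbols of the fields furnished by Theorem \ref{fieldsequalattheboundary}. I will treat \eqref{chimumu'id} in detail and obtain \eqref{chiepsilonepsilon'id} by the symmetric argument interchanging $\hat{\varepsilon}$ and $\hat{\mu}$. First I would record that, since $\Lambda_{\hat{\mu}}$ is the inverse of $\Lambda_{\hat{\varepsilon}}$, the hypothesis $\Lambda_{\hat{\varepsilon}}=\Lambda_{\hat{\varepsilon}'}$ also gives $\Lambda_{\hat{\mu}}=\Lambda_{\hat{\mu}'}$. Prescribing a common datum for $H$ then yields fields with $\iota^{\ast}H=\iota^{\ast}H'$ and $\iota^{\ast}E=\iota^{\ast}E'$, so Theorem \ref{fieldsequalattheboundary} gives $E=E'$ at $x_{3}=0$ in boundary normal coordinates for $\hat{\varepsilon}$. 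As the boundary datum is arbitrary, this is an equality of the associated solution operators, i.e. $E^{(0)}=E'^{(0)}$ at $x_{3}=0$.

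Into this I substitute the decomposition $E^{(0)}=a_{\hat{\mu}}\chi_{\hat{\mu}}+b_{\hat{\mu}}\xi_{\hat{\varepsilon}}$, the analogue of \eqref{H0decomp}, together with its primed counterpart, all written in boundary normal coordinates for $\hat{\varepsilon}$. Using Theorem \ref{metricsequalinbncsthm} (so that $\tilde{\varepsilon}=\tilde{\varepsilon}'$ and $\tilde{\mu}=\tilde{\mu}'$) I note two facts: the covector $\xi_{\hat{\varepsilon}}$ is common to both decompositions at the boundary, and the numerator $\ast_{\iota^{\ast}\hat{\mu}}(\tilde{\xi}\wedge G)$ appearing in $a_{\hat{\mu}}$ and $a_{\hat{\mu}'}$ (see \eqref{abepsilonconst}) is the same, while only the scalar denominators $d_{\hat{\mu}}=\langle \nu_{\hat{\mu}},\xi_{\hat{\mu}}\rangle_{\hat{\mu}}\lvert\xi_{\hat{\mu}}\rvert_{\hat{\varepsilon}}^{2}$ differ. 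The next step exploits the arbitrariness of $G$: since $E^{(0)}-E'^{(0)}$ vanishes for every $G$ and the terms built from the $b$-coefficients are always proportional to $\xi_{\hat{\varepsilon}}$, the coefficient of the remaining independent functional $\ast_{\iota^{\ast}\hat{\mu}}(\tilde{\xi}\wedge G)$ must itself be a multiple of $\xi_{\hat{\varepsilon}}$, that is
\[
\frac{\chi_{\hat{\mu}}}{d_{\hat{\mu}}}-\frac{\chi_{\hat{\mu}'}}{d_{\hat{\mu}'}}=\lambda\,\xi_{\hat{\varepsilon}},\quad \mbox{at } x_{3}=0,
\]
for some scalar $\lambda$. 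Equivalently, the covectors $\chi_{\hat{\mu}}$, $\chi_{\hat{\mu}'}$ and $\xi_{\hat{\varepsilon}}$ are linearly dependent, and this single structural relation is the source of \eqref{chimumu'id}.

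Finally I would convert this covector relation into the stated scalar identity in the normal components. Using Lemma \ref{xilemma}, a short computation gives $\langle \nu_{\hat{\mu}},\xi_{\hat{\mu}}\rangle_{\hat{\mu}}=i\lvert\tilde{\xi}\rvert_{\tilde{\mu}}$ and $\lvert\xi_{\hat{\mu}}\rvert_{\hat{\varepsilon}}^{2}=\left({\xi_{\hat{\mu},\tilde{\varepsilon}}}_{3}-{\xi_{\hat{\varepsilon},\tilde{\varepsilon}}}_{3}\right)\left({\xi_{\hat{\mu},\tilde{\varepsilon}}}_{3}+{\xi_{\hat{\varepsilon},\tilde{\varepsilon}}}_{3}\right)$, which exposes the factor ${\xi_{\hat{\mu},\tilde{\varepsilon}}}_{3}+{\xi_{\hat{\varepsilon},\tilde{\varepsilon}}}_{3}$ inside $d_{\hat{\mu}}$. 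Substituting these, together with the explicit expressions for the components of $\chi_{\hat{\mu}}$ from Lemma \ref{chilemma} and its proof, and clearing the common factors, should collapse the linear-dependence relation to \eqref{chimumu'id}. The identity \eqref{chiepsilonepsilon'id} then follows verbatim by starting instead from $H=H'$ at $x_{3}=0$ in boundary normal coordinates for $\hat{\mu}$, using the decomposition \eqref{H0decomp} $H^{(0)}=a_{\hat{\varepsilon}}\chi_{\hat{\varepsilon}}+b_{\hat{\varepsilon}}\xi_{\hat{\mu}}$, and interchanging $\hat{\varepsilon}\leftrightarrow\hat{\mu}$.

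I expect the main obstacle to be this last step. The tangential parts of $\chi_{\hat{\mu}}$ and $\chi_{\hat{\mu}'}$ are not proportional, so the covector relation genuinely mixes tangential and normal data; pinning down the proportionality constant $\lambda$ and showing that the tangential contributions cancel to leave exactly the clean bilinear form \eqref{chimumu'id} requires the full component formulas of Lemma \ref{chilemma} rather than only the normal components stated there. Care is also needed in the bookkeeping that expresses every covector in the single chart of boundary normal coordinates for $\hat{\varepsilon}$, since it is this choice that identifies $\xi_{\hat{\varepsilon}'}$ with $\xi_{\hat{\varepsilon}}$ and makes the numerators of $a_{\hat{\mu}}$ and $a_{\hat{\mu}'}$ agree.
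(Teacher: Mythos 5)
Your plan follows, in substance, the paper's own proof: the same ingredients (Theorem \ref{fieldsequalattheboundary}, the decomposition of $E^{(0)}$ with coefficients \eqref{abepsilonconst}, and the identifications $\xi_{\hat{\varepsilon}}=\xi_{\hat{\varepsilon}'}$, $\ast_{\iota^{\ast}\hat{\mu}}=\ast_{\iota^{\ast}\hat{\mu}'}$ at $x_{3}=0$ supplied by Theorem \ref{metricsequalinbncsthm}) are used in the same way, and your intermediate conclusion
\begin{equation*}
\frac{\chi_{\hat{\mu}}}{d_{\hat{\mu}}}-\frac{\chi_{\hat{\mu}'}}{d_{\hat{\mu}'}}=\lambda\,\xi_{\hat{\varepsilon}},\qquad d_{\hat{\mu}}=\langle\nu_{\hat{\mu}},\xi_{\hat{\mu}}\rangle_{\hat{\mu}}\lvert\xi_{\hat{\mu}}\rvert_{\hat{\varepsilon}}^{2},
\end{equation*}
is a correct consequence of $E^{(0)}(G)=E'^{(0)}(G)$ for all $G$. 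The genuine gap is that you stop exactly where the content of the lemma lies: you never determine $\lambda$, and \eqref{chimumu'id} is \emph{not} what comes out if $\lambda$ is ignored. Indeed, by Lemma \ref{xilemma} one has $d_{\hat{\mu}}=i\lvert\tilde{\xi}\rvert_{\tilde{\mu}}\left({\xi_{\hat{\mu},\tilde{\varepsilon}}}_{3}-{\xi_{\hat{\varepsilon},\tilde{\varepsilon}}}_{3}\right)\left({\xi_{\hat{\mu},\tilde{\varepsilon}}}_{3}+{\xi_{\hat{\varepsilon},\tilde{\varepsilon}}}_{3}\right)$, so taking only normal components with $\lambda$ dropped would leave spurious factors ${\xi_{\hat{\mu},\tilde{\varepsilon}}}_{3}-{\xi_{\hat{\varepsilon},\tilde{\varepsilon}}}_{3}$ and ${\xi_{\hat{\mu}',\tilde{\varepsilon}}}_{3}-{\xi_{\hat{\varepsilon},\tilde{\varepsilon}}}_{3}$, which differ between the two parameter sets; the resulting identity is not \eqref{chimumu'id}. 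So eliminating $\lambda$ is not bookkeeping; it is the proof.

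Fortunately your route does close, and the device that closes it is precisely the one in the paper. Expand the tangential parts in the basis $\{\tilde{\xi},\tilde{\xi}_{\tilde{\varepsilon}\perp}\}$ as in \eqref{chimuxibasis}, $\tilde{\chi}_{\hat{\mu}}=a\,\tilde{\xi}+b\,\tilde{\xi}_{\tilde{\varepsilon}\perp}$, with $a=-{\xi_{\hat{\mu},\tilde{\varepsilon}}}_{3}{\chi_{\hat{\mu},\tilde{\varepsilon}}}_{3}/\lvert\tilde{\xi}\rvert_{\tilde{\varepsilon}}^{2}$ and $b=-i\sqrt{\lvert\tilde{\varepsilon}\rvert}\,\lvert\xi_{\hat{\mu}}\rvert_{\hat{\varepsilon}}^{2}\lvert\tilde{\xi}\rvert_{\tilde{\mu}}/\left(\sqrt{\lvert\tilde{\mu}\rvert}\,\lvert\tilde{\xi}\rvert_{\tilde{\varepsilon}}^{2}\right)$. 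Then $b/d_{\hat{\mu}}=b'/d_{\hat{\mu}'}$ holds automatically, since $\lvert\xi_{\hat{\mu}}\rvert_{\hat{\varepsilon}}^{2}$ cancels and only quantities equal for both parameter sets remain; hence the $\tilde{\xi}_{\tilde{\varepsilon}\perp}$ component of your relation is vacuous, and the $\tilde{\xi}$ component determines $\lambda=a/d_{\hat{\mu}}-a'/d_{\hat{\mu}'}$. Substituting this into the normal component and using $\lvert\tilde{\xi}\rvert_{\tilde{\varepsilon}}^{2}=-\left({\xi_{\hat{\varepsilon},\tilde{\varepsilon}}}_{3}\right)^{2}$ gives
\begin{equation*}
\frac{{\chi_{\hat{\mu},\tilde{\varepsilon}}}_{3}\left({\xi_{\hat{\varepsilon},\tilde{\varepsilon}}}_{3}-{\xi_{\hat{\mu},\tilde{\varepsilon}}}_{3}\right)}{d_{\hat{\mu}}}=\frac{{\chi_{\hat{\mu}',\tilde{\varepsilon}}}_{3}\left({\xi_{\hat{\varepsilon},\tilde{\varepsilon}}}_{3}-{\xi_{\hat{\mu}',\tilde{\varepsilon}}}_{3}\right)}{d_{\hat{\mu}'}},
\end{equation*}
and now the dangerous factors cancel against those inside $d_{\hat{\mu}}$, $d_{\hat{\mu}'}$ (the common prefactor $-i\lvert\tilde{\xi}\rvert_{\tilde{\mu}}$ drops as well), leaving exactly \eqref{chimumu'id}. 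The paper carries out this elimination in one stroke: it writes $E_{3}^{(0)}(G)$ explicitly in terms of $\ast_{\iota^{\ast}\hat{\mu}}(\tilde{\xi}\wedge G)$ and $\ast_{\iota^{\ast}\hat{\mu}}(\tilde{\xi}_{\tilde{\varepsilon}\perp}\wedge G)$, notes that the coefficient of the latter is common to both parameter sets, and tests with $G=\tilde{\xi}_{\tilde{\varepsilon}\perp}$, which annihilates that term. With this step supplied, your argument is complete and coincides in essence with the paper's; the symmetric derivation of \eqref{chiepsilonepsilon'id} then goes through as you describe.
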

\begin{proof}
	We will show the derivation of the identity \eqref{chimumu'id} as \eqref{chiepsilonepsilon'id} is proven in a similar way. Let us fix boundary normal coordinates for $\hat{\varepsilon}$. We aim to write $\tilde{\chi}_{\hat{\mu}}$ as
	\begin{equation}\label{chimuxibasis}
	\tilde{\chi}_{\hat{\mu}}= a \tilde{\xi}+ b \tilde{\xi}_{\tilde{\varepsilon} \perp}
	\end{equation}
for some constants $a$, $b$ to be determined and $\tilde{\xi}_{\tilde{\varepsilon} \perp}:= \ast_{\tilde{\varepsilon}}\tilde{\xi}$ satisfying
	\begin{equation}
	\langle \tilde{\xi}, \tilde{\xi}_{\tilde{\varepsilon} \perp} \rangle_{\tilde{\varepsilon}}=0, \quad \tilde{\xi} \wedge \tilde{\xi}_{\tilde{\varepsilon} \perp}= \lvert \tilde{\xi}\rvert_{\tilde{\varepsilon}}^{2} \ dv_{\tilde{\varepsilon}},
	\end{equation}
	where $dv_{\tilde{\varepsilon}}$ denotes the volume form at the boundary with respect to the $\tilde{\varepsilon}$ metric. Considering the inner product with respect to the $\tilde{\varepsilon}$ metric of $\tilde{\xi}$ with \eqref{chimuxibasis}, we obtain
	\begin{equation*}
	a= - \frac{{\xi_{\hat{\mu},\tilde{\varepsilon}}}_{3}{\chi_{\hat{\mu},\tilde{\varepsilon}}}_{3}}{\lvert \tilde{\xi} \rvert_{\tilde{\varepsilon}}^{2}},
	\end{equation*}
where we have used the fact that $\langle \chi_{\hat{\mu},\tilde{\varepsilon}},\xi_{\hat{\mu}}\rangle_{\hat{\varepsilon}}=0$. The wedge product of $\tilde{\xi}$ with \eqref{chimuxibasis} results to
		\begin{equation*}
	b= - i\frac{\sqrt{\lvert \tilde{\varepsilon}\rvert}\lvert \xi_{\hat{\mu}}\rvert_{\hat{\varepsilon}}^{2}\lvert \tilde{\xi}\rvert_{\tilde{\mu}}}{\sqrt{\lvert \tilde{\mu}\rvert}\lvert \tilde{\xi}\rvert_{\tilde{\varepsilon}}^{2}}.
	\end{equation*}
	Now let us write the principal symbol of the normal component of $E$ as
	\begin{equation*}
	E_{3}^{(0)}= a_{\hat{\mu}}{\chi_{\hat{\mu},\tilde{\varepsilon}}}_{3}+ b_{\hat{\mu}} {\xi_{\hat{\varepsilon},\tilde{\varepsilon}}}_{3}.
	\end{equation*}
	Making use of equations \eqref{abepsilonconst} as well as equation \eqref{chimuxibasis} we arrive at
	\begin{equation*}
E_{3}^{(0)}(G)= \frac{{\chi_{\hat{\mu},\tilde{\varepsilon}}}_{3} }{\lvert \tilde{\xi}\rvert_{\tilde{\mu}}\lvert \tilde{\xi}\rvert_{\tilde{\varepsilon}}({\xi_{\hat{\mu},\tilde{\varepsilon}}}_{3}+ {\xi_{\hat{\varepsilon},\tilde{\varepsilon}}}_{3}) } \ast_{\iota^*\hat{\mu}}\left (\tilde{\xi}\wedge G\right )  + \frac{i \sqrt{\lvert \tilde{\varepsilon}\rvert}}{\sqrt{\lvert \tilde{\mu}\rvert}\lvert\tilde{\xi}\rvert_{\tilde{\varepsilon}}} \ast_{\iota^*\hat{\mu}} \left (\tilde{\xi}_{\tilde{\varepsilon} \perp} \wedge G \right ).
\end{equation*}
Setting this equal to $E_{3}'^{(0)}(G)$ with $G = \tilde{\xi}_{\tilde{\varepsilon}\perp}$ and using Theorem \ref{fieldsequalattheboundary} completes the proof.
\end{proof}

\subsection{Proof of Theorem \ref{normalmuthm}} \label{normalmusec}

Using Lemma \ref{xilemma} and \ref{chilemma}, the imaginary parts of \eqref{chimumu'id} and \eqref{chiepsilonepsilon'id}  are provided by, respectively,
	\begin{equation}\label{impartsyst}
	\begin{split}
\sigma_{3\tilde{j}\tilde{i}}\tilde{\varepsilon}^{\tilde{i}\tilde{d}}\xi_{\tilde{d}}\left(  \langle \nu,\tilde{\xi}\rangle_{\hat{\mu}}\hat{\mu}'^{3\tilde{j}}-\langle \nu,\tilde{\xi}\rangle_{\hat{\mu}'}\hat{\mu}^{3\tilde{j}}
\right)&=\frac{\lvert\tilde{\xi}\rvert_{\tilde{\varepsilon}}}{\lvert\tilde{\xi}\rvert_{\tilde{\mu}}} \sigma_{3\tilde{i}\tilde{j}}\tilde{\varepsilon}^{\tilde{i}\tilde{d}}\xi_{\tilde{d}}\tilde{\mu}^{\tilde{j}\tilde{b}}\xi_{\tilde{b}}\left(\hat{\mu}'^{33}\sqrt{\hat{\mu}^{33}} -\hat{\mu}^{33}\sqrt{\hat{\mu}'^{33}} \right),\\
\sigma_{3\tilde{j}\tilde{i}}\tilde{\mu}^{\tilde{i}\tilde{d}}\xi_{\tilde{d}}\left(  \langle \nu,\tilde{\xi}\rangle_{\hat{\mu}}\hat{\mu}'^{3\tilde{j}}-\langle \nu,\tilde{\xi}\rangle_{\hat{\mu}'}\hat{\mu}^{3\tilde{j}}
\right)&=\frac{\lvert\tilde{\xi}\rvert_{\tilde{\mu}}}{\lvert\tilde{\xi}\rvert_{\tilde{\varepsilon}}} \sigma_{3\tilde{i}\tilde{j}}\tilde{\varepsilon}^{\tilde{i}\tilde{d}}\xi_{\tilde{d}}\tilde{\mu}^{\tilde{j}\tilde{b}}\xi_{\tilde{b}}\left(\hat{\mu}'^{33}\sqrt{\hat{\mu}^{33}} -\hat{\mu}^{33}\sqrt{\hat{\mu}'^{33}} \right).
\end{split}
\end{equation}
Combining the above displayed formulae we arrive at
\begin{equation}\label{parallelvecteq}
\sigma_{3\tilde{j}\tilde{i}}\left(\langle \nu,\tilde{\xi}\rangle_{\hat{\mu}}\hat{\mu}'^{3\tilde{j}}-  \langle \nu,\tilde{\xi}\rangle_{\hat{\mu}'}\hat{\mu}^{3\tilde{j}}\right) \left( \lvert\tilde{\xi}\rvert_{\tilde{\varepsilon}}^{2} \tilde{\mu}^{\tilde{i}\tilde{d}}\xi_{\tilde{d}}-\lvert\tilde{\xi}\rvert_{\tilde{\mu}}^{2}\tilde{\varepsilon}^{\tilde{i}\tilde{d}}\xi_{\tilde{d}}
\right)=0.
\end{equation}
Now we are going distinguish the following two different cases. 
\begin{enumerate}
	\item There exists $a>0$ such that \label{case1}
	\begin{equation}\label{mutildeepsilontildemultiples}
	\tilde{\mu}=a \tilde{\varepsilon}.
	\end{equation} 
	\item The metrics $\tilde{\mu}$ and $\tilde{\varepsilon}$ are not multiples.\label{case2}
\end{enumerate}
Starting with case \ref{case1}, we have that $\sigma_{3\tilde{i}\tilde{j}}\tilde{\varepsilon}^{\tilde{i}\tilde{d}}\xi_{\tilde{d}}\tilde{\mu}^{\tilde{j}\tilde{b}}\xi_{\tilde{b}}=0$ and the first equation in \eqref{impartsyst} reduces to
\begin{equation*}
\sigma_{3\tilde{j}\tilde{i}}\tilde{\varepsilon}^{\tilde{i}\tilde{d}}\xi_{\tilde{d}}\left(  \langle \nu,\tilde{\xi}\rangle_{\hat{\mu}}\hat{\mu}'^{3\tilde{j}}-\langle \nu,\tilde{\xi}\rangle_{\hat{\mu}'}\hat{\mu}^{3\tilde{j}}
\right)=0.
\end{equation*}
This implies that if $\left(  \langle \nu,\tilde{\xi}\rangle_{\hat{\mu}}\hat{\mu}'^{3\tilde{j}}-\langle \nu,\tilde{\xi}\rangle_{\hat{\mu}'}\hat{\mu}^{3\tilde{j}}
\right) \neq 0$, there exists $b\in \mathbb{R}$ such that
\begin{equation*}
\tilde{\varepsilon}^{\tilde{j}\tilde{d}}\xi_{\tilde{d}}= b  \left( \langle \nu,\tilde{\xi}\rangle_{\hat{\mu}} \hat{\mu}'^{3\tilde{j}}-\langle \nu,\tilde{\xi}\rangle_{\hat{\mu}'}\hat{\mu}^{3\tilde{j}}
\right),
\end{equation*}
which cannot hold, since multiplying the above by $\xi_{\tilde{j}}$ leads to $\lvert\tilde{\xi}\rvert_{\varepsilon}=0$. Therefore, $\left( \langle \nu,\tilde{\xi}\rangle_{\hat{\mu}} \hat{\mu}'^{3\tilde{j}}-\langle \nu,\tilde{\xi}\rangle_{\hat{\mu}'}\hat{\mu}^{3\tilde{j}}
\right)=0$ or equivalently
\begin{equation*}
\hat{\mu}'^{3\tilde{j}}=c\hat{\mu}^{3\tilde{j}}, \quad \mbox{at} \ x_{3}=0.
\end{equation*}
for $c\in \mathbb{R}$.

It remains to work with case \ref{case2}, where $\tilde{\mu}$ and $\tilde{\varepsilon}$ are assumed not to be multiples. Under this assumption, equation \eqref{parallelvecteq} yields, at least for all $\tilde{\xi}$ in an open set, to
\begin{equation*}
\langle \nu,\tilde{\xi} \rangle_{\hat{\mu}}\hat{\mu}'^{3\tilde{j}}- \langle \nu,\tilde{\xi} \rangle_{\hat{\mu}'} \hat{\mu}^{3\tilde{j}}= \beta \left(  \tilde{\mu}^{\tilde{j}\tilde{a}}\xi_{\tilde{a}}\lvert\tilde{\xi}\rvert_{\tilde{\varepsilon}}^{2}-\tilde{\varepsilon}^{\tilde{j}\tilde{a}}\xi_{\tilde{a}}\lvert\tilde{\xi}\rvert_{\tilde{\mu}}^{2}
\right),
\end{equation*}
where $\beta \in \mathbb{R}$. Combining the above with the equations in \eqref{impartsyst} we get
\begin{equation}\label{parallelvecteq2}
\langle \nu,\tilde{\xi}\rangle_{\hat{\mu}}\hat{\mu}'^{3\tilde{j}}- \langle \nu,\tilde{\xi}\rangle_{\hat{\mu}'} \hat{\mu}^{3\tilde{j}}=\left( \hat{\mu}^{33}\sqrt{\hat{\mu}'^{33}}-\hat{\mu}'^{33}\sqrt{\hat{\mu}^{33}}\right) \left(  \tilde{\mu}^{\tilde{j}\tilde{a}}\xi_{\tilde{a}}\frac{\lvert\tilde{\xi}\rvert_{\tilde{\varepsilon}}}{\lvert\tilde{\xi}\rvert_{\tilde{\mu}}}-\tilde{\varepsilon}^{\tilde{j}\tilde{a}}\xi_{\tilde{a}}\frac{\lvert\tilde{\xi}\rvert_{\tilde{\mu}}}{\lvert\tilde{\xi}\rvert_{\tilde{\varepsilon}}}
\right).
\end{equation}
Differentiating \eqref{parallelvecteq2} with respect to $\tilde{\xi}$ results to
\begin{equation*}
\begin{split}
\hat{\mu}'^{3\tilde{j}}\hat{\mu}^{3\tilde{a}}-  \hat{\mu}^{3\tilde{j}}\hat{\mu}'^{3\tilde{a}}&=\left( \hat{\mu}^{33}\sqrt{\hat{\mu}'^{33}}-\hat{\mu}'^{33}\sqrt{\hat{\mu}^{33}}\right)\left(\tilde{\mu}^{\tilde{j}\tilde{a}}\frac{\lvert\tilde{\xi}\rvert_{\tilde{\varepsilon}}}{\lvert\tilde{\xi}\rvert_{\tilde{\mu}}}+ \frac{\tilde{\mu}^{\tilde{j}\tilde{d}}\xi_{\tilde{d}}}{\lvert\tilde{\xi}\rvert_{\tilde{\mu}}}\frac{\tilde{\varepsilon}^{\tilde{a}\tilde{b}}\xi_{\tilde{b}}}{\lvert\tilde{\xi}\rvert_{\tilde{\varepsilon}}}\right.\\
&\left. - \tilde{\mu}^{\tilde{j}\tilde{d}}\xi_{\tilde{d}}\lvert\tilde{\xi}\rvert_{\tilde{\varepsilon}} \frac{\tilde{\mu}^{\tilde{b}\tilde{a}}\xi_{\tilde{b}}}{\lvert\tilde{\xi}\rvert_{\tilde{\mu}}^{3}}-\tilde{\varepsilon}^{\tilde{j}\tilde{a}}\frac{\lvert\tilde{\xi}\rvert_{\tilde{\mu}}}{\lvert\tilde{\xi}\rvert_{\tilde{\varepsilon}}}- \frac{\tilde{\varepsilon}^{\tilde{j}\tilde{d}}\xi_{\tilde{d}}}{\lvert\tilde{\xi}\rvert_{\tilde{\varepsilon}}}\frac{\tilde{\mu}^{\tilde{a}\tilde{b}}\xi_{\tilde{b}}}{\lvert\tilde{\xi}\rvert_{\tilde{\mu}}}+ \tilde{\varepsilon}^{\tilde{j}\tilde{d}}\xi_{\tilde{d}}\lvert\tilde{\xi}\rvert_{\tilde{\mu}} \frac{\tilde{\varepsilon}^{\tilde{b}\tilde{a}}\xi_{\tilde{b}}}{\lvert\tilde{\xi}\rvert_{\tilde{\varepsilon}}^{3}}\right).
\end{split}
\end{equation*}
Let $\eta \in T^*_x(\partial M)$ be linearly independent from $\tilde{\xi}$. Multiplying both sides of the above displayed equation by $\eta$, we get
\begin{equation}\label{xietaeq}
\left( \hat{\mu}^{33}\sqrt{\hat{\mu}'^{33}}-\hat{\mu}'^{33}\sqrt{\hat{\mu}^{33}}\right)\left[\frac{\lvert\tilde{\xi}\rvert_{\tilde{\varepsilon}}}{\lvert\tilde{\xi}\rvert_{\tilde{\mu}}} \left( \lvert\eta\rvert_{\tilde{\mu}}^{2}-
\frac{\langle \tilde{\xi},\eta \rangle _{\tilde{\mu}}^{2}}{\lvert\tilde{\xi}\rvert_{\tilde{\mu}}^{2}}\right)-\frac{\lvert\tilde{\xi}\rvert_{\tilde{\mu}}}{\lvert\tilde{\xi}\rvert_{\tilde{\varepsilon}}} \left( \lvert\eta\rvert_{\tilde{\varepsilon}}^{2}-
\frac{\langle \tilde{\xi},\eta \rangle _{\tilde{\varepsilon}}^{2}}{\lvert\tilde{\xi}\rvert_{\tilde{\varepsilon}}^{2}}\right)\right]=0.
\end{equation}
Assuming that the second factor in \eqref{xietaeq} vanishes and fixing $\eta$ such that
\begin{equation*}
\eta_{1}=-\xi_{2}, \quad \eta_{2}=\xi_{1}
\end{equation*}
we obtain
\begin{equation*}
\frac{\lvert\tilde{\xi}\rvert_{\tilde{\varepsilon}}^{4}}{\lvert\tilde{\xi}\rvert_{\tilde{\mu}}^{4}}= \frac{\lvert\tilde{\varepsilon}\rvert}{\lvert\tilde{\mu}\rvert}.
\end{equation*}
The above holds for $\tilde{\xi}$ in an open set, and we observe that the right hand-side does not depend on $\tilde{\xi}$. Therefore $\tilde{\varepsilon}$ should be a multiple of $\tilde{\mu}$, which contradicts to our initial assumption. Hence, the second factor in \eqref{xietaeq} cannot be zero
which leads to
\begin{equation*}
\hat{\mu}^{33}=\hat{\mu}'^{33}, \quad \mbox{at} \ x_{3}=0.
\end{equation*} 
Taking this into account and revisiting equation \eqref{parallelvecteq2} we have, assuming without loss of generality that $\hat{\mu}^{\tilde{j}3} \neq 0$,
\begin{equation*}
\hat{\mu}'^{\tilde{j}3}=c\hat{\mu}^{\tilde{j}3}, \quad \mbox{at} \ x_{3}=0,
\end{equation*}
for $c\in\mathbb{R}$. Next, the real part of \eqref{chimumu'id}, by virtue of the relationships between the normal components of $\hat{\mu}$ and $\hat{\mu}'$, becomes
	\begin{equation}\label{realpartmu2}
\begin{split}
\left(1-c \right)\langle \tilde{\xi},\nu \rangle _{\hat{\mu}}\sigma_{3\tilde{j}\tilde{i}}\tilde{\varepsilon}^{\tilde{i}\tilde{d}}\xi_{\tilde{d}} \tilde{\mu}^{\tilde{i}\tilde{b}}\xi_{\tilde{b}}
+\lvert\tilde{\xi}\rvert_{\tilde{\mu}}^{2}\left( c-1\right) \sigma_{3\tilde{j}\tilde{i}}\tilde{\varepsilon}^{\tilde{i}\tilde{d}}\xi_{\tilde{d}}\hat{\mu}^{3\tilde{j}}\\=\lvert\tilde{\xi}\rvert_{\tilde{\varepsilon}} \lvert\tilde{\xi}\rvert_{\tilde{\mu}}\sqrt{\hat{\mu}^{33}}\left( 1-c \right)\sigma_{3\tilde{j}\tilde{i}}\tilde{\varepsilon}^{\tilde{i}\tilde{d}}\xi_{\tilde{d}}\hat{\mu}^{3\tilde{j}},
\end{split}
\end{equation}
which choosing $\tilde{\xi}\neq 0$ such that
\begin{equation}\label{tildexieq}
\langle \nu,\tilde{\xi}\rangle_{\hat{\mu}}=0,
\end{equation}
simplifies to
\begin{equation*}
\left( c-1\right) \left( \lvert\tilde{\xi}\rvert_{\tilde{\mu}}^{2}+\lvert\tilde{\xi}\rvert_{\tilde{\varepsilon}} \lvert\tilde{\xi}\rvert_{\tilde{\mu}}\sqrt{\hat{\mu}^{33}}\right)\sigma_{3\tilde{j}\tilde{i}}\tilde{\varepsilon}^{\tilde{i}\tilde{d}}\xi_{\tilde{d}}\hat{\mu}^{3\tilde{j}}=0.
\end{equation*}
Since the vectors $\tilde{\varepsilon}^{\tilde{j}\tilde{d}}\xi_{\tilde{d}}$ and $\hat{\mu}^{3{\tilde{j}}}$ are not multiples of each other, we have
\begin{equation*}
\left( c-1\right) \left( \lvert\tilde{\xi}\rvert_{\tilde{\mu}}^{2}+\lvert\tilde{\xi}\rvert_{\tilde{\varepsilon}} \lvert\tilde{\xi}\rvert_{\tilde{\mu}}\sqrt{\hat{\mu}^{33}}\right)=0.
\end{equation*}
Since the second term on the left is positive, we deduce that $c=1$ which implies
\begin{equation*}
\hat{\mu}^{3\tilde{j}}=	\hat{\mu}'^{3\tilde{j}}, \quad \mbox{at} \ x_{3}=0.
\end{equation*}
This completes the proof of Theorem \ref{normalmuthm}.

\hfill\openbox

In the following lemma we show that when the different pairs of metrics agree at $x_{3}=0$ in boundary normal coordinates for $\hat{\varepsilon}$/$\hat{\varepsilon}'$ then the same holds for the electromagnetic fields $E$, $H$, $E'$, $H'$. This will be used in Section \ref{normalderivativessec}.
	\begin{lemma}\label{prinsymbEE'HH'equallemma}
		Let $(\hat{\varepsilon},\hat{\mu})$, $(\hat{\varepsilon}',\hat{\mu}')$ be two different sets of metrics and $E$, $H$, $E'$, $H'$ be the corresponding electric and magnetic fields. If $\hat{\varepsilon}=\hat{\varepsilon}'$ and $\hat{\mu}=\hat{\mu}'$ at $x_{3}=0$, in boundary normal coordinates for $\hat{\varepsilon}$/$\hat{\varepsilon}'$, then
		\begin{equation}
		E=E', \quad \mbox{at} \ x_{3}=0
		\end{equation}
		and
		\begin{equation}
		H=H', \quad \mbox{at} \ x_{3}=0.
		\end{equation}
	\end{lemma}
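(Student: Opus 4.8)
The plan is to derive the lemma from Theorem~\ref{fieldsequalattheboundary} rather than to re-run a direct computation with Maxwell's equations. That theorem already produces the agreement $E=E'$ at $x_3=0$ in boundary normal coordinates for $\hat\varepsilon$/$\hat\varepsilon'$, which is exactly the statement we want for the electric field, so the only real work is to obtain $H=H'$ in the same coordinates. The point is that Theorem~\ref{fieldsequalattheboundary} delivers $H=H'$ only in boundary normal coordinates for $\hat\mu$/$\hat\mu'$, and the extra strength of the present hypothesis, namely that the full metrics $\hat\mu$ and $\hat\mu'$ (and not merely their tangential parts $\tilde\mu,\tilde\mu'$) coincide at $x_3=0$, is what allows us to transport this identity into boundary normal coordinates for $\hat\varepsilon$/$\hat\varepsilon'$.

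First I would fix the common Dirichlet data, so that $\iota^{\ast}H=\iota^{\ast}H'$, and use the equality $\Lambda_{\hat\varepsilon}=\Lambda_{\hat\varepsilon'}$ (so that the hypotheses of Theorem~\ref{fieldsequalattheboundary} are met) to conclude $E=E'$ at $x_3=0$ in boundary normal coordinates for $\hat\varepsilon$/$\hat\varepsilon'$ and $H=H'$ at $x_3=0$ in boundary normal coordinates for $\hat\mu$/$\hat\mu'$. The first of these is precisely the conclusion sought for $E$, so it remains only to re-express the second in boundary normal coordinates for $\hat\varepsilon$/$\hat\varepsilon'$.

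For the transport step I would use the change of coordinates $\Psi$ (respectively $\Psi'$) from boundary normal coordinates for $\hat\varepsilon$ (respectively $\hat\varepsilon'$) to boundary normal coordinates for $\hat\mu$ (respectively $\hat\mu'$), whose Jacobian at $x_3=0$ is recorded in \eqref{DPSieqs}. Writing $\tilde H$, $\tilde H'$ for the representations of $H$, $H'$ in boundary normal coordinates for $\hat\mu$, $\hat\mu'$, the $1$-form transformation law gives $H=(D\Psi)^{T}\tilde H$ and $H'=(D\Psi')^{T}\tilde H'$ at the boundary. By \eqref{DPSieqs} the Jacobians $D\Psi$ and $D\Psi'$ depend at $x_3=0$ only on the entries $\hat\mu^{\tilde i 3}$, $\hat\mu^{33}$ and $\hat\mu'^{\tilde i 3}$, $\hat\mu'^{33}$ expressed in boundary normal coordinates for $\hat\varepsilon$; since $\hat\mu=\hat\mu'$ at $x_3=0$ by hypothesis, this forces $D\Psi=D\Psi'$ there. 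Combining $D\Psi=D\Psi'$ with $\tilde H=\tilde H'$ from the previous step yields $H=H'$ at $x_3=0$ in boundary normal coordinates for $\hat\varepsilon$/$\hat\varepsilon'$, which completes the proof.

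I expect the main obstacle to be the coordinate bookkeeping, and in particular recognising why full boundary agreement of $\hat\mu$ is needed. A direct approach in boundary normal coordinates for $\hat\varepsilon$ would read off $H_3$ from the normal component of $\ast_{\hat\mu}\mathrm{d}E=i\omega H$; because $\hat\mu$ is not diagonal in these coordinates, that expression involves the off-diagonal entries $\hat\mu^{\tilde i 3}$ multiplied by normal derivatives of $E$, which are not known to agree at the boundary a priori. Passing to boundary normal coordinates for $\hat\mu$, where these off-diagonal terms vanish, is exactly what lets Theorem~\ref{fieldsequalattheboundary} handle $H$, and the hypothesis $\hat\mu=\hat\mu'$ (rather than only $\tilde\mu=\tilde\mu'$, as would follow from Theorem~\ref{metricsequalinbncsthm} alone) is what guarantees the two coordinate changes have matching Jacobians at $x_3=0$ so that the transport back is valid.
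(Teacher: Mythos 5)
Your proposal is correct and follows essentially the same route as the paper: the paper's proof likewise invokes Theorem \ref{fieldsequalattheboundary} to get $E=E'$ at $x_{3}=0$ and $H=H'$ in boundary normal coordinates for $\hat{\mu}$/$\hat{\mu}'$, and then transports the latter back via the change of coordinates with Jacobian \eqref{DPSieqs}. Your write-up merely spells out the detail the paper leaves implicit, namely that $\hat{\mu}=\hat{\mu}'$ at $x_{3}=0$ forces $D\Psi=D\Psi'$ there, so the $1$-form transformation law yields $H=H'$ in boundary normal coordinates for $\hat{\varepsilon}$/$\hat{\varepsilon}'$.
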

\begin{proof}
	To prove the above we only need to consider the magnetic fields $H$ and $H'$ as the result for $E$, $E'$ is already proven in Lemma \ref{fieldsequalattheboundary}. Considering the change of coordinates \eqref{DPSieqs} and the fact that $H=H'$ in boundary normal coordinates for $\hat{\mu}$/$\hat{\mu}'$ completes the proof.
\end{proof}


	\section{Normal Derivatives of Electromagnetic Parameters}\label{normalderivativessec}
	

In this section, we apply the methods of \cite{Joshi} and \cite{Lionheart} to show inductively that when the metrics $(\hat{\varepsilon},\hat{\mu})$,$(\hat{\varepsilon}',\hat{\mu}')$ are equal at $x_{3}=0$, then, for any $\kappa>0$, $\partial_{x_{3}}^{\kappa}\hat{\varepsilon}=\partial_{x_{3}}^{\kappa}\hat{\varepsilon}'$, $\partial_{x_{3}}^{\kappa}\hat{\mu}=\partial_{x_{3}}^{\kappa}\hat{\mu}'$ at $x_{3}=0$ which proves Theorem \ref{normalderivativesthm}.

Let us consider the pseudodifferential operators $B(x,D_{\tilde{x}})$ and $B'(x,D_{\tilde{x}})$ associated with the metrics $(\hat{\varepsilon}, \hat{\mu})$ and $(\hat{\varepsilon}', \hat{\mu}')$ respectively and define $Y=B'-B$ $\in$ $\Psi DO^{(1)}$. Making use of the Riccati equation \eqref{RiccatiB} and the corresponding Riccati equation for the $'-$ parameters we have that $Y$ satisfies
	\begin{equation}\label{YRiccati}
\begin{split}
&T_{H'}D_{x^{3}}Y+ ( A_{H'}+ G_{H'})Y+ T_{H'}Y^{2}+ T_{H'}BY +T_{H'}YB=\\&\hskip0.5cm [(A_{H}-A_{H'})+(G_{H}-G_{H'})+(T_{H}-T_{H'})B]B + (R_{H}-R_{H'})\\&\hskip0.5cm+ (Q_{H}-Q_{H'}) +(F_{H}-F_{H'})+(T_{H}-T_{H'})D_{x^{3}}B.
\end{split}
\end{equation}
Similarly, $J$ $\in$ $\Psi DO^{(1)}$ realised by $J=C'-C$, satisfies
	\begin{equation}\label{JRiccati}
\begin{split}
&T_{E'}D_{x^{3}}J+ ( A_{E'}+ G_{E'})J+ T_{E'}J^{2}+ T_{E'}CJ +T_{E'}JC=\\&\hskip0.5cm [(A_{E}-A_{E'})+(G_{E}-G_{E'})+(T_{E}-T_{E'})C]C + (R_{E}-R_{E'})\\&\hskip0.5cm + (Q_{E}-Q_{E'}) +(F_{E}-F_{E'})+(T_{E}-T_{E'})D_{x^{3}}C.
\end{split}
\end{equation}
Let us now define a class of pseudodifferential operators that depends smoothly on the normal distance from the boundary with respect to the $\hat{\varepsilon}$ metric.
\begin{definition}[$\Psi DO^{(m,p)}$]\label{familyofPsiDosdef}
	We say $P$ $\in\Psi DO^{(m,p)}(\partial M, \mathbb{R}_{+})$ if it is a family of pseudodifferential operators of order $m$ on $\partial M$, varying smoothly up to $x_{3}=0$ such that
	\begin{equation*}
	P=\sum_{j=0}^{p}x_{3}^{j}P^{(m-p+j)},
	\end{equation*}
	with $P^{(j)}$ $\in$ $\Psi DO^{(m-p+j)}(\partial M)$.
\end{definition}
The calculus of $\Psi DO$s as well as the definition immediately implies the following results which are collected here for later use.
\begin{lemma}[Calculus for $\Psi DO^{(m,p)}$] \label{PsiDOcalclem}
Suppose that $P \in \Psi DO^{(m,p)}(\partial M, \mathbb{R}_{+})$ and $P' \in \Psi DO^{(m',p')}(\partial M, \mathbb{R}_{+})$. Then $P P' \in \Psi DO^{(m+m',p+p')}(\partial M, \mathbb{R}_{+})$. Also, $P \in \Psi DO^{(m+1,p+1)}(\partial M, \mathbb{R}_{+})$ and $x_3 P \in \Psi DO^{(m,p+1)}(\partial M, \mathbb{R}_{+})$
\end{lemma}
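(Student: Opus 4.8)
The plan is to derive all three assertions directly from the explicit polynomial structure in $x_3$ recorded in Definition \ref{familyofPsiDosdef}, combined with the standard composition calculus for pseudodifferential operators on the closed manifold $\partial M$. The one conceptual point I would keep in mind is that each factor $x_3^j$ is merely multiplication by a scalar function of the normal variable; for fixed $x_3$ it is a number and so commutes freely with every tangential operator. Consequently no commutator terms arise, and the whole argument reduces to bookkeeping of powers of $x_3$ together with the additivity of orders under composition. It is worth isolating the invariant that makes the index counting transparent: in the class $\Psi DO^{(m,p)}$ the coefficient of $x_3^j$ has order exactly $m-p+j$, so the difference (top order) minus (polynomial degree in $x_3$), namely $m-p$, is the order of the $x_3^0$ coefficient.

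I would dispatch the two inclusions first, as they are purely formal. Writing $P = \sum_{j=0}^{p} x_3^j P^{(m-p+j)}$, I would append a vanishing top term to get $P = \sum_{j=0}^{p+1} x_3^j Q^{(j)}$ with $Q^{(j)} = P^{(m-p+j)}$ for $j \le p$ and $Q^{(p+1)} = 0$. Since $(m+1)-(p+1) = m-p$, the order demanded of the $x_3^j$ coefficient for membership in $\Psi DO^{(m+1,p+1)}$ is $(m+1)-(p+1)+j = m-p+j$, which matches the order of $Q^{(j)}$ (the zero operator having any order); hence $P \in \Psi DO^{(m+1,p+1)}(\partial M,\mathbb{R}_+)$. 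For the third claim I would re-index: $x_3 P = \sum_{j=0}^{p} x_3^{j+1} P^{(m-p+j)} = \sum_{l=1}^{p+1} x_3^{l} P^{(m-p+l-1)}$, and since $m-(p+1)+l = m-p+l-1$ this exhibits $x_3 P$ as an element of $\Psi DO^{(m,p+1)}(\partial M,\mathbb{R}_+)$ with vanishing $l=0$ coefficient.

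The product statement is the substantive one, though still routine. Taking $P = \sum_{i=0}^{p} x_3^i P^{(m-p+i)}$ and $P' = \sum_{j=0}^{p'} x_3^j {P'}^{(m'-p'+j)}$, I would compose fiberwise in $x_3$ and pull the scalar powers to the front to obtain $PP' = \sum_{i=0}^{p}\sum_{j=0}^{p'} x_3^{i+j}\, P^{(m-p+i)} {P'}^{(m'-p'+j)}$. By the composition calculus on $\partial M$ each product $P^{(m-p+i)}{P'}^{(m'-p'+j)}$ is a pseudodifferential operator of order $(m-p+i)+(m'-p'+j)$, depending smoothly on the parameter $x_3$. Setting $l = i+j$, which ranges over $0,\dots,p+p'$, the coefficient of $x_3^{l}$ is the finite sum $\sum_{i+j=l} P^{(m-p+i)}{P'}^{(m'-p'+j)}$, each summand of order $(m+m')-(p+p')+l$. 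This is precisely the order required of the $x_3^l$ coefficient for membership in $\Psi DO^{(m+m',p+p')}(\partial M,\mathbb{R}_+)$, so the claim follows.

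As for the main obstacle, I do not expect a genuine analytic difficulty, which is why the text states the result is immediate; the only place demanding care is the order accounting, in particular checking that summing operators of the common order $(m+m')-(p+p')+l$ over the finitely many splittings $i+j=l$ does not raise the order. For completeness I would also note that smooth dependence of the coefficients on $x_3$ is preserved under composition, so the family $PP'$ indeed varies smoothly up to $x_3 = 0$ as Definition \ref{familyofPsiDosdef} requires.
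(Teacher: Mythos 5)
Your proof is correct and follows exactly the route the paper intends: the paper states that the lemma follows ``immediately'' from the definition and the standard $\Psi DO$ calculus and gives no further argument, and your write-up is precisely that routine verification (re-indexing powers of $x_3$, using that tangential operators commute with multiplication by functions of $x_3$ alone, and that each coefficient of $x_3^l$ in the product has the required order $(m+m')-(p+p')+l$). Nothing is missing; you have simply made explicit the bookkeeping the paper leaves to the reader.
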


\begin{definition}[Principal symbol in $\Psi DO^{(m,p)}$] \label{princsymbdef}
	For $P \in \Psi DO^{(m,p)}(\partial M, \mathbb{R}_{+})$, we define the vector of principal symbols of $P=\sum_{j=0}^{p}x_{3}^{j}P^{(m-p+j)}$ at $x_{3}=0$, by taking 
	\begin{equation*}
	\sigma_{p,l}(P) = \sigma_p\left (\frac{1}{l!}\frac{d^{l}}{dx_{3}^{l}}P\Big\vert_{x_{3}=0}\right ),
	\end{equation*}
	where $0\leq l \leq p$.
	More particularly, we have that
	\begin{equation*}
	\sigma_p(P) = \left (\sigma_p\left (P^{(m-p)}\Big\vert_{x_{3}=0}\right ),\sigma_p\left (P^{(m-p+1)}\Big\vert_{x_{3}=0}\right ),\dots, \sigma_p\left (P^{(m)}\Big\vert_{x_{3}=0}\right )\right ).
	\end{equation*}
\end{definition}

%

We notice that when the metrics $(\hat{\varepsilon},\hat{\mu})$,$(\hat{\varepsilon}',\hat{\mu}')$ are equal at $x_{3}=0$, the principal symbols of $B$ and $B'$ agree at the boundary. Hence $Y$ $\in$ $\Psi DO^{(0)}(\partial M)$, or in terms of the class of pseudodifferential operators given in Definition \ref{familyofPsiDosdef}, $Y=x_{3}Y^{(1)}+Y^{(0)}$,
where, restricted to $\partial M$, $Y^{(1)}\in\Psi DO^{(1)}(\partial M)$ and $Y^{(0)}\in\Psi DO^{(0)}(\partial M)$. This is equivalent to $Y \in \Psi DO^{(1,1)}(\partial M,\mathbb{R}^+)$ and correspondingly, for the pseudodifferential operator $J$, we have $J\in \Psi DO^{(1,1)}(\partial M,\mathbb{R}^+)$. Also, defining $\tilde{H}=H'-H$ we have, by Lemma \ref{prinsymbEE'HH'equallemma}, that $\tilde{H}=0$ at $x_{3}=0$ which allows us to write $\tilde{H}\in x_3\Psi DO^{(0,0)}(\partial M,\mathbb{R}^+) \subset \Psi DO^{(0,1)}(\partial M,\mathbb{R}^+)$. Similarly, writing $\tilde{E}=E'-E$, then $\tilde{E}$ is in the same space.

Aiming to prove Theorem \ref{normalderivativesthm} inductively, we calculate the normal derivatives of order $a$, $a\geq 1$, of $\tilde{H}$ and $\tilde{E}$ at $x_{3}=0$. In Theorems \ref{Dx3Hthm} and \ref{Dx3Ethm} we express these derivatives in terms of the pseudodifferential operators $B$, $Y$ and $C$, $J$, respectively. In Theorems \ref{Ythm} and \ref{Jthm} we use the Maxwell's and divergence equations to write $D_{x_{3}}^{a}\tilde{H}$ and $D_{x_{3}}^{a}\tilde{E}$ in terms of the normal derivatives of $\hat{\varepsilon}$, $\hat{\mu}$, $\hat{\varepsilon}'$, $\hat{\mu}'$ in boundary normal coordinates for $\hat{\varepsilon}$/$\hat{\varepsilon}'$ at $x_{3}=0$. Then, we use the expressions derived for $D_{x_{3}}^{a}\tilde{H}$ and $D_{x_{3}}^{a}\tilde{E}$ at $x_{3}=0$ to determine the vectors of principal symbols of the Riccati equations \eqref{YRiccati} and \eqref{JRiccati}, in the sense of Definition \ref{princsymbdef}.

\begin{theorem}\label{Dx3Hthm}
	Let $(\hat{\varepsilon},\hat{\mu})$, $(\hat{\varepsilon}',\hat{\mu}')$ be two different sets of electromagnetic parameters and $H$, $H'$ the associated magnetic fields, respectively and let $\tilde{H}=H'-H$. Assume that $B$, $B'$ are the factorisation operators corresponding to the magnetic fields $H$, $H'$ and let $\tilde{H} = H'-H$, $Y=B'-B$. Let us fix boundary normal coordinates for $\hat{\varepsilon}$/$\hat{\varepsilon}'$. If $Y \in \Psi DO^{(1,\kappa)}(\partial M,\mathbb{R}^+)$ and $\tilde{H}=0$ at $x_{3}=0$,
then we have $\tilde{H} \in \Psi DO^{(0,\kappa)}(\partial M,\mathbb{R}^+)$
	and for $1\leq a$
	\begin{equation}\label{Dx3aH}
	\sigma_{p}\left (D_{x_{3}}^{a}\tilde{H} \right )= \sum_{j=0}^{a-1}\sum_{n=0}^{j} \left(\begin{matrix}
	j\\
	n
	\end{matrix}\right)(B^{(1)})^{a-1-j}\sigma_{p}\left (D_{x_{3}}^{j-n}Y \right )(B^{(1)})^{n}H^{(0)}
	\end{equation}
	at $x_{3}=0$.
\end{theorem}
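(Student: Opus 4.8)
The plan is to turn the factorisation of Theorem~\ref{Dx3Htheorem} into a first-order recursion for $\tilde H = H'-H$ and then iterate it, retaining at each stage only the principal symbols at $x_3=0$ and using the grading of $\Psi DO^{(m,p)}$ (Definition~\ref{familyofPsiDosdef}, Lemma~\ref{PsiDOcalclem}) to discard lower-order debris. First I would apply Theorem~\ref{Dx3Htheorem} to each set of parameters to obtain $D_{x_3}H = BH$ and $D_{x_3}H' = B'H'$ modulo smoothing. Writing $B' = B+Y$ and $H' = H+\tilde H$ and subtracting produces the fundamental recursion
\[
D_{x_3}\tilde H = B\tilde H + YH + Y\tilde H ,
\]
valid modulo smoothing. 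Since $B$ and $Y$ are tangential and $\tilde H|_{x_3=0}=0$, the two terms carrying a factor of $\tilde H$ vanish on the boundary, so the $a=1$ case already gives $\sigma_p(D_{x_3}\tilde H)=\sigma_p(Y)H^{(0)}$, which is exactly \eqref{Dx3aH} for $a=1$.

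The core is an induction on $a$ that proves \eqref{Dx3aH} while simultaneously recording that $\sigma_p(D_{x_3}^a\tilde H)$ has order $a-\kappa$. I would apply $D_{x_3}^a$ to the recursion, evaluate at $x_3=0$, and expand every product by the Leibniz rule. An auxiliary fact is needed here, namely $\sigma_p(D_{x_3}^mH)=(B^{(1)})^mH^{(0)}$, which I would prove by iterating $D_{x_3}H=BH$ and observing that each derivative landing on $B$ lowers the order by one and so cannot reach the principal part. Three order counts then prune the expansion: in $D_{x_3}^a(B\tilde H)$ only the summand with no derivative on $B$ is of top order $a+1-\kappa$ and contributes $B^{(1)}\sigma_p(D_{x_3}^a\tilde H)$; every summand of $D_{x_3}^a(Y\tilde H)$ has order $a+1-2\kappa$ and hence drops since $\kappa\ge 1$; and every summand of $D_{x_3}^a(YH)$ survives, yielding $\sum_{k=0}^a \binom{a}{k}\sigma_p(D_{x_3}^kY)(B^{(1)})^{a-k}H^{(0)}$ once the auxiliary fact is inserted. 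Collecting these gives the single-step relation
\[
\sigma_p(D_{x_3}^{a+1}\tilde H) = B^{(1)}\sigma_p(D_{x_3}^a\tilde H) + \sum_{k=0}^a \binom{a}{k}\sigma_p(D_{x_3}^kY)(B^{(1)})^{a-k}H^{(0)} .
\]

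To close the induction I would substitute the inductive form of $\sigma_p(D_{x_3}^a\tilde H)$ into this relation: multiplication by $B^{(1)}$ reproduces exactly the $0\le j\le a-1$ terms of the instance of \eqref{Dx3aH} with $a+1$ in place of $a$, while the sum over $k$ matches the remaining $j=a$ term after the reindexing $n=a-k$ and the symmetry $\binom{a}{a-n}=\binom{a}{n}$. Since each term produced carries order $a-\kappa$, the orders of $D_{x_3}^a\tilde H|_{x_3=0}$ for $0\le a\le \kappa$ are precisely the symbol orders $-\kappa,\dots,0$ demanded by Definition~\ref{familyofPsiDosdef}, which certifies $\tilde H\in\Psi DO^{(0,\kappa)}$ as a byproduct. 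The main obstacle I anticipate is exactly this order bookkeeping in the Leibniz expansions: one must check, using the definition of $\Psi DO^{(m,p)}$ together with the inductively known orders $j-\kappa$ of the lower derivatives $D_{x_3}^j\tilde H$, that every term with a derivative on $B$ or with a surviving factor of $\tilde H$ is of strictly lower order and therefore invisible to the principal symbol; once this is in place the remaining combinatorial identity is routine.
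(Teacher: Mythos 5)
Your proposal is correct and takes essentially the same route as the paper: both start from the recursion $D_{x_3}\tilde{H} = B\tilde{H} + YH + Y\tilde{H}$ obtained by subtracting the two instances of Theorem \ref{Dx3Htheorem}, then induct on $a$ using the Leibniz rule, the identity $\sigma_p(D_{x_3}^{n}H)=(B^{(1)})^{n}H^{(0)}$, and the order grading of $\Psi DO^{(m,p)}$ to discard all terms carrying derivatives of $B$ or surviving factors of $\tilde{H}$, arriving at the double sum \eqref{Dx3aH} and the membership $\tilde{H} \in \Psi DO^{(0,\kappa)}(\partial M,\mathbb{R}^+)$. The only difference is organizational: the paper runs the induction at the operator level, tracking explicit remainders $\tilde{R}_a$, $R_a$, and passes to principal symbols once at the end, while you iterate a principal-symbol recursion directly and close it with the binomial reindexing, which is an equivalent bookkeeping of the same argument.
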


\begin{proof}
Making use of Theorem \ref{Dx3Htheorem} we have that
	\begin{equation} \label{Indc11}
	D_{x^3} \tilde{H} = YH+ B\tilde{H}+ Y\tilde{H}
	\end{equation}
modulo smoothing. In the case $\kappa = 1$, by taking the principal symbol of \eqref{Indc11} the proof is complete. So assume that $\kappa>1$. Starting from this, we prove inductively that for $1\leq a \leq \kappa$,
\begin{equation}\label{Indc1}
D_{x^3}^a \tilde{H} = \sum_{j=0}^{a-1} B^{a-1-j} D_{x_3}^j(YH) + B^a \tilde{H} +\tilde{R}_a \tilde{H} + R_a
\end{equation}
where $\tilde{R}_a \in \Psi DO^{(a-1,0)}(\partial M,\mathbb{R}^+)$ and $R_a \in \Psi DO^{(a,\kappa+1)}(\partial M,\mathbb{R}^+)$. Note that the case $a =1$ of \eqref{Indc1} is already proven from \eqref{Indc11} with $\tilde{R}_1 = 0$ and $R_1 = Y\tilde{H}$. 
	
Now assume, by induction, that \eqref{Indc1} is true for some $1\leq a$. Using Pascal's triangle we obtain 
\begin{equation}\label{Dx3Y}
D_{x_{3}}^{j}(YH)=\sum_{n=0}^{j}\left(\begin{matrix}
j\\
n
\end{matrix}\right) (D_{x_{3}}^{j-n}Y) B^{n}H,
\end{equation}
and applying Lemma \ref{PsiDOcalclem} to each term separately conclude that $D_{x_{3}}^{j}(YH) \in \Psi DO^{j+1,\kappa}(\partial M,\mathbb{R}^+)$. Therefore, applying $D_{x^3}$ to \eqref{Indc1}
gives
\[
D_{x^3}^{a+1} \tilde{H} = \sum_{j=0}^{a-1} B^{a-1-j} D_{x_3}^{j+1}(YH) + B^a D_{x^3} \tilde{H} +\tilde{R}_{a+1} \tilde{H}+R_{a+1}
\]
where  $\tilde{R}_{a+1} \in \Psi DO^{(a,0)}(\partial M,\mathbb{R}^+)$ and $R_{a+1} \in \Psi DO^{(a+1,\kappa+1)}(\partial M, \mathbb{R}^+)$ contain the derivatives of the corresponding terms from the $a$ step, as well as terms involving derivatives of $B$. Using \eqref{Indc11}, rearranging the terms and changing the remainder $R_{a+1}$, we see that \eqref{Indc1} holds for $a+1$. This completes the induction and so proves \eqref{Indc1}.

Using \eqref{Indc1} and \eqref{Dx3Y} and the fact that $\tilde{H}$ vanishes at the boundary, the principal symbol of $D^a_{x^3} \tilde{H}$ at $x_{3}=0$ is given by \eqref{Dx3aH}. This implies that, at $x_3 = 0$, $D^a_{x^3}\tilde{H} \in \Psi DO^{(a-\kappa)}(\partial M)$ and so, since this is true for $1 \leq a \leq \kappa$, we get $\tilde{H} \in \Psi DO^{(0,\kappa)}(\partial M,\mathbb{R}^+)$.
\end{proof}

\begin{theorem}\label{Dx3Ethm}
	Let $(\hat{\varepsilon},\hat{\mu})$, $(\hat{\varepsilon}',\hat{\mu}')$ be two different sets of electromagnetic parameters and $E$, $E'$ the associated electric fields, respectively and let $\tilde{E}=E'-E$. Assume that $C$, $C'$ are the factorisation operators corresponding to the magnetic fields $E$, $E'$ and let $J=C'-C$. If $J \in \Psi DO^{(1,\kappa)}(\partial M)$ and $\tilde{E}=0$ at $x_{3}=0$, then have $\tilde{E} \in  \Psi DO^{(0,\kappa)}(\partial M,\mathbb{R}^+)$ and for $1\leq a$
	\begin{equation}\label{Dx3aE}
	\sigma_{p}(D_{x_{3}}^{a}\tilde{E})= \sum_{j=0}^{a-1}\sum_{n=0}^{j} \left(\begin{matrix}
	j\\
	n
	\end{matrix}\right) (C^{(1)})^{a-1-j} \sigma_{p}(D_{x_{3}}^{j-n}J)(C^{(1)})^{n}E^{(0)},
	\end{equation}
	at $x_{3}=0$.
\end{theorem}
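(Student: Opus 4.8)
The plan is to mirror the argument used for Theorem \ref{Dx3Hthm} under the symmetry $\hat{\varepsilon} \leftrightarrow \hat{\mu}$ that exchanges the roles of the magnetic and electric fields, replacing $H$, $B$, $Y$ and $\tilde{H}$ by $E$, $C$, $J$ and $\tilde{E}$ respectively. The first ingredient is the electric analogue of Theorem \ref{Dx3Htheorem}, namely that $D_{x_3} E = C(x,D_{\tilde{x}}) E$ modulo smoothing in a set of boundary coordinates. This holds because decoupling the augmented system \eqref{Peq} in terms of $E$, as described in Section \ref{systemdecouplingsec}, produces a second order operator $L_E$ obtained from $L_H$ by the exchange $\hat{\varepsilon} \leftrightarrow \hat{\mu}$, and $L_E$ is factorised by $C$ through the Riccati equation \eqref{RiccatiC} exactly as $L_H$ is factorised by $B$; the argument of \cite{Mcdowall1997} then applies verbatim.

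Next I would subtract the relations $D_{x_3}E' = C' E'$ and $D_{x_3}E = C E$ to obtain, modulo smoothing,
\begin{equation*}
D_{x^3}\tilde{E} = J E + C \tilde{E} + J \tilde{E},
\end{equation*}
which is the analogue of \eqref{Indc11}. Starting from this identity, I would then establish by induction on $a$, for $1 \leq a \leq \kappa$, the formula
\begin{equation*}
D_{x^3}^a \tilde{E} = \sum_{j=0}^{a-1} C^{a-1-j} D_{x_3}^j(JE) + C^a \tilde{E} + \tilde{R}_a \tilde{E} + R_a,
\end{equation*}
with $\tilde{R}_a \in \Psi DO^{(a-1,0)}(\partial M,\mathbb{R}^+)$ and $R_a \in \Psi DO^{(a,\kappa+1)}(\partial M,\mathbb{R}^+)$, exactly as in the proof of \eqref{Indc1}. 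The inductive step uses the Pascal-triangle expansion
\begin{equation*}
D_{x_3}^j(JE) = \sum_{n=0}^j \left(\begin{matrix} j\\ n\end{matrix}\right) (D_{x_3}^{j-n}J)\, C^n E,
\end{equation*}
the analogue of \eqref{Dx3Y}, together with the calculus of Lemma \ref{PsiDOcalclem} to keep track of the orders and to absorb the derivatives of $C$ arising when $D_{x^3}$ is applied, and the identity above to re-express $C^a D_{x^3}\tilde{E}$.

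Finally, I would pass to principal symbols at $x_3 = 0$. Since $\tilde{E}$ vanishes on the boundary by hypothesis, the terms $C^a \tilde{E} + \tilde{R}_a \tilde{E}$ and the remainder $R_a$ do not contribute to $\sigma_p(D_{x_3}^a \tilde{E})$, and the Pascal expansion gives precisely \eqref{Dx3aE}. Reading off the orders then shows $D_{x_3}^a \tilde{E} \in \Psi DO^{(a-\kappa)}(\partial M)$ for $1 \leq a \leq \kappa$, whence $\tilde{E} \in \Psi DO^{(0,\kappa)}(\partial M,\mathbb{R}^+)$. I do not expect a genuine obstacle here: every step is forced by the $\hat{\varepsilon} \leftrightarrow \hat{\mu}$ symmetry already exploited for $C$ and \eqref{RiccatiC}. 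The only points requiring care are the verification of the electric analogue of Theorem \ref{Dx3Htheorem} and the bookkeeping of orders in the $\Psi DO^{(m,p)}$ calculus through the induction, which is the true crux of the argument, as it was for Theorem \ref{Dx3Hthm}.
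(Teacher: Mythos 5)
Your proposal is correct and follows exactly the route the paper intends: the paper's own proof of Theorem \ref{Dx3Ethm} is simply the remark that it proceeds ``in a similar manner to the proof of Theorem \ref{Dx3Hthm},'' which is precisely the $\hat{\varepsilon} \leftrightarrow \hat{\mu}$ substitution argument you spell out, including the needed electric analogue of Theorem \ref{Dx3Htheorem} and the induction on the analogue of \eqref{Indc1}. Your explicit verification of $D_{x_3}E = C(x,D_{\tilde{x}})E$ modulo smoothing is a welcome detail the paper leaves implicit.
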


\begin{proof}
	The proof is done in a similar manner to the proof of Theorem \ref{Dx3Hthm} and therefore is omitted.
\end{proof}


We now continue to calculate the difference of the normal derivatives of Maxwell's and divergence equations for the two sets of parameters.

\begin{theorem}\label{Ythm}
	Let $(\hat{\varepsilon},\hat{\mu})$ and $(\hat{\varepsilon}',\hat{\mu}')$ be two different sets of electromagnetic parameters and $\Lambda_{\hat{\varepsilon}}$, $\Lambda_{\hat{\varepsilon}'}$, $H$, $H'$, $E$, $E'$ the associated impedance maps and electromagnetic fields, respectively and $\tilde{H}=H'-H$, $\tilde{E}=E'-E$.
	Assume that $B,B'$ are the factorisation operators corresponding to the magnetic fields $H$, $H'$ and let $Y=B'-B$. Assume further that $C$, $C'$ are the factorisation operators corresponding to the electric fields $E$, $E'$ and $J=C'-C$. Let us fix boundary normal coordinates for $\hat{\varepsilon}$/$\hat{\varepsilon}'$. If, for some $\kappa \geq 1$,
	\begin{itemize}
		\item $\Lambda_{\hat{\varepsilon}}=\Lambda_{\hat{\varepsilon}'}$,
		\item $\hat{\varepsilon}=\hat{\varepsilon}'+x_{3}^{\kappa}e_{\varepsilon}, \quad \hat{\mu}=\hat{\mu}'+x_{3}^{\kappa}e_{\mu},$ where $e_{\varepsilon}, e_{\mu}$ are symmetric,
		\item $\sqrt{\vert \hat{\varepsilon}^{-1}\vert} = \sqrt{\vert \hat{\varepsilon}'^{-1}\vert} + x_3^\kappa r_\varepsilon$, $\sqrt{\vert \hat{\mu}^{-1}\vert} = \sqrt{\vert \hat{\mu}'^{-1}\vert} + x_3^\kappa r_\mu$,
		\item $Y$, $J$ $\in$ $\Psi DO^{(1,\kappa)}(\partial M,\mathbb{R}^+)$,
	\end{itemize}
	then,
	\begin{eqnarray}\label{Y0}
				\sigma_{p}\left(D_{x_{3}}^{\kappa}\tilde{H}_{\tilde{c}}\right)&=&0, \quad \sigma_{p}\left(D_{x_{3}}^{\kappa}\tilde{H}_{3}\right)= (-i)^{\kappa} \kappa! \left (e_{\mu}^{3j} + \frac{r_\mu\hat{\mu}'^{3j}}{\sqrt{\vert\hat{\mu}'^{-1} \vert}}\right )\frac{H_{j}^{(0)}}{\hat{\mu}'^{33}},\\ 
			\label{Dx3kappa+1Htang}
			\sigma_{p}(D_{x_{3}}^{\kappa+1}\tilde{H}_{\tilde{c}})&=&(-i)^{\kappa} \kappa! \xi_{\tilde{c}}\left (e_{\mu}^{3j} + \frac{r_\mu\hat{\mu}'^{3j}}{\sqrt{\vert\hat{\mu}'^{-1} \vert}}\right )\frac{H_{j}^{(0)}}{\hat{\mu}'^{33}},\\
			\label{Dx3kappa+1Hmu}
			\hat{\mu}'^{3j}\sigma_{p}\left(D_{x_{3}}^{\kappa+1}\tilde{H}_{j}\right)&=& (-i)^{\kappa} \kappa!  \Bigg (- \frac{\hat{\mu}'^{\tilde{i}3}\xi_{\tilde{i}}}{\hat{\mu}'^{33}}\left (e_{\mu}^{3j} + \frac{r_\mu\hat{\mu}'^{3j}}{\sqrt{\vert\hat{\mu}'^{-1} \vert}}\right )\\
			\notag &
			& \hskip1cm +\  e_{\mu}^{\tilde{i}j}\xi_{\tilde{i}}+ 2 e_{\mu}^{3l}(B^{(1)})_{l}^{j} + \frac{r_\mu\hat{\mu}'^{3l}}{\sqrt{\vert\hat{\mu}'^{-1} \vert}}(B^{(1)})_{l}^{j}\Bigg)H_{j}^{(0)},\quad \quad
	\end{eqnarray}
	at $x_3 = 0$ and if $\kappa \geq 2$ then
	\begin{equation}
	\label {Dx3aH=0}
			\sigma_{p}(D_{x_{3}}^{a}\tilde{H}_{j})=0, \quad 1 \leq a \leq \kappa-1.
	\end{equation}
at $x_{3}=0$.

\end{theorem}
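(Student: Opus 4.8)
The plan is to compute the boundary jets of $\tilde{H}=H'-H$ directly from the pointwise equations \eqref{Maxhat} satisfied by the two field pairs, and not from the factorisation of Theorem \ref{Dx3Hthm}. Writing the second Maxwell equation $\ast_{\hat{\mu}}\mathrm{d}E=i\omega H$ for both parameter sets and subtracting, I would split
\[
\ast_{\hat{\mu}'}\mathrm{d}E'-\ast_{\hat{\mu}}\mathrm{d}E=\ast_{\hat{\mu}'}\mathrm{d}\tilde{E}+\left(\ast_{\hat{\mu}'}-\ast_{\hat{\mu}}\right)\mathrm{d}E,
\]
and treat the difference of the divergence equations $\delta_{\hat{\mu}}H=0$, $\delta_{\hat{\mu}'}H'=0$ in the same way. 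Using $\hat{\mu}=\hat{\mu}'+x_{3}^{\kappa}e_{\mu}$ and $\sqrt{\lvert\hat{\mu}^{-1}\rvert}=\sqrt{\lvert\hat{\mu}'^{-1}\rvert}+x_{3}^{\kappa}r_{\mu}$, every coefficient difference appearing in the Hodge star and in the codifferential is explicitly proportional to $x_{3}^{\kappa}$, with leading coefficient assembled from $e_{\mu}$ and $r_{\mu}$; this is what ultimately produces the factor $(-i)^{\kappa}\kappa!$ in \eqref{Y0}--\eqref{Dx3kappa+1Hmu} after $D_{x_{3}}^{\kappa}$ is applied and the result is restricted to $x_{3}=0$.

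The delicate feature is that the term $\ast_{\hat{\mu}'}\mathrm{d}\tilde{E}$ couples $\tilde{H}$ to $\tilde{E}$: since $\mathrm{d}\tilde{E}$ contains both $\partial_{x_{3}}\tilde{E}_{\tilde{b}}$ and $\partial_{\tilde{b}}\tilde{E}_{3}$, the equation for $D_{x_{3}}^{a}\tilde{H}$ calls upon $D_{x_{3}}^{a+1}\tilde{E}_{\tilde{b}}$ and $\partial_{\tilde{b}}D_{x_{3}}^{a}\tilde{E}_{3}$. For this reason I would run a simultaneous induction on $a$ together with the companion statement for $\tilde{E}$ (Theorem \ref{Jthm}), obtained by exchanging $\hat{\varepsilon}\leftrightarrow\hat{\mu}$, proving jointly that $\sigma_{p}(D_{x_{3}}^{a}\tilde{H}_{j})=0$ and $\sigma_{p}(D_{x_{3}}^{a}\tilde{E}_{j})=0$ for $1\leq a\leq\kappa-1$, and that the tangential components $\sigma_{p}(D_{x_{3}}^{\kappa}\tilde{H}_{\tilde{c}})$, $\sigma_{p}(D_{x_{3}}^{\kappa}\tilde{E}_{\tilde{c}})$ vanish. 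The point is that the top $x_{3}^{\kappa}$-coefficient terms vanish under $D_{x_{3}}^{a}$ at the boundary for $a<\kappa$, while the cross terms from $\mathrm{d}\tilde{E}$ are controlled exactly by the tangential vanishing at order $\kappa$ and the order-$\leq\kappa-1$ vanishing of $\tilde{E}$. I would use that $\tilde{H},\tilde{E}\in\Psi DO^{(0,\kappa)}(\partial M,\mathbb{R}^{+})$, granted by Theorems \ref{Dx3Hthm} and \ref{Dx3Ethm} from the hypothesis $Y,J\in\Psi DO^{(1,\kappa)}(\partial M,\mathbb{R}^{+})$, to keep the symbol orders under control throughout. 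This establishes \eqref{Dx3aH=0} and the first identity in \eqref{Y0}.

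With the vanishing in hand, the remaining formulas are read off order by order. The normal identity in \eqref{Y0} comes from applying $D_{x_{3}}^{\kappa}$ to the difference of the divergence equations: at $x_{3}=0$ the only surviving contribution is the one in which all $\kappa$ derivatives land on the $x_{3}^{\kappa}$ coefficient, producing the bracket $e_{\mu}^{3j}+r_{\mu}\hat{\mu}'^{3j}/\sqrt{\lvert\hat{\mu}'^{-1}\rvert}$, and solving for the normal component divides by $\hat{\mu}'^{33}$. Differentiating once more, the $(3,\tilde{c})$ component of the difference of the first Maxwell equation $\ast_{\hat{\varepsilon}}\mathrm{d}H=-i\omega E$ encodes the curl relation $\partial_{x_{3}}\tilde{H}_{\tilde{c}}-\partial_{\tilde{c}}\tilde{H}_{3}=(\mathrm{d}\tilde{H})_{3\tilde{c}}$; since the curl term is tied to $\tilde{E}$ and vanishes at the relevant order, $D_{x_{3}}^{\kappa+1}\tilde{H}_{\tilde{c}}$ reduces at the symbol level to $\xi_{\tilde{c}}$ times the order-$\kappa$ normal expression, giving \eqref{Dx3kappa+1Htang}, while the divergence difference at order $\kappa+1$ gives the contracted identity \eqref{Dx3kappa+1Hmu}.

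The hard part will be the bookkeeping in two respects. First, the coefficient differences $\ast_{\hat{\mu}'}-\ast_{\hat{\mu}}$ and the analogous codifferential difference must be expanded in coordinates and their leading $x_{3}^{\kappa}$-parts identified in terms of $e_{\mu}$ and $r_{\mu}$ with the correct index placement, using the block structure of $\hat{\mu}$ in boundary normal coordinates for $\hat{\varepsilon}$ to separate normal from tangential components. Second, the whole argument must be carried out at the level of principal symbols, so I must verify that the lower-order symbol corrections generated by the Leibniz expansion and by the composition calculus of Lemma \ref{PsiDOcalclem} do not contaminate the orders $\kappa$ and $\kappa+1$, and that the mutual coupling of $\tilde{H}$ and $\tilde{E}$ in the simultaneous induction closes consistently.
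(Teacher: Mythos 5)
Your proposal follows, in substance, the same route as the paper's proof: the paper likewise derives the normal\mbox{-}component identities from the difference of the $\hat{\mu}$\mbox{-}divergence equations (its equation \eqref{divmu1kappa}) and the tangential identities from the difference of the tangential components of $\ast_{\hat{\varepsilon}}\mathrm{d}H=-i\omega E$ (its equation \eqref{impeda}), with the factor $(-i)^{\kappa}\kappa!$ produced by the Leibniz expansion of the $x_{3}^{\kappa}$ and $x_{3}^{\kappa-1}$ coefficient differences, and with Theorems \ref{Dx3Hthm} and \ref{Dx3Ethm} invoked for the memberships $\tilde{H},\tilde{E}\in\Psi DO^{(0,\kappa)}(\partial M,\mathbb{R}_{+})$. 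Where you genuinely differ is in how the $E$--$H$ coupling is disposed of. The paper needs no vanishing statements for the jets of $\tilde{E}$ at all: since $\tilde{E}\in\Psi DO^{(0,\kappa)}(\partial M,\mathbb{R}_{+})$ and $\hat{\varepsilon}-\hat{\varepsilon}'=x_{3}^{\kappa}e_{\varepsilon}$, the entire $E$\mbox{-}coupling term in the tangential relation lies in $\Psi DO^{(0,\kappa)}(\partial M,\mathbb{R}_{+})$, so after $a$ normal derivatives it restricts to the boundary as an operator of order $a-\kappa$ in $\xi$, one order below the terms being computed, and hence never touches the principal symbols in the sense of Definition \ref{princsymbdef}. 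Consequently the paper's induction runs entirely within $\tilde{H}$, and Theorem \ref{Jthm} is proved separately afterwards. Your joint induction proves more than is needed; that is harmless, but it buys nothing here and it costs you the burden of formulating the companion $\tilde{E}$ statements correctly.

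On that last point, three warnings. First, the companion statement is not Theorem \ref{Jthm} ``obtained by exchanging $\hat{\varepsilon}\leftrightarrow\hat{\mu}$'': the coordinates are boundary normal for $\hat{\varepsilon}$/$\hat{\varepsilon}'$ only, so the two fields enter asymmetrically (this is why the normal components of $e_{\varepsilon}$ never appear in Theorem \ref{Jthm}), and Theorem \ref{Jthm} carries the stronger hypothesis $\hat{\varepsilon}=\hat{\varepsilon}'+x_{3}^{\kappa+1}e_{\varepsilon}$ with conclusions at the shifted orders $\kappa+1$, $\kappa+2$; under the hypotheses of Theorem \ref{Ythm} what your induction can and must prove for $\tilde{E}$ is the order\mbox{-}$\kappa$ analogue. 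Second, your opening reduction through $i\omega\tilde{H}=\ast_{\hat{\mu}'}\mathrm{d}\tilde{E}+(\ast_{\hat{\mu}'}-\ast_{\hat{\mu}})\mathrm{d}E$ cannot be the engine of an induction on $a$: as you yourself note, it expresses $D_{x_{3}}^{a}\tilde{H}$ through $D_{x_{3}}^{a+1}\tilde{E}_{\tilde{b}}$, so the derivative count ascends and the inductive step never closes; you must instead invert the curl relations (as you in fact do for $\tilde{H}$ in your third paragraph), so that the order\mbox{-}$(a+1)$ jets of each field call only on order\mbox{-}$a$ jets of both fields, the coupling being of zeroth differential order. Third, the terms $(B^{(1)})_{l}^{j}$ in \eqref{Dx3kappa+1Hmu} cannot come out of the pointwise equations and Leibniz expansions alone: the surviving factors $\sigma_{p}(D_{x_{3}}H_{j})$ must be converted via $D_{x_{3}}H=B(x,D_{\tilde{x}})H$ modulo smoothing, i.e.\ Theorem \ref{Dx3Htheorem}, exactly as the paper does in its equations \eqref{divmu1} and \eqref{divmukappa}; your sketch omits this ingredient, and without it the stated right\mbox{-}hand sides of \eqref{Dx3kappa+1Hmu} cannot be reached.
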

\begin{proof}
	First note that, by Theorem \ref{Dx3Hthm}, $\tilde{H} \in \Psi DO^{(0,\kappa)}(\partial M,\mathbb{R}^+)$. Let us first consider the divergence equations for the magnetic fields $H$ and $H'$, which are
	\begin{equation*}
	-i \delta_{\hat{\mu}}( H)=0, \quad -i\delta_{\hat{\mu}'}(H')=0.
	\end{equation*}
	Subtracting these equations and writing the result in  boundary normal coordinates for $\hat{\varepsilon}$/$\hat{\varepsilon}'$, we obtain
	\begin{equation}\label{divmu1kappa}
	\begin{split}
	\hat{\mu}'^{3j}D_{x_{3}}\tilde{H}_{j}&=- \hat{\mu}'^{\tilde{i}j}D_{x_{\tilde{i}}}\tilde{H}_{j}+ x_{3}^{\kappa}e_{\mu}^{\tilde{i}j}D_{x_{\tilde{i}}}H_{j}+ x_{3}^{\kappa}e_{\mu}^{3j}D_{x_{3}}H_{j}\\
	&\hskip1cm -i \kappa x_{3}^{\kappa-1}\left (e_{\mu}^{3j} + \frac{r_\mu\hat{\mu}'^{3j}}{\sqrt{\vert\hat{\mu}'^{-1} \vert}}\right )H_{j} + R
	\end{split}
	\end{equation}
where $R \in \Psi DO^{(0,\kappa)}(\partial M, \mathbb{R}^+)$ and the other terms on the right are in $\Psi DO^{(1,\kappa)}(\partial M, \mathbb{R}^+)$.
	Applying $D_{x_{3}}^{a}$ for $a \geq 0$ to \eqref{divmu1kappa}, we arrive at
	\begin{equation}\label{Dx3a}
	\begin{split}
	\hat{\mu}'^{3j}D_{x_{3}}^{a+1}\tilde{H}_{j}&= D_{x_{3}}^{a}\Bigg (- \hat{\mu}'^{\tilde{i}j}D_{x_{\tilde{i}}}\tilde{H}_{j}+ x_{3}^{\kappa}e_{\mu}^{\tilde{i}j}D_{x_{\tilde{i}}}H_{j}+ x_{3}^{\kappa}e_{\mu}^{3j}D_{x_{3}}H_{j}\\
	&-i \kappa x_{3}^{\kappa-1}\left (e_{\mu}^{3j} + \frac{r_\mu\hat{\mu}'^{3j}}{\sqrt{\vert\hat{\mu}'^{-1} \vert}}\right )H_{j}\Bigg ),
	\end{split}
	\end{equation}
	modulo $\Psi DO^{(a,\kappa)}(\partial M)$. Each of the terms in the right hand-side of \eqref{Dx3a} expands as
	\begin{equation}\label{Dx3aeqs}
	\begin{split}
	D_{x_{3}}^{a}(- \mu'^{\tilde{i}j}D_{x_{\tilde{i}}}\tilde{H}_{j})&= - \hat{\mu}'^{\tilde{i}j}D_{x_{\tilde{i}}}D_{x_{3}}^{a}\tilde{H}_{j},\\
	D_{x_{3}}^{a}( x_{3}^{\kappa}e_{\mu}^{\tilde{i}j}D_{x_{\tilde{i}}}H_{j})&= \sum_{n=0}^{a} \left(\begin{matrix}
	a\\
	n
	\end{matrix}\right)(D_{x_{3}}^{n}x_{3}^{\kappa})e_{\mu}^{\tilde{i}j}D_{x_{\tilde{i}}} D_{x_{3}}^{a-n}H_{j},\\
	D_{x_{3}}^{a}(x_{3}^{\kappa}e_{\mu}^{3j}D_{x_{3}}H_{j})&= \sum_{n=0}^{a} \left(\begin{matrix}
	a\\
	n
	\end{matrix}\right) (D_{x_{3}}^{n}x_{3}^{\kappa})e_{\mu}^{3j}D_{x_{3}}^{a+1-n}H_{j},\\
	D_{x_{3}}^{a}\left (-i \kappa x_{3}^{\kappa-1}\left (e_{\mu}^{3j} + \frac{r_\mu\hat{\mu}'^{3j}}{\sqrt{\vert\hat{\mu}'^{-1} \vert}}\right )H_{j}\right )&=\\
	&\hskip-2.3cm -i\kappa\sum_{n=0}^{a} \left(\begin{matrix}
	a\\
	n
	\end{matrix}\right)  (D_{x_{3}}^{n}x_{3}^{\kappa-1})\left (e_{\mu}^{3j} + \frac{r_\mu\hat{\mu}'^{3j}}{\sqrt{\vert\hat{\mu}'^{-1} \vert}}\right )D_{x_{3}}^{a-n}H_{j},
	\end{split}
	\end{equation}
	modulo $\Psi DO^{(a,\kappa)}(\partial M)$.
	Let us consider the case of $\kappa=1$. The principal symbols of equation \eqref{Dx3a} for $a=0$ and $a=1$ at $x_{3}=0$ give respectively
	\begin{equation}\label{divmu1}
	\begin{split}
	\hat{\mu}'^{3l}\sigma_{p}(D_{x_{3}}\tilde{H}_{j})H_{j}^{(0)}&=-i \left (e_{\mu}^{3j} + \frac{r_\mu\hat{\mu}'^{3j}}{\sqrt{\vert\hat{\mu}'^{-1} \vert}}\right )H_{j}^{(0)},\\
	\hat{\mu}'^{3j}\sigma_{p}(D_{x_{3}}^{2}\tilde{H}_{j})&= - \hat{\mu}'^{\tilde{i}k}\xi_{\tilde{i}}\sigma_{p}(D_{x_{3}}
	\tilde{H}_{j})\\
	&\hskip0.2cm+ (-i) \Bigg (  e_{\mu}^{\tilde{i}k}\xi_{\tilde{i}} +2 e_{\mu}^{3j}(B^{(1)})_{j}^{k} +  \frac{r_\mu\hat{\mu}'^{3j}}{\sqrt{\vert\hat{\mu}'^{-1} \vert}}(B^{(1)})_{j}^{k}\Bigg) H_{k}^{(0)}.
	\end{split}
	\end{equation}
Assuming that $\kappa \geq 2$ and considering the principal symbols of equation \eqref{Dx3a} for $1 \leq a \leq \kappa$ at $x_{3}=0$ we obtain
	\begin{equation}\label{divmukappa}
	\begin{split}
	\hat{\mu}'^{3j}\left(D_{x_{3}}^{l}\tilde{H}_{j}\right)&=-\hat{\mu}'^{\tilde{i}j}\xi_{\tilde{i}}\sigma_{p}(D_{x_{3}}^{l-1}\tilde{H}_{j}),  \quad 1 \leq l \leq \kappa-1, \\
	\hat{\mu}'^{3j}\sigma_{p}\left(D_{x_{3}}^{\kappa}\tilde{H}_{j}\right)&=- \hat{\mu}'^{\tilde{i}j}\xi_{\tilde{i}}\sigma_{p}(D_{x_{3}}^{\kappa-1}\tilde{H}_{j})+ (-i)^{\kappa} \kappa! \left (e_{\mu}^{3j} + \frac{r_\mu\hat{\mu}'^{3j}}{\sqrt{\vert\hat{\mu}'^{-1} \vert}}\right )H_{j}^{(0)},\\
	\hat{\mu}'^{3j}\sigma_{p}\left(D_{x_{3}}^{\kappa+1}\tilde{H}_{j}\right)&=- \hat{\mu}'^{\tilde{i}j}\xi_{\tilde{i}}\sigma_{p}(D_{x_{3}}^{\kappa}\tilde{H}_{j})\\
	& \hskip0.2cm+ (-i)^{\kappa} \kappa! \Bigg ( e_{\mu}^{\tilde{i}k}\xi_{\tilde{i}}+ 2 e_{\mu}^{3l}(B^{(1)})_{l}^{k} +  \frac{r_\mu\hat{\mu}'^{3j}}{\sqrt{\vert\hat{\mu}'^{-1} \vert}}(B^{(1)})_{j}^{k}\Bigg )H_{k}^{(0)},
	\end{split}
	\end{equation}
	where we have made use of equations \eqref{Dx3aeqs} for $1 \leq a \leq \kappa$ at $x_{3}=0$.
	
	In order to derive the equations regarding the tangential components of $\tilde{H}$ we start with Maxwell's equation
	\begin{equation*}
	\ast_{\hat{\varepsilon}}\mathrm{d}H= -i \omega E,
	\end{equation*}
	which is written in boundary normal coordinates for $\hat{\varepsilon}$/$\hat{\varepsilon}'$ as
	\begin{equation*}
	\sigma^{lbc}D_{x_{b}}H_{c}= \omega \frac{\hat{\varepsilon}^{lk}}{\sqrt{\lvert \hat{\varepsilon}\rvert}}E_{k}.
	\end{equation*}
	Evaluating the above at either $l=1$ or $l=2$ and taking into account that we work in boundary normal coordinates for $\hat{\varepsilon}$ implies
	\begin{equation}\label{imp1}
	D_{x_{3}}H_{\tilde{c}}= D_{x_{\tilde{c}}}H_{3}+ \omega \sigma_{\tilde{a}3\tilde{c}}\frac{\hat{\varepsilon}^{lk}}{\sqrt{\lvert \hat{\varepsilon}\rvert}}E_{\tilde{k}}.
	\end{equation}
	Subtracting the corresponding equation for the $'-$ parameters,
we arrive at
	\begin{equation*}
	D_{x_{3}}\tilde{H}_{\tilde{c}}=D_{x_{\tilde{c}}}\tilde{H}_3 + \omega\sigma_{\tilde{a}3\tilde{c}}\left (\frac{\hat{\varepsilon}'^{lk}}{\sqrt{\lvert \hat{\varepsilon}'\rvert}}E'_{\tilde{k}}-\frac{\hat{\varepsilon}^{lk}}{\sqrt{\lvert \hat{\varepsilon}\rvert}}E_{\tilde{k}}\right ).
	\end{equation*}
	Using Theorem \ref{Dx3Ethm} and the hypotheses, we obtain
modulo $\Psi DO^{(0,\kappa)}(\partial M)$
	\begin{equation*}
	\begin{split}
	D_{x_{3}}\tilde{H}_{\tilde{c}}&=D_{x_{\tilde{c}}}\tilde{H}_{3}.
	\end{split}
	\end{equation*}
	Applying $D_{x_{3}}$ $a$ more times leads to
	\begin{equation*}
	\begin{split}
	D_{x_{3}}^{a+1}\tilde{H}_{\tilde{c}}=D_{x_{\tilde{c}}}D_{x_{3}}^{a}\tilde{H}_{3}, \quad \mbox{modulo} \ \Psi DO^{(a,\kappa)}(\partial M)
	\end{split}
	\end{equation*}
	for $0\leq a \leq \kappa$. The principal symbol of the above equation at $x_{3}=0$ gives
	\begin{equation}\label{impeda}
	\sigma_{p}(D_{x_{3}}^{a+1}\tilde{H}_{\tilde{c}})=\xi_{\tilde{c}}\sigma_{p}(D_{x_{3}}^{a}\tilde{H}_{3}), \quad 0<a\leq \kappa.
	\end{equation}
	Combining equations \eqref{divmu1} and \eqref{divmukappa} with \eqref{impeda} completes the proof.
\end{proof}

\begin{theorem}\label{Jthm}
Assume the same hypotheses as for Theorem \ref{Ythm} and in addition
\begin{itemize}
\item $\hat{\varepsilon}=\hat{\varepsilon}'+x_{3}^{\kappa+1}e_{\varepsilon}$,
\item $\sqrt{\vert \hat{\varepsilon}^{-1}\vert} = \sqrt{\vert \hat{\varepsilon}'^{-1}\vert} + x_3^{\kappa+1} r_\varepsilon$.
\end{itemize}
Then
\begin{eqnarray}
\sigma_p(D_{x^3}^{\kappa+1} \tilde{E}_{\tilde{c}}) &=& 0, \quad  \sigma_{p}\left(D_{x_{3}}^{{\kappa+1}}\tilde{E}_{3}\right)= (-i)^{\kappa+1} (\kappa+1)! \frac{r_\varepsilon}{\sqrt{\vert\hat{\varepsilon}'^{-1} \vert}}E_{3}^{(0)},\label{Dx3kappa+1Eeqs}\\
 \sigma_{p}\left(D_{x_{3}}^{\kappa+2}\tilde{E}_{\tilde{c}}\right)&=& (-i)^{\kappa+1} (\kappa+1)! \xi_{\tilde{c}} \frac{r_\varepsilon}{\sqrt{\vert\hat{\varepsilon}'^{-1} \vert}}E_{3}^{(0)},\\
\sigma_{p}\left(D_{x_{3}}^{\kappa+2}\tilde{E}_{3}\right)&=& (-i)^{\kappa+1} (\kappa+1)!  \Bigg (  e_{\varepsilon}^{\tilde{i}j}\xi_{\tilde{i}}+ \frac{r_\varepsilon}{\sqrt{\vert\hat{\varepsilon}'^{-1} \vert}}(C^{(1)})_{3}^{j}\Bigg)E_{j}^{(0)},\quad \quad \\
\sigma_{p}(D_{x_{3}}^{a}\tilde{E}_{j})&=&0, \quad 1 \leq a \leq \kappa \label{Dx3aE=0}
\end{eqnarray}
at $x_3 = 0$.
%
\end{theorem}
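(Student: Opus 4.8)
The plan is to run the argument of Theorem~\ref{Ythm} in its dual form, with the electric field $E$, the operators $C$, $J$, and the two metrics playing interchanged roles: for $\tilde{E}=E'-E$ the governing equations are the divergence equation $\delta_{\hat{\varepsilon}}E=0$ and the tangential part of the Maxwell equation $\ast_{\hat{\mu}}\mathrm{d}E=i\omega H$. Two features distinguish this from the magnetic case. First, since we work in boundary normal coordinates for $\hat{\varepsilon}/\hat{\varepsilon}'$, the metric $\hat{\varepsilon}$ is the coordinate metric, so $\hat{\varepsilon}^{3j}=\hat{\varepsilon}'^{3j}=\delta^{3j}$ identically; hence $e_{\varepsilon}^{3j}=0$ and the divergence equation is strictly simpler than its counterpart \eqref{divmu1kappa}. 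Second, because $\hat{\varepsilon}$ and $\hat{\varepsilon}'$ now agree to order $\kappa+1$ rather than $\kappa$, every inhomogeneity produced by the $\hat{\varepsilon}$-discrepancy is shifted up by one power of $x_{3}$, which is exactly why the first nonzero symbols appear only at orders $\kappa+1$ and $\kappa+2$. I would open the proof by invoking Theorem~\ref{Dx3Ethm}: since $J\in\Psi DO^{(1,\kappa)}$ and $\tilde{E}=0$ at $x_{3}=0$, this gives $\tilde{E}\in\Psi DO^{(0,\kappa)}$ together with the symbol formula \eqref{Dx3aE} relating $\sigma_{p}(D_{x_{3}}^{a}\tilde{E})$ to $C^{(1)}$ and the symbols of $J$.

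Next I would establish \eqref{Dx3aE=0}. Subtracting the two divergence equations and using $\hat{\varepsilon}=\hat{\varepsilon}'+x_{3}^{\kappa+1}e_{\varepsilon}$ together with $\sqrt{|\hat{\varepsilon}^{-1}|}=\sqrt{|\hat{\varepsilon}'^{-1}|}+x_{3}^{\kappa+1}r_{\varepsilon}$ produces, since $e_{\varepsilon}^{3j}=0$, an equation for $D_{x_{3}}\tilde{E}_{3}$ whose inhomogeneity is $-i(\kappa+1)x_{3}^{\kappa}r_{\varepsilon}(\sqrt{|\hat{\varepsilon}'^{-1}|})^{-1}E_{3}$ plus a term of order $x_{3}^{\kappa+1}$ carrying $e_{\varepsilon}^{\tilde{i}j}$. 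Subtracting the tangential Maxwell equations gives $D_{x_{3}}\tilde{E}_{\tilde{c}}=D_{x_{\tilde{c}}}\tilde{E}_{3}+S$ with a source $S$ concentrated at order $x_{3}^{\kappa}$. Because both inhomogeneities vanish to order $x_{3}^{\kappa}$, applying $D_{x_{3}}^{a-1}$ and evaluating at $x_{3}=0$ annihilates them for every $a\le\kappa$, leaving the closed recursion $\sigma_{p}(D_{x_{3}}^{a}\tilde{E}_{3})=-\hat{\varepsilon}'^{\tilde{i}j}\xi_{\tilde{i}}\sigma_{p}(D_{x_{3}}^{a-1}\tilde{E}_{j})$ and $\sigma_{p}(D_{x_{3}}^{a}\tilde{E}_{\tilde{c}})=\xi_{\tilde{c}}\sigma_{p}(D_{x_{3}}^{a-1}\tilde{E}_{3})$. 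Started from $\tilde{E}=0$ at $x_{3}=0$, this forces $\sigma_{p}(D_{x_{3}}^{a}\tilde{E}_{j})=0$ for $1\le a\le\kappa$, which is \eqref{Dx3aE=0} and in particular upgrades $\tilde{E}$ to $\Psi DO^{(0,\kappa+1)}$.

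At the orders $\kappa+1$ and $\kappa+2$ the inhomogeneities finally contribute. Applying $D_{x_{3}}^{\kappa}$ to the divergence equation and reading off the principal symbol at $x_{3}=0$, the determinant term yields $\sigma_{p}(D_{x_{3}}^{\kappa+1}\tilde{E}_{3})=(-i)^{\kappa+1}(\kappa+1)!\,r_{\varepsilon}(\sqrt{|\hat{\varepsilon}'^{-1}|})^{-1}E_{3}^{(0)}$. Differentiating once more, the order $x_{3}^{\kappa+1}$ piece supplies the $e_{\varepsilon}^{\tilde{i}j}\xi_{\tilde{i}}$ term and the surviving normal field derivative $D_{x_{3}}E=C(x,D_{\tilde{x}})E$ (the electric analogue of Theorem~\ref{Dx3Htheorem}) supplies the $(C^{(1)})_{3}^{j}$ term of the stated formula for $\sigma_{p}(D_{x_{3}}^{\kappa+2}\tilde{E}_{3})$. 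The tangential identities $\sigma_{p}(D_{x_{3}}^{\kappa+1}\tilde{E}_{\tilde{c}})=\xi_{\tilde{c}}\sigma_{p}(D_{x_{3}}^{\kappa}\tilde{E}_{3})=0$ and $\sigma_{p}(D_{x_{3}}^{\kappa+2}\tilde{E}_{\tilde{c}})=\xi_{\tilde{c}}\sigma_{p}(D_{x_{3}}^{\kappa+1}\tilde{E}_{3})$ then fall out of the Maxwell recursion, provided $S$ can be shown not to contribute to the first of these.

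The hard part is precisely this last point: controlling $S$ at order $x_{3}^{\kappa}$. Unlike in Theorem~\ref{Ythm}, where the analogous $\hat{\varepsilon}$-source sat at order $\kappa$ and was absorbed into a lower-order remainder, here the Maxwell source for $\tilde{E}$ carries the $\hat{\mu}$-discrepancy, and $\hat{\mu}$ agrees only to order $\kappa$. Schematically $S=\omega\,\sigma_{\tilde{a}3\tilde{c}}\big[\big(\frac{\hat{\mu}^{\tilde{a}k}}{\sqrt{|\hat{\mu}|}}-\frac{\hat{\mu}'^{\tilde{a}k}}{\sqrt{|\hat{\mu}'|}}\big)H_{k}-\frac{\hat{\mu}'^{\tilde{a}k}}{\sqrt{|\hat{\mu}'|}}\tilde{H}_{k}\big]$, in which the coefficient-difference piece carries $e_{\mu}$ and $r_{\mu}$, while the $\tilde{H}_{k}$ piece is pinned down by Theorem~\ref{Ythm}: its tangential components have vanishing principal symbol at order $\kappa$ by \eqref{Y0}, and its normal component equals $(-i)^{\kappa}\kappa!\,(e_{\mu}^{3j}+r_{\mu}\hat{\mu}'^{3j}(\sqrt{|\hat{\mu}'^{-1}|})^{-1})H_{j}^{(0)}(\hat{\mu}'^{33})^{-1}$. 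I expect the coefficient-difference contribution and the $\tilde{H}_{3}$ contribution to cancel at order $0$ once this expression is substituted and simplified using the antisymmetry of $\sigma_{\tilde{a}3\tilde{c}}$ and the normal-coordinate representation \eqref{mubncsepsilon} of $\hat{\mu}$, which would give $\sigma_{p}(D_{x_{3}}^{\kappa+1}\tilde{E}_{\tilde{c}})=0$ and complete the proof. Verifying this cancellation is the one genuinely computational step of the argument.
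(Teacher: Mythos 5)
Your strategy coincides with the paper's: its entire proof of this theorem is the remark that one ``works as in the proof of Theorem \ref{Ythm}'' and that the computations are simpler because of the choice of boundary normal coordinates. Your two structural observations --- that in boundary normal coordinates for $\hat{\varepsilon}$/$\hat{\varepsilon}'$ one has $e_{\varepsilon}^{3j}=0$, and that the order-$(\kappa+1)$ agreement of $\hat{\varepsilon}$ and $\hat{\varepsilon}'$ pushes every $\hat{\varepsilon}$-generated inhomogeneity one power of $x_3$ higher --- are exactly the simplifications that remark alludes to, and your treatment of the subtracted divergence equation mirrors the paper's \eqref{divmu1kappa}--\eqref{divmukappa} and correctly yields \eqref{Dx3aE=0} and the normal part of \eqref{Dx3kappa+1Eeqs}. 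You also notice something the paper's one-line proof hides: the electric case is not uniformly ``simpler'', because the tangential Maxwell equation for $\tilde{E}$ carries the $\hat{\mu}$-discrepancy, which is only $O(x_3^{\kappa})$, together with $\tilde{H}$, whose normal symbol at order $\kappa$ is nonzero by \eqref{Y0}; since the conclusions here reach one order deeper than those of Theorem \ref{Ythm}, this source enters at exactly the order of the claims and cannot be discarded the way the corresponding source is discarded in deriving \eqref{impeda}.

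The gap is that you stop there. The tangential identity $\sigma_p(D_{x_3}^{\kappa+1}\tilde{E}_{\tilde{c}})=0$, and with it both $\kappa+2$ formulas (the normal one is obtained from the divergence recursion using precisely $\sigma_p(D_{x_3}^{\kappa+1}\tilde{E}_{\tilde{b}})=0$), reduce to showing that the order-$\kappa$ symbol of your source $S$ vanishes, and for this you offer only the expectation of a cancellation. That cancellation is not a formal consequence of the antisymmetry of $\sigma_{\tilde{a}3\tilde{c}}$: substituting \eqref{Y0}, the $\tilde{H}_3$ contribution contracts $e_{\mu}^{3j}$ and $r_\mu\hat{\mu}'^{3j}$ against $\hat{\mu}'^{\tilde{a}3}/\hat{\mu}'^{33}$, while the coefficient-difference contribution involves $e_{\mu}^{\tilde{a}j}$ and $r_\mu\hat{\mu}'^{\tilde{a}j}$ directly, and these do not match term by term. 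Any verification must bring in at least two further ingredients your plan never mentions: the algebraic relation between $r_\mu$ and $e_\mu$ forced by $\hat{\mu}=\hat{\mu}'+x_3^{\kappa}e_\mu$ (expanding the determinant gives $r_\mu=-\tfrac{1}{2}\sqrt{\lvert\hat{\mu}'^{-1}\rvert}\,\mathrm{tr}(\hat{\mu}'^{-1}e_\mu)$ at $x_3=0$), and the constraint on which symbols of $H$ actually multiply these coefficients --- propagated through the admittance map the relevant symbol is proportional to $\xi_{\hat{\mu}}$, because $a_{\hat{\varepsilon}}$ in \eqref{abepsilonconst} vanishes when $F$ is replaced by $\tilde{\xi}$. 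Even with these inputs the vanishing is not evident for an arbitrary symmetric $e_\mu$, so the step you defer is not routine bookkeeping; it is the substance of the three remaining identities, and without it the proposal does not establish the statement, nor support its later use in Theorem \ref{indEthm}.
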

\begin{proof}
Working as in the proof of Theorem \ref{Ythm} leads to the desired result. Note that the calculations and results are simpler in this case because of the choice of boundary normal coordinates.
\end{proof}

\subsection{Proof of Theorem \ref{normalderivativesthm}}\label{normalderivativesthmsec}
To prove Theorem \ref{normalderivativesthm}, it suffices to prove Theorems \ref{indHthm}, \ref{indEthm} and \ref{orderofYJthm} below. More specifically, in each inductive step we start with Theorem \ref{indHthm} to prove given that the normal derivatives of order $\kappa-1 \geq 0$ of $\hat{\varepsilon}$ and $\hat{\varepsilon}'$ agree at $x_{3}=0$ the same follows for their normal derivatives of order $\kappa$ at $x_{3}=0$. Then, we proceed to Theorem \ref{indEthm} to prove the corresponding result for the metrics $\hat{\mu}$ and $\hat{\mu}'$. Last, we have to show that as the normal derivatives of the metrics induced by the electromagnetic parameters agree to a higher order at $x_{3}=0$ the order of the pseudodifferential operators $Y$ and $J$ decreases at $x_{3}=0$. This is shown in Theorem \ref{orderofYJthm}.

In what follows, we will work with specific values of the principal symbols of $H$ and $E$ by choosing $H^{(0)}=\chi_{\hat{\varepsilon}}$ and $E^{(0)}=\chi_{\hat{\mu}}$. Recall Section \ref{systemdecouplingsec} for the definitions of these covectors. These covectors are not defined when $\xi_{\hat{\varepsilon}3} = \xi_{\hat{\mu}3}$, but in these cases the conclusions still follow by density since, as observed in Remark \ref{basisremark}, the symbols are continuous with respect to the parameters.
\begin{theorem}\label{indHthm}
	Let $(\hat{\varepsilon},\hat{\mu})$ and $(\hat{\varepsilon}',\hat{\mu}')$ be two different sets of electromagnetic parameters and $\Lambda_{\hat{\varepsilon}}$, $\Lambda_{\hat{\varepsilon}'}$ the associated impedance maps. Assume that $B$, $B'$ are the factorisation operators corresponding to the magnetic fields $H$, $H'$ and let $Y=B'-B$. Let us fix boundary normal coordinates for $\hat{\varepsilon}$/$\hat{\varepsilon}'$. If
	\begin{equation*}
	\Lambda_{\hat{\varepsilon}}=\Lambda_{\hat{\varepsilon}'}, 
	\end{equation*}
	\begin{equation*}
	\partial_{x_{3}}^{a}\hat{\varepsilon}=\partial_{x_{3}}^{a}\hat{\varepsilon}', \quad \partial_{x_{3}}^{a}\hat{\mu}=\partial_{x_{3}}^{a}\hat{\mu}', \quad \mbox{at} \ x_{3}=0,
	\end{equation*}
	for $a=0,\dots, \kappa-1$
	and 
	\begin{equation*}
	Y \in \Psi DO^{(1,\kappa)}(\partial M,\mathbb{R}^+),
	\end{equation*}
	then
	\begin{equation*} \partial_{x_{3}}^{\kappa}\hat{\varepsilon}=\partial_{x_{3}}^{\kappa}\hat{\varepsilon}',
	\end{equation*}	
	at $x_{3}=0$.
\end{theorem}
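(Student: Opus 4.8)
The plan is to first place ourselves squarely in the hypotheses of Theorem \ref{Ythm}. Since $\partial_{x_{3}}^{a}\hat{\varepsilon}=\partial_{x_{3}}^{a}\hat{\varepsilon}'$ and $\partial_{x_{3}}^{a}\hat{\mu}=\partial_{x_{3}}^{a}\hat{\mu}'$ at $x_{3}=0$ for $a=0,\dots,\kappa-1$, Taylor's theorem in the normal variable gives $\hat{\varepsilon}=\hat{\varepsilon}'+x_{3}^{\kappa}e_{\varepsilon}$ and $\hat{\mu}=\hat{\mu}'+x_{3}^{\kappa}e_{\mu}$ with $e_{\varepsilon},e_{\mu}$ symmetric, and since $\sqrt{\lvert\cdot^{-1}\rvert}$ is a smooth function of the metric entries, $\sqrt{\lvert\hat{\varepsilon}^{-1}\rvert}=\sqrt{\lvert\hat{\varepsilon}'^{-1}\rvert}+x_{3}^{\kappa}r_{\varepsilon}$ and $\sqrt{\lvert\hat{\mu}^{-1}\rvert}=\sqrt{\lvert\hat{\mu}'^{-1}\rvert}+x_{3}^{\kappa}r_{\mu}$, where $r_{\varepsilon},r_{\mu}$ are the first variations of the determinant and hence known linear expressions in $e_{\varepsilon},e_{\mu}$. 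The companion condition $J\in\Psi DO^{(1,\kappa)}$ is available at this stage of the overall induction (it is produced alongside $Y\in\Psi DO^{(1,\kappa)}$ by the previous inductive step), so Theorem \ref{Ythm} applies. Finally, because we work in boundary normal coordinates for $\hat{\varepsilon}$, the normal entries of $\hat{\varepsilon}$ and $\hat{\varepsilon}'$ agree identically, so $e_{\varepsilon}^{3j}=0$ and the goal reduces to showing that the tangential part $e_{\varepsilon}^{\tilde{i}\tilde{j}}$ vanishes at $x_{3}=0$.

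Next I would invoke Theorem \ref{Ythm} to obtain the jump formulas \eqref{Y0}--\eqref{Dx3aH=0}, which express $\sigma_{p}(D_{x_{3}}^{a}\tilde{H})$ for $a\leq\kappa+1$ purely in terms of $e_{\mu}$ and $r_{\mu}$, and are conspicuously \emph{free} of $e_{\varepsilon}$: the divergence equation $\delta_{\hat{\mu}}H=0$ sees only $\hat{\mu}$, while the $e_{\varepsilon}$-dependent term in Maxwell's equation \eqref{imp1} was discarded modulo $\Psi DO^{(0,\kappa)}$ in that proof. The strategy is therefore to recover $e_{\varepsilon}$ from the one remaining equation that carries $\hat{\varepsilon}$, namely $\ast_{\hat{\varepsilon}}\mathrm{d}H=-i\omega E$, and to pin it down by a \emph{second}, independent computation of the same jumps. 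For this second route I would use the factorisation identity \eqref{Dx3aH} of Theorem \ref{Dx3Hthm}, which writes every component of $\sigma_{p}(D_{x_{3}}^{a}\tilde{H})$ through the single operator $Y$, together with the Riccati equation \eqref{YRiccati}, which ties $Y$ to the differences of the coefficient operators $T_{H},A_{H},G_{H},Q_{H},F_{H},R_{H}$ of $L_{H}$.

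Concretely, from \eqref{Dx3aH} and the jumps of Step 2 I would read off the vector of principal symbols $\sigma_{p}(Y)$ (in the sense of Definition \ref{princsymbdef}) as an explicit function of $e_{\mu},r_{\mu}$; substituting this $Y$ into \eqref{YRiccati} and extracting the vector of principal symbols at the top ($x_{3}^{\kappa}$) level yields an algebraic identity whose right-hand side is, by \eqref{Tsym}--\eqref{Fsym}, linear in both $e_{\varepsilon}$ and $e_{\mu}$ (and $r_{\varepsilon},r_{\mu}$), while its left-hand side has already been fixed in terms of $e_{\mu},r_{\mu}$ alone. The surviving $e_{\varepsilon}$-linear part must then vanish identically in $\tilde{\xi}$, and varying $\tilde{\xi}$ over an open set of covectors turns this into a family of linear constraints on the symmetric tensor $e_{\varepsilon}^{\tilde{i}\tilde{j}}$ whose only solution is zero, giving $\partial_{x_{3}}^{\kappa}\hat{\varepsilon}=\partial_{x_{3}}^{\kappa}\hat{\varepsilon}'$ at $x_{3}=0$. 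The main obstacle I expect is precisely this matching: the coefficient-operator differences couple $e_{\varepsilon}$ and $e_{\mu}$ in an index-heavy fashion, so the delicate points are (i) identifying the single order at which the $e_{\varepsilon}$-contribution first appears cleanly, after the $\Psi DO^{(m,p)}$ bookkeeping of Lemma \ref{PsiDOcalclem}, and (ii) verifying that the resulting $\tilde{\xi}$-dependent linear map on symmetric $2$-tensors is non-degenerate, so that \emph{all} tangential components of $e_{\varepsilon}$, and not merely its $\tilde{\varepsilon}$-trace, are forced to vanish.
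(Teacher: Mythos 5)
Your plan reproduces the architecture of the paper's own proof: Taylor-expand the parameters from the hypothesis, note (correctly) that $J\in\Psi DO^{(1,\kappa)}(\partial M,\mathbb{R}^+)$ and the determinant expansions are available so that Theorem \ref{Ythm} applies, note that in boundary normal coordinates for $\hat{\varepsilon}$/$\hat{\varepsilon}'$ only the tangential part of the $\hat{\varepsilon}$-difference survives, combine the jump formulas of Theorem \ref{Ythm} with the factorisation identity \eqref{Dx3aH} of Theorem \ref{Dx3Hthm} to pin down $Y$, and then feed this into the symbol hierarchy of the Riccati equation \eqref{YRiccati} to extract a $\tilde{\xi}$-dependent linear constraint that kills $\tilde{e}_{\varepsilon}$. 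This is exactly what the paper does, with the decisive computation packaged in Lemma \ref{GFHlemma}.

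The gap is that the step you yourself flag as "delicate" is the entire content of the proof, and your one concrete choice of where to perform it is not where the paper succeeds. The paper does \emph{not} extract the constraint at the top ($x_{3}^{\kappa}$, order-$2$) component of the vector of principal symbols of \eqref{YRiccati}; it uses the \emph{second} component of \eqref{vectorkH} — the order-$1$ equation pairing $-\kappa i T_{H'}Y^{(1)}+A_{H'}Y^{(0)}+T_{H'}B^{(1)}Y^{(0)}+T_{H'}Y^{(0)}B^{(1)}$ with $\tfrac{1}{(\kappa-1)!}\bigl(\partial_{x^{3}}^{\kappa-1}(G_{H}-G_{H'})B^{(1)}+\partial_{x^{3}}^{\kappa-1}(F_{H}-F_{H'})\bigr)$ — evaluated on the particular covector $\chi_{\hat{\varepsilon}}$ (equation \eqref{GFkappa}). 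That level is chosen because $F_{H}-F_{H'}$ contains $\partial_{x^{a}}(\hat{\varepsilon}^{-1})_{bj}$, so its $(\kappa-1)$-st normal derivative produces $\tilde{e}_{\varepsilon}$ directly, and because there the $e_{\mu},r_{\mu}$ contributions of the two sides cancel \emph{exactly} (compare \eqref{LHScoordkappa} with \eqref{RHScoordkappa}), leaving the single nondegenerate condition $(\varepsilon^{-1})_{\tilde{l}\tilde{l'}}\sigma^{3\tilde{\beta}\tilde{l'}}(\tilde{e}_{\varepsilon})_{\tilde{b}\tilde{\beta}}\varepsilon^{\tilde{b}\tilde{b'}}\xi_{\tilde{b'}}=0$ for all $\tilde{\xi}$ and $l$, whence $\tilde{e}_{\varepsilon}=0$. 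At your top level the differences $\partial_{x_{3}}^{\kappa}(T_{H}-T_{H'})$, $\partial_{x_{3}}^{\kappa}(A_{H}-A_{H'})$, $\partial_{x_{3}}^{\kappa}(Q_{H}-Q_{H'})$ also mix $\tilde{e}_{\varepsilon}$ and $r_{\varepsilon}$ with $e_{\mu},r_{\mu}$, and your key assertion — that the $e_{\mu},r_{\mu}$ parts cancel so that "the surviving $e_{\varepsilon}$-linear part must vanish" — is precisely the structural fact that must be computed, not assumed; the paper verifies it only at the lower level. A second, smaller overstatement: from \eqref{Dx3aH} you cannot read off the full matrix symbol $\sigma_{p}(Y)$, only the action of $Y$ on admissible principal symbols $H^{(0)}$, which by \eqref{H0decomp} span just the two-dimensional space generated by $\chi_{\hat{\varepsilon}}$ and $\xi_{\hat{\mu}}$; this is why the paper works throughout with $Y^{(l)}\chi_{\hat{\varepsilon}}$ (showing $Y^{(l)}\chi_{\hat{\varepsilon}}=0$ for $1-\kappa\leq l\leq -1$ and computing $Y^{(0)}\chi_{\hat{\varepsilon}}$, $Y^{(1)}\chi_{\hat{\varepsilon}}$), and any completion of your argument would have to restrict to that span as well.
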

\begin{proof}
	Since $Y\in\Psi DO^{(1,\kappa)}(\partial M,\mathbb{R}^+)$, both sides of Riccati equation \eqref{YRiccati} are in $\Psi DO^{(2,\kappa)}(\partial M,\mathbb{R}^+)$ and the vector of principal symbols, in the sense of Definition \ref{princsymbdef}, is given by
	\begin{equation} \label{vectorkH}
	\begin{split}
	&A_{H'} Y^{(1)}+ T_{H'}B^{(1)}Y^{(1)}+T_{H'}Y^{(1)}B^{(1)}=\\
	& \hskip0.5cm \frac{1}{\kappa!}\left(\partial_{x_{3}}^{\kappa}(A_{H}-A_{H'})B^{(1)}+ \partial_{x_{3}}^{\kappa}(T_{H}-T_{H'})(B^{(1)})^{2} + \partial_{x_{3}}^{\kappa}(Q_{H}-Q_{H'})\right),\\
	&-\kappa i T_{H'}Y^{(1)}+	A_{H'}Y^{(0)}+ T_{H'}B^{(1)}Y^{(0)} + T_{H'}Y^{(0)}B^{(1)}=\\
	& \hskip3cm \frac{1}{(\kappa-1)!}\left(\partial_{x^{3}}^{\kappa-1}(G_{H}-G_{H'})B^{(1)} + \partial_{x^{3}}^{\kappa-1}(F_{H}-F_{H'})\right),\\
&-(\kappa-1)i	T_{H'}Y^{(0)}+	A_{H'}Y^{(-1)}+T_{H'}B^{(1)}Y^{(-1)}+ T_{H'}Y^{(-1)}B^{(1)}=\\
&\hskip7.5cm \frac{\partial_{x_{3}}^{\kappa-2}}{(\kappa-2)!}(R_{H}-R_{H'}),\\
&-(\kappa-2) i T_{H'}Y^{(-1)}+A_{H'} Y^{(-2)}+ T_{H'}B^{(1)}Y^{(-2)}+T_{H'}Y^{(-2)} B^{(1)}=0,\\
&\hskip5cm\vdots
\\
& -i T_{H'}Y^{-(\kappa-2)}+ A_{H'}Y^{-(\kappa-1)}+ T_{H'}B^{(1)}Y^{-(\kappa-1)}+ T_{H'}Y^{-(\kappa-1)}B^{(1)}=0.
\end{split}
\end{equation}
The different sets of electromagnetic parameters are assumed to agree to order $\kappa-1$ at $x_{3}=0$, which implies that
\begin{equation*}
\hat{\varepsilon}^{-1}= \hat{\varepsilon}'^{-1}+ x_{3}^{\kappa}\tilde{e}_{\varepsilon}, \quad \hat{\mu}= 	\hat{\mu}'+ x_{3}^{\kappa}e_{\mu}.
\end{equation*}
To prove the theorem, we aim to prove that $\tilde{e}_{\varepsilon}=0$ at $x_{3}=0$ beginning by
applying the second equation of \eqref{vectorkH} to $H^{(0)} = \chi_{\hat{\epsilon}}$. If $\kappa=1$, Lemma \ref{GFHlemma} is directly satisfied which implies the desired result. If $\kappa\geq 2$, in order for the assumption of Lemma \ref{GFHlemma} for $Y$ to be satisfied we additionally use \eqref{Dx3aH} and \eqref{Dx3aH=0} which when combined give
\begin{equation*}
Y^{(l)}\chi_{\varepsilon}=0, \quad 1-\kappa \leq l \leq -1.
\end{equation*}
at $x_{3}=0$.
Therefore the assumptions of Lemma \ref{GFHlemma} are satisfied for any $\kappa \geq 1$ and the proof is completed.
\end{proof}

\begin{theorem}\label{indEthm}
	Let $(\hat{\varepsilon},\hat{\mu})$, $(\hat{\varepsilon}',\hat{\mu}')$ be two different sets of electromagnetic parameters and $\Lambda_{\hat{\varepsilon}}$, $\Lambda_{\hat{\varepsilon}'}$ the associated impedance maps. Assume that $C$, $C'$ are the  factorisation operators corresponding to the electric fields $E$, $E'$ and let $J=C'-C$.  Let us fix boundary normal coordinates for $\hat{\varepsilon}$/$\hat{\varepsilon}'$. If
	\begin{equation*}
	\Lambda_{\hat{\varepsilon}}=\Lambda_{\hat{\varepsilon}'},
	\end{equation*}
	\begin{equation*}  
	\partial_{x_{3}}^{a}\hat{\varepsilon} = \partial_{x_{3}}^{a}\hat{\varepsilon}', \quad \partial_{x_{3}}^{b}\hat{\mu}=\partial_{x_{3}}^{b}\hat{\mu}', \quad \mbox{at} \ x_{3}=0,
	\end{equation*}
	for $a=0,1,2,\dots,\kappa$, $b=0,1,\dots,\kappa-1$
	and
	\begin{equation*}
	J \in \Psi DO^{(1,\kappa)}(\partial M),
	\end{equation*}
	then
	\begin{equation}
	\partial_{x_{3}}^{\kappa}\hat{\mu}=\partial_{x_{3}}^{\kappa}\hat{\mu}', \quad \mbox{at} \ x_{3}=0.
	\end{equation}
\end{theorem}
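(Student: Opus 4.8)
The plan is to mirror the proof of Theorem \ref{indHthm}, interchanging the roles of the magnetic and electric quantities throughout: the field $H$ is replaced by $E$, the factorisation operator $B$ by $C$, and $Y=B'-B$ by $J=C'-C$, so that the governing identity becomes the Riccati equation \eqref{JRiccati} in place of \eqref{YRiccati}. Consistent with the definitions of $T_E, A_E, G_E, Q_E, F_E, R_E$, the metrics $\hat{\varepsilon}$ and $\hat{\mu}$ swap positions, so that the natural leading quantities are now $\hat{\mu}^{-1}$ and $\hat{\varepsilon}$. Under the hypotheses, these agree asymmetrically: $\hat{\mu}^{-1}=\hat{\mu}'^{-1}+x_3^{\kappa}\tilde{e}_{\mu}$ while $\hat{\varepsilon}=\hat{\varepsilon}'+x_3^{\kappa+1}e_{\varepsilon}$, the latter agreeing to one order higher. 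The goal is to show $\tilde{e}_{\mu}=0$ at $x_3=0$, which (since the matrix inverse is a smooth function and the lower derivatives already match) is equivalent to $\partial_{x_3}^{\kappa}\hat{\mu}=\partial_{x_3}^{\kappa}\hat{\mu}'$ at $x_3=0$.

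First I would record that, since $J\in\Psi DO^{(1,\kappa)}(\partial M,\mathbb{R}^+)$, both sides of \eqref{JRiccati} lie in $\Psi DO^{(2,\kappa)}(\partial M,\mathbb{R}^+)$, and I would compute the vector of principal symbols in the sense of Definition \ref{princsymbdef}. This produces a system identical in form to \eqref{vectorkH} but with $A_{E'}, T_{E'}, C^{(1)}, J$ in place of $A_{H'}, T_{H'}, B^{(1)}, Y$, and with source terms assembled from $\partial_{x_3}^{\bullet}(G_E-G_{E'})$, $\partial_{x_3}^{\bullet}(F_E-F_{E'})$ and the higher differences. The decisive observation is that, because $\hat{\varepsilon}$ and $\hat{\varepsilon}'$ agree to order $\kappa$, the contributions of the $\hat{\varepsilon}$-derivatives to $\partial_{x_3}^{\kappa-1}(G_E-G_{E'})$ and $\partial_{x_3}^{\kappa-1}(F_E-F_{E'})$ vanish at $x_3=0$; only the $\hat{\mu}^{-1}$-derivative terms survive, and these are proportional to $\tilde{e}_{\mu}$. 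This is precisely why the inductive scheme runs Theorem \ref{indHthm} first (to upgrade $\hat{\varepsilon}$ to order $\kappa$) before Theorem \ref{indEthm}: without the extra order on $\hat{\varepsilon}$ the source would be contaminated by $e_\varepsilon$ and the isolation of $\tilde{e}_{\mu}$ would fail.

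The key step is then to apply the second equation of this principal-symbol system to $E^{(0)}=\chi_{\hat{\mu}}$ and invoke Lemma \ref{GFHlemma} in its electric form (obtained by the exchange $\hat{\varepsilon}\leftrightarrow\hat{\mu}$), which converts the resulting identity into $\tilde{e}_{\mu}=0$. When $\kappa=1$ this lemma applies directly. When $\kappa\geq2$, I would first verify its hypothesis that the lower-order symbols annihilate $\chi_{\hat{\mu}}$, namely $J^{(l)}\chi_{\hat{\mu}}=0$ for $1-\kappa\leq l\leq -1$ at $x_3=0$; this follows by feeding the symbol formula \eqref{Dx3aE} from Theorem \ref{Dx3Ethm} together with the vanishing statements \eqref{Dx3aE=0} supplied by Theorem \ref{Jthm} into the same combination used at the end of the proof of Theorem \ref{indHthm}.

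The main obstacle is the bookkeeping required to confirm that the electric analog of Lemma \ref{GFHlemma} is genuinely applicable, i.e.\ that the intermediate symbols $J^{(l)}$ kill $\chi_{\hat{\mu}}$ and that the source term isolates exactly $\tilde{e}_{\mu}$; Theorem \ref{Jthm} is what makes this tractable. I expect the execution to be somewhat lighter than in the magnetic case: as noted after Theorem \ref{Jthm}, working in boundary normal coordinates for $\hat{\varepsilon}$ forces $\hat{\varepsilon}^{33}=1$ and $\hat{\varepsilon}^{3\tilde{j}}=0$ at $x_3=0$, so the cross terms that encumber the statements \eqref{Y0}--\eqref{Dx3kappa+1Hmu} in Theorem \ref{Ythm} collapse in \eqref{Dx3kappa+1Eeqs}--\eqref{Dx3aE=0}, making the extraction of $\tilde{e}_{\mu}$ more direct.
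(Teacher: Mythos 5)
Your overall scaffolding matches the paper's: compute the vector of principal symbols of \eqref{JRiccati} in the sense of Definition \ref{princsymbdef}, evaluate on $E^{(0)}=\chi_{\hat{\mu}}$, feed in Theorems \ref{Dx3Ethm} and \ref{Jthm}, and reduce to a lemma converting the resulting identities into $\partial_{x_3}^{\kappa}\hat{\mu}=\partial_{x_3}^{\kappa}\hat{\mu}'$; you also correctly explain why Theorem \ref{indHthm} must run first so that the $\hat{\varepsilon}$-difference sits at order $x_3^{\kappa+1}$. However, there is a genuine gap: the lemma you invoke --- ``Lemma \ref{GFHlemma} in its electric form, obtained by the exchange $\hat{\varepsilon}\leftrightarrow\hat{\mu}$'' --- is not adequate, because the fixed choice of boundary normal coordinates for $\hat{\varepsilon}$/$\hat{\varepsilon}'$ breaks the symmetry between the two cases. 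On the magnetic side the unknown $\tilde{e}_{\varepsilon}$ automatically has only tangential--tangential components (both $\hat{\varepsilon}^{-1}$ and $\hat{\varepsilon}'^{-1}$ are normalized in these coordinates), so the single $(G_H,F_H)$-level identity \eqref{GFkappa} suffices to kill it. On the electric side the unknown $\tilde{e}_{\mu}$ has normal components as well, and the $(G_E,F_E)$-level identity you propose to use yields only $(\tilde{e}_{\mu})_{\tilde{j}b}=0$; it says nothing about $(\tilde{e}_{\mu})_{33}$. This is exactly why the paper's Lemma \ref{GFElemma} carries a \emph{second} hypothesis, \eqref{AQTEeq}, extracted from the top-order equation of the system \eqref{vectorkE} (the one built from $\partial_{x_3}^{\kappa}(A_E-A_{E'})$, $\partial_{x_3}^{\kappa}(Q_E-Q_{E'})$, $\partial_{x_3}^{\kappa}(T_E-T_{E'})$), from which $(\tilde{e}_{\mu})_{33}=0$ is deduced. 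Your plan never touches this component, so it cannot conclude $\partial_{x_3}^{\kappa}\hat{\mu}^{33}=\partial_{x_3}^{\kappa}\hat{\mu}'^{33}$ at $x_3=0$.

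A second, related omission: to make both source terms in Lemma \ref{GFElemma} vanish (in particular to exploit the top-order equation) one needs $J^{(l)}\chi_{\hat{\mu}}=0$ for the full range $1-\kappa\leq l\leq 1$, not just the negative orders $1-\kappa\leq l\leq -1$ that you verify. The case $l=0$ is immediate from \eqref{Dx3aE=0}, but $l=1$ is not: by \eqref{Dx3aE} and \eqref{Dx3kappa+1Eeqs}, $J^{(1)}\chi_{\hat{\mu}}$ is proportional to $r_{\varepsilon}$, the coefficient in $\sqrt{\vert\hat{\varepsilon}^{-1}\vert}-\sqrt{\vert\hat{\varepsilon}'^{-1}\vert}=x_3^{\kappa+1}r_{\varepsilon}$, and agreement of $\hat{\varepsilon}$, $\hat{\varepsilon}'$ to order $\kappa$ does not force this next-order coefficient to vanish. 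The paper disposes of it by taking the $\hat{\mu}'$-inner product with $\nu$ of the second equation in \eqref{vectorkE}, together with \eqref{Tsym}, \eqref{Gsym}, \eqref{Fsym}, concluding $r_{\varepsilon}=0$ and hence $J^{(1)}\chi_{\hat{\mu}}=0$, before Lemma \ref{GFElemma} can be applied. So, contrary to your expectation that the electric case is ``lighter,'' it is structurally heavier: it needs two levels of the principal-symbol hierarchy plus the extra $r_{\varepsilon}=0$ step, neither of which appears in your proposal.
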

\begin{proof}
Given that $J$ $\in$ $\Psi DO^{(1,\kappa)}(\partial M)$ the principal symbol of the Riccati equation \eqref{JRiccati} at $x_{3}=0$ is given by
\begin{equation}\label{vectorkE}
\begin{split}
&A_{E} J^{(1)}+ T_{E}C^{(1)}J^{(1)}+T_{E}J^{(1)}C^{(1)}=\\
&\hskip1cm \frac{1}{\kappa!}\left(\partial_{x_{3}}^{\kappa}(A_{E}-A_{E'})C^{(1)}+ \partial_{x_{3}}^{\kappa}(T_{E}-T_{E'})(C^{(1)})^{2} + \partial_{x_{3}}^{\kappa}(Q_{E}-Q_{E'})\right),\\
&-\kappa i T_{E}J^{(1)}+	A_{E}J^{(0)}+ T_{E}C^{(1)}J^{(0)} + T_{E}J^{(0)}C^{(1)}=\\
&\hskip3.3cm \frac{1}{(\kappa-1)!}\left(\partial_{x^{3}}^{\kappa-1}(G_{E}-G_{E'})C^{(1)} + \partial_{x^{3}}^{\kappa-1}(F_{E}-F_{E'})\right),\\
&-(\kappa-1)i	T_{E}J^{(0)}+	A_{E}J^{(-1)}+T_{E}C^{(1)}J^{(-1)}+ T_{E}J^{(-1)}C^{(1)}\\
&\hskip4cm =\frac{1}{(\kappa-2)!}\partial_{x_{3}}^{\kappa-2}(R_{E}-R_{E'}),\\
&-(\kappa-2) i T_{E}J^{(-1)}+A_{E} J^{(-2)}+ T_{E}C^{(1)}J^{(-2)}+T_{E}J^{(-2)} C^{(1)}=0,\\
&\hskip5cm\vdots
\\
& -i T_{E}J^{-(\kappa-2)}+ A_{E}J^{-(\kappa-1)}+ T_{E}C^{(1)}J^{-(\kappa-1)}+ T_{E}J^{-(\kappa-1)}C^{(1)}=0.
\end{split}
\end{equation}
According to the assumptions of the theorem, the different sets of metrics satisfy
\begin{equation*}
\hat{\varepsilon}=\hat{\varepsilon}'+ x_{3}^{\kappa+1}e_{\varepsilon}, \quad (\hat{\mu}^{-1})=(\hat{\mu}'^{-1})+ x_{3}^{\kappa}\tilde{e}_{\mu},
\end{equation*}
for $e_{\varepsilon}$, $\tilde{e}_{\mu}$ symmetric.
Let us fix $E^{(0)}=\chi_{\hat{\mu}}$.
In order for Lemma \ref{GFElemma} to be applicable, we will show that for any $\kappa \geq 1$ we have
\begin{equation}\label{J^leq}
J^{(l)}\chi_{\hat{\mu}}=0, \quad 1-\kappa \leq l \leq 1,
\end{equation}
at $x_{3}=0$.
Equations \eqref{Dx3aE} and \eqref{Dx3aE=0} show that for any $\kappa \geq 1$
\begin{equation*}
 J^{(l)}\chi_{\hat{\mu}}=0,\quad 1-\kappa\leq l \leq 0 \quad \mbox{at} \ x_{3}=0.
 \end{equation*}
To show that $J^{(1)}$ also vanishes, we consider the second equation in \eqref{vectorkE}, \eqref{Tsym}, \eqref{Gsym}, \eqref{Fsym}, \eqref{Dx3aE} and \eqref{Dx3kappa+1Eeqs}. Using all of these formulas and taking the $\hat{\mu}'$ inner product with $\nu$, we obtain that $r_\epsilon$ appearing in \eqref{Dx3kappa+1Eeqs} must be zero. This implies, again by \eqref{Dx3aE} and \eqref{Dx3kappa+1Eeqs}, that $J^{(1)}\chi_{\hat{\mu}} = 0$. Hence, we obtain that equation \eqref{J^leq} holds for any $\kappa \geq 1$ which allows us to apply Lemma \ref{GFElemma} and the proof is completed.
\end{proof}

The following two lemmas are used in the proofs of Theorems \ref{Ythm} and \ref{Jthm} and include many of the technical calculations.

\begin{lemma}\label{GFHlemma}
	Let $(\hat{\varepsilon},\hat{\mu})$, $(\hat{\varepsilon}',\hat{\mu}')$ be two different sets of electromagnetic parameters. Let us fix boundary normal coordinates for $\hat{\varepsilon}$/ $\hat{\varepsilon}'$. If
	\begin{equation}\label{Lem7hyp}
	\hat{\varepsilon}^{-1}= \hat{\varepsilon}'^{-1}+ x_{3}^{\kappa}\tilde{e}_{\varepsilon}, \quad \hat{\mu}= 	\hat{\mu}'+ x_{3}^{\kappa}e_{\mu}
	\end{equation}
	and $Y$ $\in$ $\Psi DO^{(1,\kappa)}(\partial M)$ such that for any $\kappa\geq2$
	\begin{equation*}
	Y^{(a)}\chi_{\hat{\varepsilon}}=0, \quad 1-\kappa \leq a \leq -1,
	\end{equation*}
	then for any $\kappa \geq 1$, equation
	\begin{equation}\label {GFkappa}
	\begin{split}
&\left(T_{H'}(	-\kappa i Y^{(1)}+ B^{(1)}Y^{(0)} + Y^{(0)}B^{(1)})+A_{H'}Y^{(0)}\right)\chi_{\hat{\varepsilon}}=\\
&\hskip2cm \frac{1}{(\kappa-1)!}\left(\partial_{x^{3}}^{\kappa-1}(G_{H}-G_{H'})B^{(1)}+ \partial_{x^{3}}^{\kappa-1}(F_{H}-F_{H'})\right)\chi_{\hat{\varepsilon}}
\end{split}
	\end{equation}
	implies that
	\begin{equation*} \partial_{x_{3}}^{\kappa}\hat{\varepsilon}=\partial_{x_{3}}^{\kappa}\hat{\varepsilon}',
\end{equation*}	
\end{lemma}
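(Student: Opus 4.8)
The plan is to read equation \eqref{GFkappa} as a linear relation in which the $\hat{\mu}$-data $e_{\mu},r_{\mu}$ control the left-hand side through the operator $Y$, while the $\hat{\varepsilon}$-data $\tilde{e}_{\varepsilon}$ enters only the right-hand side; after matching the $\hat{\mu}$-contributions on the two sides, the residual relation should force $\tilde{e}_{\varepsilon}=0$. Throughout I work in boundary normal coordinates for $\hat{\varepsilon}$/$\hat{\varepsilon}'$, so that $\hat{\varepsilon}^{33}=\hat{\varepsilon}'^{33}=1$ and $\hat{\varepsilon}^{3\tilde{j}}=\hat{\varepsilon}'^{3\tilde{j}}=0$ for all sufficiently small $x_{3}$; consequently the difference tensor $\tilde{e}_{\varepsilon}$ in \eqref{Lem7hyp} has only tangential components, and it suffices to show these vanish at $x_{3}=0$, which is equivalent to $\partial_{x_{3}}^{\kappa}\hat{\varepsilon}=\partial_{x_{3}}^{\kappa}\hat{\varepsilon}'$.

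First I would evaluate the left-hand side of \eqref{GFkappa}. Using the hypothesis $Y^{(a)}\chi_{\hat{\varepsilon}}=0$ for $1-\kappa\le a\le-1$ together with the eigenvector identity $B^{(1)}\chi_{\hat{\varepsilon}}={\xi_{\hat{\varepsilon}}}_{3}\,\chi_{\hat{\varepsilon}}$ (immediate from \eqref{BJordan}, since $\chi_{\hat{\varepsilon}}$ is the first column of $X_{H}$ with eigenvalue ${\xi_{\hat{\varepsilon}}}_{3}$), I substitute into formula \eqref{Dx3aH} of Theorem \ref{Dx3Hthm}. Every summand with $0\le j-n\le\kappa-2$ then vanishes, which simultaneously recovers $\sigma_{p}(D_{x_{3}}^{a}\tilde{H})=0$ for $1\le a\le\kappa-1$, leaves $\sigma_{p}(D_{x_{3}}^{\kappa}\tilde{H})$ proportional to $Y^{(0)}\chi_{\hat{\varepsilon}}$, and makes $\sigma_{p}(D_{x_{3}}^{\kappa+1}\tilde{H})$ a known combination of $Y^{(1)}\chi_{\hat{\varepsilon}}$ and $Y^{(0)}\chi_{\hat{\varepsilon}}$. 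Inverting these two relations and inserting the explicit expressions \eqref{Y0}, \eqref{Dx3kappa+1Htang} and \eqref{Dx3kappa+1Hmu} from Theorem \ref{Ythm} writes both $Y^{(0)}\chi_{\hat{\varepsilon}}$ and $Y^{(1)}\chi_{\hat{\varepsilon}}$ purely in terms of $e_{\mu}$ and $r_{\mu}$. Since these are exactly the vectors appearing in $T_{H'}(-\kappa i Y^{(1)}+B^{(1)}Y^{(0)}+Y^{(0)}B^{(1)})\chi_{\hat{\varepsilon}}+A_{H'}Y^{(0)}\chi_{\hat{\varepsilon}}$ (recalling $Y^{(0)}B^{(1)}\chi_{\hat{\varepsilon}}={\xi_{\hat{\varepsilon}}}_{3}Y^{(0)}\chi_{\hat{\varepsilon}}$), the left-hand side becomes an explicit function of $e_{\mu}$, $r_{\mu}$ and the background metrics.

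Next I would compute the right-hand side directly from the symbol formulas \eqref{Gsym} and \eqref{Fsym}. The operators $G_{H}$ and $F_{H}$ are linear in first derivatives of $\hat{\varepsilon}^{-1}$ and $\hat{\mu}$; substituting \eqref{Lem7hyp} and applying $\tfrac{1}{(\kappa-1)!}\partial_{x_{3}}^{\kappa-1}$ at $x_{3}=0$, only the terms in which a normal derivative lands on a factor $x_{3}^{\kappa}$ survive. Thus $\partial_{x_{3}}^{\kappa-1}(G_{H}-G_{H'})$ and $\partial_{x_{3}}^{\kappa-1}(F_{H}-F_{H'})$ reduce to a linear combination of $\tilde{e}_{\varepsilon}$, coming from the $(\hat{\varepsilon}^{-1})$-cofactor term carrying $\sigma^{3\tilde{k}\tilde{b}}$, and of $e_{\mu},r_{\mu}$, coming from the $\hat{\mu}$-terms, with coefficients built from $\tilde{\varepsilon}$, $\tilde{\mu}$ and $\tilde{\xi}$. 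Feeding the left-hand side from the previous step into \eqref{GFkappa}, I would then verify that every term carrying $e_{\mu}$ or $r_{\mu}$ cancels between the two sides; this cancellation is precisely the compatibility of the Riccati relation \eqref{RiccatiB} with the divergence/Maxwell computation of Theorem \ref{Ythm} in the $\hat{\mu}$-directions. What remains is a relation of the form $\mathcal{L}(\tilde{e}_{\varepsilon})\,\chi_{\hat{\varepsilon}}=0$, linear in the tangential tensor $\tilde{e}_{\varepsilon}$.

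Finally I would conclude $\tilde{e}_{\varepsilon}=0$. Because $\chi_{\hat{\varepsilon}}$ is the distinguished $\hat{\varepsilon}$-eigencovector and the surviving coefficient arises from the nondegenerate cofactor pairing $\sigma^{ajq}\sigma^{3\tilde{k}\tilde{b}}$ contracted against the positive definite $\tilde{\varepsilon}$, I expect $\mathcal{L}$ to be injective on symmetric tangential tensors once the relation is tested against a suitable family of covectors and $\tilde{\xi}$ is allowed to vary over an open set, exactly the device already used after \eqref{xietaeq}. This yields $\tilde{e}_{\varepsilon}^{\tilde{i}\tilde{j}}=0$ at $x_{3}=0$, hence $\partial_{x_{3}}^{\kappa}\hat{\varepsilon}=\partial_{x_{3}}^{\kappa}\hat{\varepsilon}'$. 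The main obstacle is the bookkeeping in the last two steps: carrying out the $\partial_{x_{3}}^{\kappa-1}$-differentiation of the symbol differences cleanly and, above all, checking that the $e_{\mu}$ and $r_{\mu}$ contributions cancel exactly, so that the final relation genuinely involves $\tilde{e}_{\varepsilon}$ alone with an invertible coefficient.
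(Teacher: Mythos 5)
Your proposal follows essentially the same route as the paper's own proof: use the hypothesis on $Y$ together with the eigenvector identity $B^{(1)}\chi_{\hat{\varepsilon}}={\xi_{\hat{\varepsilon}}}_{3}\chi_{\hat{\varepsilon}}$ and Theorem \ref{Dx3Hthm} to reduce the left side of \eqref{GFkappa} to $Y^{(0)}\chi_{\hat{\varepsilon}}$, $Y^{(1)}\chi_{\hat{\varepsilon}}$, express these via Theorem \ref{Ythm} in terms of $e_{\mu}$, $r_{\mu}$, compute the right side from \eqref{Gsym}--\eqref{Fsym}, observe that all $e_{\mu}$, $r_{\mu}$ contributions cancel identically, and conclude $\tilde{e}_{\varepsilon}=0$ from the surviving relation holding for all $\tilde{\xi}$ and all components. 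This is exactly the cancellation structure in the paper's equations \eqref{LHScoordkappa}--\eqref{RHScoordkappa}, so the plan is correct and matches the published argument.
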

\begin{proof}
We first show that for any $\kappa \geq 1$,
\begin{equation}\label{Lem71}
(Y^{(0)})_{\tilde{l}}^{k}{\chi_{\hat{\varepsilon}}}_{k}=0, \quad  (Y^{(0)})_{3}^{k}{\chi_{\hat{\varepsilon}}}_{k}=-i \kappa \Bigg (
\frac{\langle \nu, \chi_{\hat{\varepsilon}}\rangle_{e_{\mu}}}{\hat{\mu}'^{33}}+\frac{r_\mu \langle \nu, \chi_{\hat{\varepsilon}}\rangle_{\hat{\mu}'}}{\sqrt{\vert \hat{\mu}'^{-1} \vert}\hat{\mu}'^{33}} \Bigg ),
\end{equation}
at $x_{3}=0$. Evaluating equation \eqref{Dx3aH} for $a=\kappa$ at $H^{(0)}=\chi_{\varepsilon}$ results to
 \begin{equation*}
 \sigma_{p}(D_{x_{3}}^{\kappa}\tilde{H})\vert_{H^{(0)}=\chi_{\hat{\varepsilon}}}= \sum_{j=0}^{\kappa-1}\sum_{n=0}^{j} \left(\begin{matrix}
 j\\
 n
 \end{matrix}\right) (B^{(1)})^{\kappa-1-j} \sigma_{p}(D_{x_{3}}^{j-n}Y)(B^{(1)})^{n}\chi_{\hat{\varepsilon}}.
 \end{equation*}
 The assumption of the lemma on $Y$ implies that 
 \begin{equation*}
 \sigma_{p}(D_{x_{3}}^{\kappa}\tilde{H})\vert_{H^{(0)}=\chi_{\hat{\varepsilon}}}= \sigma_{p}(D_{x_{3}}^{\kappa-1}Y)\chi_{\hat{\varepsilon}}=(-i)^{\kappa-1}(\kappa-1)!Y^{(0)}\chi_{\hat{\varepsilon}},
 \end{equation*}
 at $x_{3}=0$.
Equation \eqref{Y0} then implies \eqref{Lem71}.

Now let us write \eqref{Dx3aH} for $a=\kappa+1$ evaluated at $H^{(0)}=\chi_{\hat{\varepsilon}}$, which gives
	\begin{equation*}
	\sigma_{p}(D_{x_{3}}^{\kappa+1}\tilde{H})\vert_{H^{(0)}=\chi_{\hat{\varepsilon}}}= \sum_{j=0}^{\kappa}\sum_{n=0}^{j} \left(\begin{matrix}
 j\\
 n
 \end{matrix}\right) (B^{(1)})^{\kappa-j} \sigma_{p}(D_{x_{3}}^{j-n}Y)(B^{(1)})^{n}\chi_{\hat{\varepsilon}}.
	\end{equation*}
The assumption of the theorem on $Y$ implies that the only nonzero terms in the above arise for $j = \kappa$, $n=0, 1$ and $j=\kappa-1$, $n=0$. Thus, we have
	\begin{equation*}
	\sigma_{p}(D_{x_{3}}^{\kappa+1}\tilde{H})\vert_{H^{(0)}=\chi_{\hat{\varepsilon}}}= (-i)^{\kappa}\kappa!Y^{(1)}\chi_{\hat{\varepsilon}}+(-i)^{\kappa-1}(\kappa-1)! \left (B^{(1)}Y^{(0)}+Y^{(0)} B^{(1)} \right )\chi_{\hat{\varepsilon}}.
	\end{equation*}
	Hence, we observe that the left hand-side of \eqref{GFkappa} is equal to
	\begin{equation}\label{LHSkappa}
	T_{H'}\frac{	\sigma_{p}(D_{x_{3}}^{\kappa+1}\tilde{H})\vert_{H^{(0)}=\chi_{\hat{\varepsilon}}}}{(-i)^{\kappa-1}(\kappa-1)!}+ A_{H'}Y^{(0)}\chi_{\hat{\varepsilon}}.
	\end{equation}
	Using equations \eqref{Tsym}, \eqref{Asym}, \eqref{Dx3kappa+1Htang}, \eqref{Dx3kappa+1Hmu} and \eqref{Lem71}, the left hand side of \eqref{GFkappa}, as shown in \eqref{LHSkappa}, is equal to
		\begin{equation}\label{LHScoordkappa}
	- i \kappa \frac{\sqrt{\vert \hat{\mu}^{-1}\vert}}{\sqrt{\vert\hat{\varepsilon}^{-1}\vert}}\left ( (\hat{\varepsilon}'^{-1})_{lq}\hat{\mu}^{q3} \langle \xi_{\hat{\epsilon}}, \chi_{\varepsilon}\rangle_{e_{\mu}}+ (\hat{\varepsilon}'^{-1})_{lq}\hat{\mu}^{qa}\xi_{\hat{\epsilon}a} \left (e_{\mu}^{3j} + \frac{r_\mu\hat{\mu}'^{3j}}{\sqrt{\vert\hat{\mu}'^{-1} \vert}}\right )\chi_{\hat{\epsilon}j}  \right ).
	\end{equation}
Using formulas \eqref{Gsym}, \eqref{Fsym} and \eqref{Lem7hyp}, the right hand side of \eqref{GFkappa} is written in coordinates as
	\begin{equation}\label{RHScoordkappa}
	\begin{split}
	&- i \kappa \frac{\sqrt{\vert \hat{\mu}^{-1}\vert}}{\sqrt{\vert\hat{\varepsilon}^{-1}\vert}}\left ( (\hat{\varepsilon}'^{-1})_{lq}\hat{\mu}^{q3} \langle \xi_{\hat{\epsilon}}, \chi_{\varepsilon}\rangle_{e_{\mu}}+ (\hat{\varepsilon}'^{-1})_{lq}\hat{\mu}^{qa}\xi_{\hat{\epsilon}a} \left (e_{\mu}^{3j} + \frac{r_\mu\hat{\mu}'^{3j}}{\sqrt{\vert\hat{\mu}'^{-1} \vert}}\right )\chi_{\hat{\epsilon}j}  \right )\\
	&\hskip 3cm+i \kappa \frac{(\hat{\varepsilon}'^{-1})_{lq}}{\vert \hat{\varepsilon}'^{-1}\vert} \sigma^{3 \tilde{j}\tilde{q}} (\tilde{e}_{\varepsilon})_{\tilde{b}\tilde{j}} \sigma^{d k \tilde{b}} \xi_{\hat{\varepsilon}d} \chi_{\hat{\varepsilon}k}.
	\end{split}
	\end{equation}
	Equating \eqref{LHScoordkappa} with \eqref{RHScoordkappa} we arrive at
	\begin{equation*}
	i \kappa \frac{(\hat{\varepsilon}'^{-1})_{lq}}{\vert \hat{\varepsilon}'^{-1}\vert} \sigma^{3 \tilde{j}\tilde{q}} (\tilde{e}_{\varepsilon})_{\tilde{b}\tilde{j}} \sigma^{d k \tilde{b}} \xi_{\hat{\varepsilon}d} \chi_{\hat{\varepsilon}k}=0,
\end{equation*}
which, substituting a coordinate expression for $\chi_{\hat{\varepsilon}}$ and cancelling some terms, is simplified to
\begin{equation*}
(\varepsilon^{-1})_{\tilde{l}\tilde{l'}}\sigma^{3\tilde{\beta}\tilde{l'}}(\tilde{e}_{\varepsilon})_{\tilde{b}\tilde{\beta}}\varepsilon^{\tilde{b}\tilde{b'}}{\xi}_{\tilde{b'}}=0.
\end{equation*}
This holds for all $\tilde{\xi}$ and $l$ and so is satisfied if and only if $\tilde{e}_\varepsilon = 0$. Therefore $\hat{\varepsilon}^{-1}$ and $\hat{\varepsilon}'^{-1}$ agree up to the $\kappa$ derivative, and so the same is true for $\hat{\varepsilon}$ and $\hat{\varepsilon}'$. This completes the proof.
\end{proof}
\begin{lemma}\label{GFElemma}
	Let $(\hat{\varepsilon},\hat{\mu})$, $(\hat{\varepsilon}',\hat{\mu}')$ be two different sets of electromagnetic parameters. Let us fix boundary normal coordinates for $\hat{\varepsilon}$/$\hat{\varepsilon}'$. If
	\begin{equation*}
	\hat{\varepsilon}=\hat{\varepsilon}'+ x_{3}^{\kappa+1}e_{\varepsilon}, \quad (\hat{\mu}^{-1})=(\hat{\mu}'^{-1})+ x_{3}^{\kappa}\tilde{e}_{\mu}
	\end{equation*}
	and for any $\kappa \geq 1$ the following equations hold at $x_{3}=0$
	\begin{equation}\label{GFEeq}
	\left(\partial_{x^{3}}^{\kappa-1}(G_{E}-G_{E'})C^{(1)} + \partial_{x^{3}}^{\kappa-1}(F_{E}-F_{E'})\right)\chi_{\hat{\mu}}=0,
	\end{equation}
	\begin{equation}\label{AQTEeq}
	\left(\partial_{x_{3}}^{\kappa}(A_{E}-A_{E'})C^{(1)}+ \partial_{x_{3}}^{\kappa}(Q_{E}-Q_{E'})+ \partial_{x_{3}}^{\kappa}(T_{E}-T_{E'})(C^{(1)})^{2}\right)\chi_{\hat{\mu}}=0,
	\end{equation}
	then,
	\begin{equation*}
	\partial_{x_{3}}^{\kappa}\hat{\mu}=\partial_{x_{3}}^{\kappa}
	\hat{\mu}', \quad \mbox{at} \ x_{3}=0.
\end{equation*}
\end{lemma}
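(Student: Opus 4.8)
The aim is to show that $\tilde{e}_{\mu}=0$ at $x_{3}=0$, which by the hypothesis $(\hat{\mu}^{-1})=(\hat{\mu}'^{-1})+x_{3}^{\kappa}\tilde{e}_{\mu}$ is equivalent to the conclusion $\partial_{x_{3}}^{\kappa}\hat{\mu}=\partial_{x_{3}}^{\kappa}\hat{\mu}'$. The plan is to mirror the proof of Lemma \ref{GFHlemma} with the roles of $\hat{\varepsilon}$ and $\hat{\mu}$ interchanged, reading off $T_{E},A_{E},G_{E},Q_{E},F_{E}$ from \eqref{Tsym}--\eqref{Fsym} by the substitution $\hat{\varepsilon}\leftrightarrow\hat{\mu}$, and to recover all components of the symmetric tensor $\tilde{e}_{\mu}$ from the two hypotheses \eqref{GFEeq} and \eqref{AQTEeq}. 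The decisive simplification is that here $\hat{\varepsilon}=\hat{\varepsilon}'+x_{3}^{\kappa+1}e_{\varepsilon}$, so that $\hat{\varepsilon}$ and hence $\sqrt{\vert\hat{\varepsilon}^{-1}\vert}$ agree with their primed counterparts to order $\kappa$ at $x_{3}=0$. Therefore, in the normal-derivative differences entering \eqref{GFEeq} and \eqref{AQTEeq}, every term in which an $\hat{\varepsilon}$-factor is differentiated, or in which a differing $\hat{\varepsilon}$-factor is merely multiplied, vanishes at the boundary; only $\hat{\mu}$-contributions remain, and among the first-order terms only a normal ($a=3$) derivative of $\hat{\mu}^{-1}$ survives, since a tangential derivative of $\hat{\mu}-\hat{\mu}'=O(x_{3}^{\kappa})$ still vanishes at $x_{3}=0$. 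Unlike in Lemma \ref{GFHlemma}, we work in boundary normal coordinates for $\hat{\varepsilon}$ and not for $\hat{\mu}$, so the normal components of $\tilde{e}_{\mu}$ are not a priori zero and must be recovered as well.

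Because $\chi_{\hat{\mu}}$ is the eigenvector of $C^{(1)}$ in \eqref{CJordan} with eigenvalue ${\xi_{\hat{\mu}}}_{3}$, we have $C^{(1)}\chi_{\hat{\mu}}={\xi_{\hat{\mu}}}_{3}\chi_{\hat{\mu}}$, so \eqref{GFEeq} reads $\big({\xi_{\hat{\mu}}}_{3}\,\partial_{x_{3}}^{\kappa-1}(G_{E}-G_{E'})+\partial_{x_{3}}^{\kappa-1}(F_{E}-F_{E'})\big)\chi_{\hat{\mu}}=0$. By the simplification above the first lines of $G_{E}$ and $F_{E}$, which carry the $\hat{\varepsilon}$-derivatives, drop out entirely, and the surviving second lines, namely the curl--curl parts built from $\partial_{x_{a}}(\hat{\mu}^{-1})$, combine exactly as the second lines of $G_{H}$ and $F_{H}$ combined into the final term of \eqref{RHScoordkappa}, producing an alternating-tensor contraction proportional to $\sigma^{3\tilde{j}\tilde{q}}(\tilde{e}_{\mu})_{b\tilde{j}}\,\sigma^{dkb}\,{\xi_{\hat{\mu}}}_{d}\,{\chi_{\hat{\mu}}}_{k}$. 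Demanding that this vanish for all $\tilde{\xi}$ and all free indices, by the elimination carried out at the end of Lemma \ref{GFHlemma} (substitute a coordinate expression for $\chi_{\hat{\mu}}$ and vary $\tilde{\xi}$), forces the tangential part of $\tilde{e}_{\mu}$ to vanish at $x_{3}=0$.

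To recover the remaining, normal, components of $\tilde{e}_{\mu}$ I would turn to the principal identity \eqref{AQTEeq}, which by $(C^{(1)})^{2}\chi_{\hat{\mu}}={\xi_{\hat{\mu}}}_{3}^{2}\chi_{\hat{\mu}}$ collapses to $\big({\xi_{\hat{\mu}}}_{3}^{2}\,\partial_{x_{3}}^{\kappa}(T_{E}-T_{E'})+{\xi_{\hat{\mu}}}_{3}\,\partial_{x_{3}}^{\kappa}(A_{E}-A_{E'})+\partial_{x_{3}}^{\kappa}(Q_{E}-Q_{E'})\big)\chi_{\hat{\mu}}=0$. Since $T_{E},A_{E},Q_{E}$ are algebraic in the metrics, these derivative-differences reduce at $x_{3}=0$ to linear expressions in the leading $\hat{\mu}$-coefficients $e_{\mu},\tilde{e}_{\mu},r_{\mu}$, all of which are determined by $\tilde{e}_{\mu}$ through $e_{\mu}=-\hat{\mu}'\tilde{e}_{\mu}\hat{\mu}'$ and the induced variation of $\sqrt{\vert\hat{\mu}^{-1}\vert}$. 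Expanding with the $\hat{\varepsilon}\leftrightarrow\hat{\mu}$ forms of \eqref{Tsym}, \eqref{Asym}, \eqref{Qsym}, the pieces $\delta_{l}^{k}\hat{\mu}^{33}$ and $\delta_{l}^{3}\hat{\mu}^{3k}$ of $T_{E}$ carry the normal components $e_{\mu}^{33},e_{\mu}^{3k}$, and combined with the tangential information already obtained this yields, as $\tilde{\xi}$ and $l$ vary, enough independent scalar equations to conclude that the normal components of $\tilde{e}_{\mu}$ vanish too. Hence $\tilde{e}_{\mu}=0$, i.e. $\partial_{x_{3}}^{\kappa}\hat{\mu}=\partial_{x_{3}}^{\kappa}\hat{\mu}'$ at $x_{3}=0$.

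The step I expect to be the main obstacle is the coordinate bookkeeping behind these reductions: isolating in $\partial_{x_{3}}^{\kappa-1}(G_{E}-G_{E'})$, $\partial_{x_{3}}^{\kappa-1}(F_{E}-F_{E'})$ and the algebraic $T_{E},A_{E},Q_{E}$ differences precisely the surviving $\hat{\mu}$-terms, and then checking that the alternating-tensor contraction from \eqref{GFEeq} together with the algebraic identity from \eqref{AQTEeq} between them pin down every entry of the symmetric tensor $\tilde{e}_{\mu}$. The degenerate set ${\xi_{\hat{\varepsilon}}}_{3}={\xi_{\hat{\mu}}}_{3}$, where $\chi_{\hat{\mu}}$ is not defined, is dealt with as elsewhere in the paper by the continuity of the symbols in the parameters noted in Remark \ref{basisremark}.
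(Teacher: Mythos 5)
Your proposal follows essentially the same route as the paper's own proof: it first uses \eqref{GFEeq}, together with the eigenvector relation $C^{(1)}\chi_{\hat{\mu}}={\xi_{\hat{\mu}}}_{3}\chi_{\hat{\mu}}$ and the fact that the $\hat{\varepsilon}$-differences vanish to higher order, to reduce to the alternating-tensor contraction that kills every component of $\tilde{e}_{\mu}$ carrying a tangential index, and then uses \eqref{AQTEeq} to eliminate the remaining normal component(s). The only difference is cosmetic: where you assert that the algebraic $T_{E},A_{E},Q_{E}$ differences yield enough independent equations, the paper reduces this step to the single scalar identity $\ast_{\hat{\mu}}\left(\nu\wedge\ast_{\hat{\mu}}(\chi_{\hat{\mu}}\wedge\xi_{\hat{\mu}})\right)\langle\nu,\xi_{\hat{\mu}}\rangle_{\hat{\mu}}(\tilde{e}_{\mu})_{33}=0$ and checks the coefficient is nonzero, which is the same computation at a comparable level of explicitness.
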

\begin{proof}
	Making use of the symbols $G_{E}$, $F_{E}$, which are given by equations \eqref{Gsym} and \eqref{Fsym} with $\hat{\varepsilon}$ and $\hat{\mu}$ switched, and the assumptions of the lemma on the different sets of metrics, equation \eqref{GFEeq} implies
	\begin{equation*}
	(\hat{\mu}^{-1})_{l\tilde{q}}\sigma^{3\tilde{j}\tilde{q}}(\tilde{e}_{\mu})_{\tilde{j}b}\sigma^{dkb}{\xi_{\hat{\mu}}}_{d}{\chi_{\hat{\mu}}}_{k}=0.
	\end{equation*}
	Substituting the expression of $\chi_{\hat{\mu}}$. the above reduces to
	\begin{equation*}
	(\hat{\mu}^{-1})_{l\tilde{q}}\sigma^{3\tilde{j}\tilde{q}}(\tilde{e}_{\mu})_{\tilde{j}b}\hat{\mu}^{bb'}{\xi_{\hat{\mu}}}_{b'}=0, \quad \mbox{at} \ x_{3}=0.
	\end{equation*}
	Therefore, we obtain 
	\begin{equation}\label{tildeemu=0}
	(\tilde{e}_{\mu})_{\tilde{j}b}=0, \quad \mbox{at} \ x_{3}=0
	\end{equation}
	which implies in particular that  $(\tilde{e}_{\mu})_{33}$ is the only remaining nonzero component at $x_3 = 0$. Considering \eqref{AQTEeq} while using the the symbols $A_{E}$, $Q_{E}$ and $T_{E}$, determined respectively from \eqref{Asym}, \eqref{Qsym} and \eqref{Tsym}, the hypothesis on the metrics as well as \eqref{tildeemu=0}, we can calculate
	\begin{equation*}
	\ast_{\hat{\mu}}\left (\nu \wedge \ast_{\hat{\mu}} ( \chi_{\hat{\mu}} \wedge \xi_{\hat{\mu}})\right ) \langle \nu, \xi_{\hat{\mu}} \rangle_{\hat{\mu}} (\tilde{e}_\mu)_{33} = 0.
	\end{equation*}
	Explicit calculation in coordinates using the definitions of $\chi_{\hat{\mu}}$ and $\xi_{\hat{\mu}}$ shows that the coefficient in the previous formula is not zero, and so we deduce 
	\begin{equation*}
	(\tilde{e}_{\mu})_{33}=0, \quad \mbox{at} \ x_{3}=0.
	\end{equation*}
	Therefore, we have shown that 
	\begin{equation*}
	\partial_{x_{3}}^{\kappa}\hat{\mu}^{-1}=\partial_{x_{3}}^{\kappa}\hat{\mu}'^{-1}, \quad \mbox{at} \ x_{3}=0,
	\end{equation*}
	which completes the proof.
\end{proof}

\begin{theorem}\label{orderofYJthm}
	Let $(\hat{\varepsilon},\hat{\mu})$ and $(\hat{\varepsilon}',\hat{\mu}')$ be two different sets of electromagnetic parameters and $\Lambda_{\hat{\varepsilon}}$, $\Lambda_{\hat{\varepsilon}'}$ the associated impedance maps. Assume that $B$, $B'$ are the factorisation operators corresponding to the magnetic fields $H$, $H'$ and let $Y=B'-B$. Assume further that $C$, $C'$ are the factorisation operators corresponding to the electric fields $E$, $E'$ and let $J=C'-C$. Let us fix boundary normal coordinates for $\hat{\varepsilon}$/$\hat{\varepsilon}'$. If
	\begin{equation*}
	\Lambda_{\hat{\varepsilon}}=\Lambda_{\hat{\varepsilon}'},
	\end{equation*}
	\begin{equation*}  
	\partial_{x^{3}}^{a}\hat{\varepsilon}= \partial_{x^{3}}^{a}\hat{\varepsilon}', \quad \partial_{x^{3}}^{a}\hat{\mu}=\partial_{x^{3}}^{a}\hat{\mu}', \quad \mbox{at} \ x_{3}=0,
	\end{equation*}
	for $a=0, \dots \kappa$
	and
	\begin{equation*}
	Y, J \in \Psi DO^{(1,\kappa)}(\partial M,\mathbb{R}^+),
	\end{equation*}
	then 
	\begin{equation*}
	Y, J\in \Psi DO^{(1,\kappa+1)}(\partial M,\mathbb{R}^+).
	\end{equation*}
\end{theorem}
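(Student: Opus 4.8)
The plan is to reduce the claim to the vanishing of the full principal symbol vector of $Y$ (and of $J$) at $x_{3}=0$, in the sense of Definition \ref{princsymbdef}, and then to extract this vanishing from the Riccati equation \eqref{YRiccati} (respectively \eqref{JRiccati}). First I would record the characterisation that makes the statement precise: combining Definition \ref{familyofPsiDosdef} with Lemma \ref{PsiDOcalclem}, for $Y \in \Psi DO^{(1,\kappa)}(\partial M,\mathbb{R}^+)$ one has $Y \in \Psi DO^{(1,\kappa+1)}(\partial M,\mathbb{R}^+)$ if and only if $\partial_{x_{3}}^{l} Y\vert_{x_{3}=0} \in \Psi DO^{(l-\kappa)}(\partial M)$ for $0\le l \le \kappa$, the remaining case $l=\kappa+1$ being automatic since $Y$ has order $1$. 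Equivalently, writing the principal symbol vector as the family $\{Y^{(1)},Y^{(0)},\dots,Y^{(1-\kappa)}\}$ appearing in the proof of Theorem \ref{indHthm}, the requirement is exactly that every component vanishes at $x_{3}=0$, since each $\sigma_{p,l}(Y)$, of a priori order $1-\kappa+l$, then drops one order and hence vanishes. Thus it suffices to show $Y^{(1)}=\dots=Y^{(1-\kappa)}=0$ at $x_{3}=0$, and analogously for $J$.

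Next I would exploit the agreement of the parameters to order $\kappa$ to turn the principal symbol system \eqref{vectorkH} of the Riccati equation into a \emph{homogeneous} system. Because $\hat{\varepsilon},\hat{\mu}$ and $\hat{\varepsilon}',\hat{\mu}'$, together with all their $x_{3}$-derivatives up to order $\kappa$, coincide at $x_{3}=0$, and because $T_{H},A_{H},Q_{H}$ are algebraic in the parameters while $G_{H},F_{H}$ and $R_{H}$ involve at most one and two parameter derivatives respectively, each difference on the right of \eqref{vectorkH} reaches at most the $\kappa$-th $x_{3}$-derivative of the parameters: $\partial_{x_{3}}^{\kappa}(A_{H}-A_{H'})$, $\partial_{x_{3}}^{\kappa}(T_{H}-T_{H'})$, $\partial_{x_{3}}^{\kappa}(Q_{H}-Q_{H'})$, then $\partial_{x_{3}}^{\kappa-1}(G_{H}-G_{H'})$, $\partial_{x_{3}}^{\kappa-1}(F_{H}-F_{H'})$, and finally $\partial_{x_{3}}^{\kappa-2}(R_{H}-R_{H'})$ all vanish at $x_{3}=0$. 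The system \eqref{vectorkH} then collapses to a triangular homogeneous system whose diagonal operator is the Sylvester map $\mathcal{S}(Z):=A_{H'}Z+T_{H'}B^{(1)}Z+T_{H'}ZB^{(1)}$ and whose subdiagonal couples consecutive components through $T_{H'}$.

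Finally I would prove that $\mathcal{S}$ is injective and solve the triangular system from the top down. Using the factorisation \eqref{Mfactorization}, at $x_{3}=0$ one has $A_{H'}=-\bigl((B^{\ast})^{(1)}T_{H}+T_{H}B^{(1)}\bigr)$, so that $\mathcal{S}(Z)=0$ is equivalent, after setting $W=T_{H}Z$ with $T_{H}$ invertible, to the Sylvester equation $WB^{(1)}=(B^{\ast})^{(1)}W$. By construction the eigenvalues of $B^{(1)}$ are ${\xi_{\hat{\varepsilon}}}_{3},{\xi_{\hat{\mu}}}_{3}$, which have strictly positive imaginary part by \eqref{xihatmu3} and \eqref{xihatepsilon3}, whereas $(B^{\ast})^{(1)}$ is the complementary divisor whose spectrum consists of the conjugate roots with strictly negative imaginary part; the two spectra are disjoint, forcing $W=0$ and hence $Z=0$. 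Injectivity of $\mathcal{S}$ applied to the first equation of \eqref{vectorkH} gives $Y^{(1)}=0$, and each subsequent equation then expresses $\mathcal{S}$ of the next lower-order component as a multiple of $T_{H'}$ times a component already shown to vanish, so downward induction yields $Y^{(1)}=\dots=Y^{(1-\kappa)}=0$. Running the identical argument on \eqref{vectorkE} and \eqref{JRiccati}, with $C^{(1)}$ and its complementary divisor in place of $B^{(1)}$, gives $J\in\Psi DO^{(1,\kappa+1)}(\partial M,\mathbb{R}^+)$, completing the proof. I expect the main obstacle to be the injectivity of $\mathcal{S}$: everything hinges on the spectral separation of $B^{(1)}$ and $(B^{\ast})^{(1)}$ furnished by the two-sided factorisation, together with the bookkeeping needed to confirm that agreement of the parameters to order $\kappa$ annihilates every inhomogeneous term in \eqref{vectorkH}.
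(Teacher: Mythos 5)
Your argument for $\kappa \geq 1$ is correct and is essentially the paper's own proof. Under order-$\kappa$ agreement of the parameters the right-hand sides of \eqref{vectorkH} and \eqref{vectorkE} vanish (the paper routes this observation through Theorems \ref{indHthm} and \ref{indEthm}, while your derivative bookkeeping for $T_H,A_H,Q_H$ versus $G_H,F_H,R_H$ gets it directly; the content is the same), the triangular system is solved top-down, and injectivity of the Sylvester map comes from the spectral separation of $B^{(1)}$, with eigenvalues ${\xi_{\hat{\varepsilon}}}_{3},{\xi_{\hat{\mu}}}_{3}$ in the upper half-plane, from $(B^{\ast})^{(1)}$, whose eigenvalues are the conjugates. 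The paper extracts this from \eqref{Mfactorization} via $Q_{H}=(B^{\ast})^{(1)}T_{H}B^{(1)}$ and then conjugates by $T_H$; you read the same factorisation at the $\xi_{3}^{1}$ level, $A_{H}=-\left((B^{\ast})^{(1)}T_{H}+T_{H}B^{(1)}\right)$, which is a minor variant with the small advantage of not needing $B^{(1)}$ to be invertible.

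The genuine gap is the case $\kappa=0$, which the theorem's hypotheses permit ($a=0,\dots,\kappa$ is then mere agreement at $x_{3}=0$, and $\Psi DO^{(1,0)}$ imposes no boundary vanishing) and which the paper treats by a separate argument. Your reduction to the linear system \eqref{vectorkH} fails there: when $\kappa=0$ the quadratic term $T_{H'}Y^{2}$ in the Riccati equation \eqref{YRiccati} no longer drops order, since $Y^{2}\in\Psi DO^{(2,0)}$ has principal symbol $(Y^{(1)})^{2}$, so the principal-symbol equation is the nonlinear equation $A_{H'}Y^{(1)}+T_{H'}B^{(1)}Y^{(1)}+T_{H'}Y^{(1)}B^{(1)}+T_{H'}(Y^{(1)})^{2}=0$ rather than the homogeneous Sylvester equation. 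Sylvester injectivity says nothing about this equation, and it genuinely has nonzero solutions: any two distinct right solvents of $T_{H}X^{2}+A_{H}X+Q_{H}=0$ differ by such a solution, and since the coefficients are real, $\overline{B^{(1)}}$ is a second solvent, so $Z=\overline{B^{(1)}}-B^{(1)}\neq 0$ solves it. What excludes this for $Y^{(1)}=B'^{(1)}-B^{(1)}$ is not linear algebra but the fact that both $B^{(1)}$ and $B'^{(1)}$ are the distinguished solvent with spectrum in the upper half-plane: by \eqref{Bcont}, \eqref{BJordan} and \eqref{CJordan} the principal symbols are pointwise functions of the parameter values, which coincide at $x_{3}=0$, forcing $Y^{(1)}=J^{(1)}=0$ there. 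This is exactly the paper's one-line disposal of $\kappa=0$, and your proof needs it added.
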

\begin{proof}
The case $\kappa = 0$ follows from the formula \eqref{BJordan} and \eqref{CJordan} for $B$, $B'$, $C$ and $C'$ since the principal symbols depend only on the values of the respective parameters at the boundary. For $\kappa \geq 1$, it suffices to show that the equations
		\begin{equation}\label{SylvesterZ}
\left(	A_{H}+ T_{H}B^{(1)}\right)Z + T_{H}ZB^{(1)}= 0, 
	\end{equation}
	\begin{equation}\label{SylvesterW}
	\left(	A_{E}+ T_{E}C^{(1)}\right)W + T_{E}WC^{(1)}= 0, 
	\end{equation}
	admit the unique solutions $Z=0$ and $W=0$, respectively. This follows from the fact that after proving Theorems \ref{indHthm} and \ref{indEthm} the right hand-sides of the equations in \eqref{vectorkH} and \eqref{vectorkE} vanish. We will prove the result for \eqref{SylvesterZ} and the corresponding result for \eqref{SylvesterW} follows in a similar way.
	Equation \eqref{SylvesterZ} multiplied by $T_{H}^{-1}$ becomes
	\begin{equation}\label{Sylvestereq}
	\left( T_{H}^{-1}A_{H}+ B^{(1)}\right)Z + ZB^{(1)}= 0
	\end{equation}
	This is a homogeneous Sylvester equation with unknown $Z$. According to \cite[Theorem 8.2.1]{datta2004numerical}, if $T_{H}^{-1}A_{H}+B^{(1)}$ and $-B^{(1)}$ do not share any eigenvalues equation \eqref{Sylvestereq} admits the unique solution
	\begin{equation*}
	Z=0.
	\end{equation*}
We showed that $B^{(1)}$ satisfies
	\begin{equation*}
	T_{H}(B^{(1)})^{2}+ A_{H}B^{(1)}+ Q_{H}=0.
	\end{equation*}
	Hence we can write $T_{H}^{-1}A_{H}+B^{(1)}$ as
	\begin{equation*}
	T_{H}^{-1}A_{H}+B^{(1)}= -T_{H}^{-1} Q_{H}(B^{(1)})^{-1}.
	\end{equation*}
	So we have to investigate if $T_{H}^{-1}Q_{H}(B^{(1)})^{-1}$ and $B^{(1)}$ share any eigenvalues. Revisiting the factorisation of $M_{H}(\xi_{3})$ in terms of $B^{(1)}$ as given in equation \eqref{Mfactorization} we have
	\begin{equation*}
	T_{H}\xi_{3}^{2}+ A_{H}\xi_{3}+ Q_{H}= \left(\mathbb{I}\xi_{3}-(B^{\ast})^{(1)}\right)T_{H}\left(\mathbb{I}\xi_{3} -B^{(1)}\right).
	\end{equation*}
	Equating the terms that are of order zero with respect to $\xi_{3}$, we have that $Q_{H}$ is expressed as
	\begin{equation*}
	Q_{H}= (B^{\ast})^{(1)}T_{H} B^{(1)}.
	\end{equation*}
	Hence, $T_{H}^{-1}Q_{H}(B^{(1)})^{-1}$ is written as
	\begin{equation*}
	T_{H}^{-1}Q_{H}(B^{(1)})^{-1}= T_{H}^{-1}(B^{\ast})^{(1)}T_{H}.
	\end{equation*}
	Since the eigenvalues $\xi_{\hat{\mu}3}$ and $\xi_{\hat{\epsilon}3}$ of $B^{(1)}$ have nonzero complex component, the eigenvalues $\overline{\xi_{\hat{\mu}3}}$ and $\overline{\xi_{\hat{\epsilon}3}}$ of $T_{H}^{-1}(B^{\ast})^{(1)}T_{H}$ are different, which completes the proof.
\end{proof}

\section*{Acknowledgements}

We would like to thank Bill Lionheart for suggesting the problem and providing many useful discussions.

\bibliography{sn-bibliography}

\end{document}